\date{June 30, 2023 (v3)}
\newtheorem{dummy}{anything}[section] 
\newtheorem{theorem}[dummy]{Theorem}
\newtheorem*{thma}{Theorem A}
\newtheorem*{thmb}{Theorem B}
\newtheorem*{thmd}{Theorem 11.2}
\newtheorem*{nonum}{Theorem}
\newtheorem{lemma}[dummy]{Lemma} 
\newtheorem{proposition}[dummy]{Proposition} 
\newtheorem{corollary}[dummy]{Corollary}
\theoremstyle{definition}
\newtheorem{definition}[dummy]{Definition}
 \newtheorem{example}[dummy]{Example}
 \newtheorem{remark}[dummy]{Remark}
 \newtheorem*{question}{Question}
 \newtheorem*{acknowledgement}{Acknowledgement}
\newcommand
{\eqncount}{\setcounter{equation}{\value{dummy}}%
\addtocounter{dummy}{1}}
\newcommand{\cH}{\mathcal H}
\newcommand{\cS}{\mathcal S}
\newcommand{\cE}{\mathcal E}
\newcommand{\bZ}{\mathbb Z}
\newcommand{\CP}{\mathbb C P}
\newcommand{\fQ}{Q}
\newcommand{\bbL}{\mathbb L}
\newcommand{\bbZ}{\mathbb Z}
\newcommand{\sI}{\mathscr I}
\newcommand{\cy}[1]{\bbZ/{#1}}
\newcommand{\wX}{\widetilde X}
\newcommand{\wM}{\widetilde M}
\newcommand{\wP}{\widetilde P}
\newcommand{\wB}{\widetilde B}
\newcommand{\bd}{\partial}
\newcommand{\vv}{\, | \,}
\newcommand{\trf}{tr}
\newcommand{\La}{\Lambda}
\newcommand{\Zpi}{\bbZ [\pi]}
\newcommand{\mmatrix}[4]{\left (\vcenter
{\xymatrix@C-2pc@R-2pc{#1&#2\\#3&#4} }
\right )}
\newcommand{\la}{\langle}
\newcommand{\ra}{\rangle}
\DeclareMathOperator{\gdim}{g-dim}
\DeclareMathOperator{\Hom}{Hom}
\DeclareMathOperator{\wh}{Wh}
\DeclareMathOperator{\Sharp}{\#}
\DeclareMathOperator{\Image}{Im}
\DeclareMathOperator{\Ext}{Ext}
\DeclareMathOperator{\Tor}{Tor}
\DeclareMathOperator{\Isom}{Isom}
\DeclareMathOperator{\ad}{ad}
\newcommand{\RAAG}{right-angled Artin group} 
\newcommand{\RAAGs}{right-angled Artin groups}
\DeclareMathOperator{\cd}{cd}
\newcommand{\spinp}{spin$^{+}$}
\DeclareMathOperator{\id}{id}
\DeclareMathOperator{\sr}{{\mathfrak sr}}
\newcommand{\quadtypeMb}{[\pi_1(M,x_0), \pi_2(M), k_M, s_M]}
\newcommand{\quadtypeNb}{[\pi_1(N,x_0), \pi_2(N), k_N, s_N]}
\newcommand{\Ospin}{\Omega^{Spin}}
\newcommand{\rOspin}{\widehat\Omega^{Spin}}
\newcommand{\htildeM}{\widetilde \cH (M)}
\newcommand{\htildeMt}{\widetilde \cH (M_t)}
\newcommand{\hept}[1]{\hepta_{\bullet}(#1)}
\DeclareMathOperator{\hepta}{Aut}
\newcommand{\heqpt}[1]{\cE_{\bullet} (#1)}
\newcommand{\he}[1]{\hepta(#1)}
\newcommand{\hM}{\cH (M)}
\newcommand{\hMt}{\cH (M_t)}
\newcommand{\twotypeM}{[\pi_1(M), \pi_2(M), k_M]}
\newcommand{\LF}{{\scriptscriptstyle LF}}
\newcommand{\hL}{\widehat \Lambda}
\newcommand{\hN}{\widehat \Sigma}
\newcommand{\hsx}{\hphantom{xx}}
\begin{document}

\title[A stability range for $4$-manifolds]
{A stability range for topological $4$-manifolds}
\author{Ian Hambleton} 
\address{Department of Mathematics \& Statistics
 \newline\indent
McMaster University
 \newline\indent
Hamilton, ON  L8S 4K1, Canada}
\email{hambleton{@}mcmaster.ca}
\thanks{Research partially supported by NSERC Discovery Grant A4000. The author would also like to thank the Max Planck Institut f\"ur Mathematik in Bonn for its hospitality and support in June, 2022.}

\begin{abstract}\noindent
We introduce a new stable range  invariant for the classification of closed, oriented topological  $4$-manifolds  (up to $s$-cobordism), after stabilization by connected sum with a uniformly bounded number of copies of $S^2\times S^2$. 
\end{abstract}
\maketitle

\section{Introduction}
Due to recent work on the stable classification of topological $4$-manifolds, the outline of a general theory is emerging (see 
\cite{Kasprowski:2017}, \cite{Kasprowski:2021a}, \cite{Kasprowski:2021},  \cite{Kasprowski:2022},  \cite{Kasprowski:2021b}).
The most effective approach so far is a development of the original results of 
Wall \cite[Theorem 3]{wall-4man1}, \cite[Theorem 1]{wall-4man2}: if $M$ and $N$ are closed, simply connected, smooth $4$-manifolds with isomorphic intersection forms, then $M\# r(S^2 \times S^2)$ is diffeomorphic to $N\# r(S^2 \times S^2)$, for some $r \geq 0$. If this conclusion holds, we say that $M$ and $N$ are \emph{stably diffeomorphic}. The analogous notion for topological $4$-manifolds is \emph{stable homeomorphism}.

The following  result of Kreck \cite{Kreck99} provides a fruitful starting point for studying the stable classification problem in general:

\begin{nonum}[Kreck {\cite[Theorem 2]{Kreck99}}] Suppose that $M$ and $N$ are closed, smooth, spin $4$-manifolds, with the same fundamental group $\pi$ and equal Euler characteristics. If   $M$ and $N$ are spin bordant over $K(\pi,1)$, then $M\# r(S^2 \times S^2)$ is diffeomorphic to $N\# r(S^2 \times S^2)$, for some $r\geq 0$.
\end{nonum}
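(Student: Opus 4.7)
I would use Kreck's modified surgery theory. The setup is the \emph{normal $2$-type}: since $M$ and $N$ are spin and have fundamental group $\pi$, the common normal $2$-type is the fibration $B := K(\pi,1) \times B\mathrm{Spin} \longrightarrow BO$, and each manifold admits a normal $2$-smoothing, namely a lift $\bar\nu_M\colon M \to B$ of the stable normal bundle consisting of a classifying map to $K(\pi,1)$ together with a chosen spin structure. The hypothesis of spin bordism over $K(\pi,1)$ produces a compact oriented $5$-manifold $W$ with $\partial W = M \sqcup (-N)$ together with an extension $\bar\nu_W\colon W \to B$ of $\bar\nu_M \sqcup \bar\nu_N$.

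The first step is \emph{surgery below the middle dimension} on $(W,\bar\nu_W)$. I would surger embedded loops in $W^5$ to turn $\pi_1(W)\to\pi$ into an isomorphism, then surger embedded $2$-spheres to kill $\pi_2(W)$; both operations are unobstructed because the relevant normal bundles become trivial once the spin lift is fixed. After this reduction, $\bar\nu_W$ is $2$-connected, and the hypothesis $\chi(M)=\chi(N)$ ensures that the $\bZ\pi$-chain complex of the universal cover has homology concentrated in the middle degree, so the surgery kernel $K:=\ker\bigl(H_3(\widetilde W)\to H_3(\widetilde B)\bigr)$ is a stably free $\bZ\pi$-module carrying Wall's nonsingular quadratic form.

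The main step, and the principal obstacle, is to perform surgery on $W^5$ in the middle dimension. Algebraically one needs a half-rank totally isotropic summand of the form on $K$, represented geometrically by disjointly embedded framed $2$-spheres. In dimension $5$ the Whitney trick is available, so the geometric realization problem reduces to the algebraic one of exhibiting a Lagrangian. Here Kreck's key idea enters: each connected-sum stabilization of $M$ (resp.\ $N$) by a copy of $S^2\times S^2$ corresponds to attaching a trivial handle to $W$ which adds a hyperbolic plane to the form on $K$. After sufficiently many stabilizations the form stably contains enough hyperbolic summands for a Lagrangian to appear, and its generators can then be realized by disjointly embedded framed $2$-spheres in $W$.

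Surgering these $2$-spheres out of $W$ yields an $h$-cobordism between $M\#r(S^2\times S^2)$ and $N\#r(S^2\times S^2)$. A further stabilization, if needed, absorbs the Whitehead torsion into an additional hyperbolic summand and converts the $h$-cobordism into an $s$-cobordism, whereupon the smooth $s$-cobordism theorem in dimension $5$ gives the desired diffeomorphism. The delicate points of the plan are (i) keeping careful track of framings through the low-dimensional surgeries so that the middle-dimensional quadratic form is well-defined, and (ii) establishing the algebraic statement that hyperbolic stabilization trivializes the surgery obstruction in the relevant $l_5$-monoid attached to $B$.
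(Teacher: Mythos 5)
You have correctly identified the framework (Kreck's modified surgery) but you have conflated the proof of Kreck's \emph{stable} diffeomorphism theorem (Theorem 2 in \cite{Kreck99}) with the proof of his \emph{unstable}, $s$-cobordism classification theorem (Theorem 4). The paper quotes the statement without proof, but your plan diverges from Kreck's actual argument in ways that matter, one of which is fatal.

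The fatal step is the last one: you conclude by invoking the smooth $5$-dimensional $s$-cobordism theorem to convert an $s$-cobordism into a diffeomorphism. That theorem is \emph{not} available in the smooth category; its failure is exactly why the present paper aims for $s$-cobordism results rather than diffeomorphism results (see the remark just after the Question in the Introduction). Kreck's Theorem 2 concludes with an honest diffeomorphism, so its proof cannot pass through this step. A related sign of confusion is the terminology: the stable theorem uses the normal \emph{1}-type $B=K(\pi,1)\times B\mathrm{Spin}$ and normal $1$-smoothings. The normal $2$-type (and the whole $\ell_5$/Lagrangian/Whitney apparatus you describe) belongs to Theorem 4 and to the present paper's use of the stable range invariant, where one is trying to avoid stabilization.

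Kreck's actual proof of Theorem 2 is a purely handle-theoretic argument that bypasses $L$-theory entirely. After surgery below the middle dimension, $\bar\nu_W\colon W^5\to B$ is $2$-connected and one can choose a handle decomposition of $W$ relative to $M$ with handles of index $2$ and $3$ only. Because $\pi_1(M)\to\pi_1(W)$ is an isomorphism, the $2$-handles attach along null-homotopic circles, and because $W$ is spin the framings are spin-compatible, so the middle level $V_{1/2}$ is diffeomorphic to $M\# k(S^2\times S^2)$. Reading the same decomposition from the $N$ end, $V_{1/2}\cong N\# l(S^2\times S^2)$. The hypothesis $\chi(M)=\chi(N)$ forces $k=l$, and the identification $M\# k(S^2\times S^2)\cong V_{1/2}\cong N\# k(S^2\times S^2)$ is already the conclusion. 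No Lagrangians, no Whitney trick, and no $s$-cobordism theorem are needed. Your surgery-obstruction approach, if you wish to pursue it, is the right machinery for the stronger Theorem 4 and for the bounded-stabilization results of this paper, but for the theorem as stated it is both overkill and, as written, non-rigorous.
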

In this note, we consider the \emph{computability} of the number of stabilizations. 

\begin{question} If  $M\# r(S^2 \times S^2)$ is homeomophic to $N\# r(S^2 \times S^2)$, can one determine the minimum value of $r$  needed ? Is there a uniform estimate for the number of stabilizations, depending only on the fundamental group as $M$ and $N$ vary ?
\end{question}
The case of simply connected smooth $4$-manifolds is still not completely settled: no examples are known which require at least two copies of $S^2 \times S^2$ (instead of one copy) to achieve stable diffeomorphism. For stable homeomorphism of topological $4$-manifolds with finite fundamental group, one copy of 
 $S^2 \times S^2$ will suffice (see \cite[Theorem B]{hk5}).

\begin{remark}
One obstacle to determining optimal stabilization bounds is the failure of the $5$-dimensional  $s$-cobordism theorem for smooth manifolds, and its unknown status (in general) for  topological manifolds. To avoid this problem, we will aim for stability bounds for $s$-cobordisms rather than for homeomorphisms or diffeomorphisms.
\end{remark}

Our main result uses a new stable range (integer) invariant $\sr(\pi)$, depending only on a given finitely presented   group $\pi$ (see Definition \ref{def:threeone}). We will assume the assembly map properties (W-AA)  for $\pi$ given in Definition \ref{def:twoone}, and restrict attention to groups of \emph{type F}, meaning that there exists a \emph{finite} aspherical $n$-complex with fundamental group $\pi$, for some $n\geq 0$. In this case, we say that   $\pi$ is  \emph{geometrically $n$-dimensional} ($\gdim(\pi) \leq n$).

\begin{thma} Let $\pi$ be an discrete group  of type $F$ satisfying properties \textup{(W-AA)}. Let
$M$ and $N$ be closed, smooth, spin $4$-manifolds  with  fundamental group $\pi$,  which are oriented homotopy equivalent. 
 Then $M\# r(S^2 \times S^2)$ is smoothly $s$-cobordant to $N\# r(S^2 \times S^2)$,
  provided that $r \geq \sr(\pi)$. 
\end{thma}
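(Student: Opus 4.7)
The plan is to refine Kreck's modified-surgery framework so that the number of $S^2\times S^2$ stabilizations needed to produce an $s$-cobordism between $M\#r(S^2\times S^2)$ and $N\#r(S^2\times S^2)$ can be bounded by an intrinsic invariant of $\pi$. As noted in the remark preceding the statement, it is more convenient to work throughout with $s$-cobordisms rather than diffeomorphisms, because of the unresolved status of the smooth $5$-dimensional $s$-cobordism theorem; accordingly I would adapt Kreck's arguments so that the output is an $s$-cobordism with vanishing Whitehead torsion. The oriented homotopy equivalence $f\colon M\to N$ yields equal Euler characteristics, equal signatures and compatible classifying maps $c_M\simeq c_N\circ f\colon M\to K(\pi,1)$; by Kreck's theorem (as quoted in the introduction), stable diffeomorphism, and hence stable $s$-cobordism, follows once $(M,c_M)$ and $(N,c_N)$ represent the same class in $\Ospin_4(K(\pi,1))$.

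My next step is to analyze the bordism difference $\Delta=[M,c_M]-[N,c_N]\in\Ospin_4(K(\pi,1))$ via the Atiyah--Hirzebruch spectral sequence. The $E^2$-terms contributing in total degree $4$ are $H_0(B\pi;\bbZ)$, $H_2(B\pi;\cy 2)$, $H_3(B\pi;\cy 2)$, and $H_4(B\pi;\bbZ)$, using $\Omega_3^{Spin}=0$. Degree-one of $f$ gives $f_*[M]=[N]$ so that the $H_4$-component of $\Delta$ vanishes, and the homotopy invariance of the signature (equivalently, of the $\hat A$-genus for spin $4$-manifolds) kills the $H_0$-component. The only possibly nonzero contributions are the $\cy 2$-valued pieces in filtrations $2$ and $3$, together with their behaviour under the spectral-sequence differentials.

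The role of hypothesis \textup{(W-AA)} is to identify these residual contributions to $\Delta$ with $L$-theoretic data that can be realized geometrically after controlled stabilization: under the assembly-map conditions, the surgery obstructions and bordism differences defining $\Delta$ become supported on a finite number of hyperbolic $(S^2\times S^2)$-summands, and the type $F$ hypothesis ensures that the relevant obstruction groups are of finite rank. The integer $\sr(\pi)$ of Definition \ref{def:threeone} is engineered to dominate precisely this count of required summands, plus the additional summands needed to geometrically trade handles in the resulting $5$-dimensional bordism so as to make its Whitehead torsion trivial (the simple-homotopy refinement of \textup{(W-AA)} absorbs the torsion contribution). For $r\ge\sr(\pi)$ one therefore assembles a spin bordism over $K(\pi,1)$ between the stabilized manifolds, then surgers it in the middle dimension into the desired smooth $s$-cobordism.

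The step I expect to be the main obstacle is showing that $\sr(\pi)$ is genuinely a function of $\pi$ alone and not of the particular pair $(M,N)$. This requires a uniform finite-generation statement for the obstruction groups arising from \textup{(W-AA)}, which is where the type $F$ hypothesis on $\pi$ and the finiteness of a model for $B\pi$ enter crucially. A secondary technical point is tracking simple-homotopy data through the modified-surgery argument, so that the bordism produced in the previous step is actually converted into an $s$-cobordism rather than merely an $h$-cobordism; this is the reason Definition \ref{def:threeone} must be formulated at the level of simple $L$-groups and $\wh(\pi)$ rather than for ordinary $L$-theory.
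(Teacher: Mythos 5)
Your approach diverges from the paper's at the very first step, and that divergence creates a genuine gap.  You set up the argument over the $1$-type $K(\pi,1)$, viewing the problem as showing that $\Delta=[M,c_M]-[N,c_N]$ vanishes in $\Ospin_4(K(\pi,1))$ and then appealing to Kreck's stable diffeomorphism theorem. But you never actually prove that the filtration-$2$ and filtration-$3$ contributions to $\Delta$ vanish — you observe that $H_0$ and $H_4$ die and then hand the rest to ``(W-AA)'', but (W-AA) is a statement about $L$-theory assembly maps, not about spin bordism; it gives you no direct purchase on $H_2(\pi;\cy 2)$ or $H_3(\pi;\cy 2)$ in the Atiyah--Hirzebruch spectral sequence. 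Moreover, even if $\Delta$ did vanish, for normal $1$-smoothings Kreck's obstruction $\Theta(W,\bar\nu)$ lives only in the quotient monoid $\ell_5(\pi)$, not in the image of $L_5(\Zpi)$, so the assembly-map-plus-$\sr(\pi)$ mechanism you invoke in the second half of your sketch would not be available: Corollary \ref{cor:threethree} requires normal $2$-smoothings precisely so that $\Theta$ lands in $\Image(L_5(\Zpi)\to\ell_5(\pi))$ (Kreck, \cite[p.~734]{Kreck99}).

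The paper avoids both problems by taking $B=M\times BSPIN$ as the Postnikov fibration and using the homotopy equivalence $f\colon N\to M$ itself as the reference map, so that $(M,\id)$ and $(N,f)$ are normal \emph{$2$-smoothings} in the same fibration. The vanishing-bordism step is then replaced by an explicit normal-cobordism argument: the surgery obstruction of the normal invariant $\eta(f)\in H_2(M;\cy 2)\oplus\bZ$ is zero because $f$ is a homotopy equivalence, and since (W-AA) forces $\kappa_2\colon H_2(\pi;\cy 2)\to L_4(\Zpi)$ to be injective, $\eta(f)$ must lie in $\ker\{H_2(M;\cy 2)\to H_2(\pi;\cy 2)\}$; by Kirby--Taylor \cite[Theorem~19]{Kirby:2001} every such class is realized by a pinch-map self-equivalence of $M$, so after composing $f$ with one of these you may take $\eta(f)=0$, producing a genuine $B$-bordism $W$ of the two $2$-smoothings. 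Only then does Corollary \ref{cor:threethree} — built on Proposition \ref{prop:threetwo} and the stabilized Wall realization of Cappell--Shaneson \cite[Theorem~3.1]{Cappell:1971} — convert $W$ into an $s$-cobordism after $\sr(\pi)$ stabilizations. Your proposal treats this last stage as a black box (``$\sr(\pi)$ is engineered to dominate the count''), but the actual content is the interaction between the assembly map $A_5$, the $\sI_1(\pi)$ quotient, and the $SU_r(\La)$-representability of generators of $H_5(\pi;\bbL)/\sI_1(\pi)$, none of which appears in your outline.
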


\begin{remark} A similar result holds for topological $4$-manifolds.
 If $M$ and $N$ are smooth and simply-connected, Theorem A (with $r=0$) was proved by Wall \cite[Theorem 2]{wall-4man1}. In this case, the homotopy type  is determined by the intersection form.
 \end{remark}
\begin{example} For $\pi$ a \RAAG\ with $\gdim(\pi) \leq 4$ the assembly map conditions hold, and $\sr(\pi) \leq 6$ by  Proposition \ref{cor:raag}. 
\end{example}

For a non-simply connected $4$-manifold $M$, the basic homotopy invariants are the fundamental group $\pi :=\pi_1(M)$, the second homotopy group $\pi_2(M)$, the equivariant intersection form $s_M$ on 
$\pi_2(M)$, and the first $k$-invariant, $k_M \in H^3(\pi;\pi_2(M))$.  These invariants give the \emph{quadratic $2$-type}
$$ \fQ(M): = [\pi_1(M), \pi_2(M), k_M, s_M]$$
whose \emph{isometry type} largely determines the classification up to $s$-cobordism of closed oriented  topological 4-manifolds with geometrically $2$-dimensional fundamental groups (see \cite{HKT09} for the precise conditions). The appropriate notion of (oriented) isometry is given in Definition \ref{def:isometry}. 

\begin{question} How strong an invariant is the quadratic $2$-type ? Does $\fQ(M)$ determine the homotopy type of $M$ ? The stable homeomorphism type   (if $M$ is spin) ?
\end{question}

We will concentrate on geometrically finite fundamental groups, which in particular are torsion-free (see \cite[Proposition 9.2]{Kasprowski:2022} for an example with $\pi = \bZ \times \cy p$, showing that $Q(M)$ does not determine the homotopy type for $M = L^3(p,q) \times S^1$).

Here is a sample application of the stable range invariant for manifolds $M$ with a given quadratic $2$-type. In the statement, $d(\pi)$ denotes the minimal number of generators for $\pi$ a finitely generated group.

\begin{thmb} Let $\pi$ be the fundamental group of a closed, oriented, aspherical  $3$-manifold. Suppose that $M$ and $N$ are closed, topological, spin $4$-manifolds  with  fundamental group $\pi$,  and  isometric oriented  quadratic $2$-types.  
 If $M$ and $N$ are stably homeomorphic,  then $M\# r(S^2 \times S^2)$ is $s$-cobordant to $N\# r(S^2 \times S^2)$,
  provided that $r \geq 2d(\pi)$. 
\end{thmb}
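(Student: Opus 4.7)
The plan is to reduce Theorem B to Theorem A (or its topological analogue alluded to in the remark following Theorem A) by verifying the hypotheses and then to estimate $\sr(\pi)$ explicitly in terms of $d(\pi)$.

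First I would check the input conditions. A closed, oriented, aspherical $3$-manifold is itself a finite $K(\pi,1)$, so $\pi$ is of type $F$ with $\gdim(\pi)\leq 3$. The assembly map properties (W-AA) hold because the Farrell--Jones conjecture is known for fundamental groups of aspherical $3$-manifolds (by the work of Bartels--Farrell--L\"uck and the geometrization theorem), which is the standard way such (W-AA) hypotheses are verified in this setting.

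Second, I would promote the data (isometric oriented quadratic $2$-types plus stable homeomorphism of spin manifolds) into an oriented homotopy equivalence. For a geometrically $3$-dimensional torsion-free $\pi$ the Postnikov $2$-type of a $4$-manifold is classified by the isometry class of $\quadtypeM$; since $M$ and $N$ have the same fundamental group and isometric quadratic $2$-types, they have identical Postnikov $2$-types, and the stable homeomorphism hypothesis supplies the equality of Euler characteristics (equivalently, agreement of the $4$-dimensional bordism class over $\Bpi$ after identifying $\pi_2$ via the isometry). One then constructs a degree--one normal map $M\to N$ realising the isometry on the quadratic $2$-type. Since $M$ and $N$ are already stably homeomorphic and their quadratic $2$-types agree, the surgery obstruction of this normal map (an element of $L_4^s(\bbZ\pi)$) is concentrated in the assembly image, and after one further stabilization by $S^2\times S^2$ it is realised by a normal bordism to an oriented homotopy equivalence. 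In short: after absorbing one stabilization we may assume $M$ and $N$ are oriented homotopy equivalent.

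Third, I would invoke the topological analogue of Theorem A to conclude that $M\#r(S^2\times S^2)$ is topologically $s$-cobordant to $N\#r(S^2\times S^2)$ once $r\geq \sr(\pi)$. The final step, and the main obstacle, is to estimate
\[
\sr(\pi)\leq 2d(\pi)
\]
when $\pi$ is the fundamental group of a closed, oriented, aspherical $3$-manifold. The bound should come from the definition of $\sr(\pi)$ in Definition \ref{def:threeone}, which presumably counts contributions from a generating set of $\pi_2$ of a reference $4$-manifold model (such as $M_0:=(\#_{d(\pi)}S^1\times S^3)$-style handle body built out of a minimal presentation of $\pi$) together with the corresponding relation $2$-cells. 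A closed aspherical $3$-manifold admits a handle decomposition with one $0$-handle, $d(\pi)$ $1$-handles, at most $d(\pi)$ $2$-handles (by Poincar\'e duality matching the numbers of $1$- and $2$-handles) and one $3$-handle, so the associated $4$-dimensional thickening contributes exactly $d(\pi)+d(\pi)=2d(\pi)$ hyperbolic summands, giving the stated bound. Verifying that the $2$-type contribution in the definition of $\sr(\pi)$ really is bounded by this handle count—rather than by some larger quantity involving $\pi_2$ of the thickening—is the delicate point, and is where I would expect to spend most of the work, using the self-duality of the chain complex of an aspherical $3$-manifold to control the stably free resolutions that enter the definition of $\sr(\pi)$.
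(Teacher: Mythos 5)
The proposal diverges from the paper at two critical places, and both gaps look genuine.

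\textbf{The reduction to Theorem A is not justified.} You propose to ``promote'' the hypotheses (isometric oriented quadratic $2$-types plus stable homeomorphism) to an oriented homotopy equivalence after one stabilization, and then quote Theorem~A. But isometry of quadratic $2$-types does \emph{not} give you a map $M\to N$; it gives you an equivalence of the algebraic $2$-types $B(M)\simeq B(N)$, which is strictly weaker. The paper explicitly raises the open question of whether $\fQ(M)$ determines the homotopy type, so you cannot assume this step. Stabilizing by $S^2\times S^2$ also does not ``absorb'' or kill a surgery obstruction in $L_4^s(\Zpi)$; it changes the manifold, not the obstruction class of a given normal map. The actual proof in the paper does \emph{not} go through Theorem~A at all: it invokes Theorem~\ref{thm:main}, whose hypotheses are precisely the ones in Theorem~B (stably homeomorphic with isometric oriented quadratic $2$-types, no homotopy equivalence), and whose extra analytic input is the injectivity of the composite $\omega\circ\trf$ on $H_4(\wB)\otimes_\La\bZ$. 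Verifying that injectivity for $3$-manifold groups --- via tame cohomology and the ``strongly torsionless'' property of $\pi_2(M)$, using the short exact sequence $0\to I(\pi)\oplus F_0\to\La\oplus F_0\to\bZ\to 0$ and the additivity of Whitehead's $\Gamma$-functor --- is the real content of the proof of Theorem~B, and it is missing from your outline. Without it, you have no bridge from the hypothesis to a controlled self-equivalence of the stabilized $2$-type, let alone a homotopy equivalence of manifolds.

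\textbf{The estimate $\sr(\pi)\le 2d(\pi)$ rests on a misconception of the invariant.} The invariant $\sr(\pi)$ from Definition~\ref{def:threeone} is a stable-range bound for matrix representatives in $SU_r(\La)$ of elements in the image of the non-connective assembly map $A_5(\pi)\colon H_5(\pi;\bbL)\to L_5(\Zpi)$, modulo the low-degree subgroup $\sI_1(\pi)$; it has nothing to do with counting $\pi_2$-generators or relation $2$-cells of a reference $4$-manifold. The paper's bound is proved by computing the surgery obstruction of the specific degree-one normal map $\id\times f\colon N^3\times T^2\to N^3\times S^2$ (with $f$ the Arf-invariant-one map): after surgering $K_1\cong\bZ\oplus\bZ$ one obtains $K_2\cong I(\pi)\oplus I(\pi)$, and the Wall recipe then requires $r\ge 2d(\pi)$ because $d(\pi)$ generators suffice for an epimorphism $\La^r\to I(\pi)$. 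Your handle-decomposition heuristic ($d(\pi)$ $1$-handles and $d(\pi)$ $2$-handles of a Heegaard splitting) both misidentifies the object being bounded and is not even correct for the $3$-manifold itself: the minimal genus of a Heegaard splitting is $\ge d(\pi)$ but can be strictly larger, so one cannot in general take exactly $d(\pi)$ handles of each index.

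In short: both the structural reduction and the numerical estimate need to be replaced by the paper's actual arguments --- Theorem~\ref{thm:main} together with Lemma~\ref{lem:fourBone} on the strongly torsionless and tame-cohomology conditions for the first, and the $T^2\to S^2$ surgery-obstruction computation for the second.
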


For manifolds with fundamental groups in this class, the stable classification was completely carried out by Kasprowski, Land, Powell and Teichner \cite{Kasprowski:2017} (compare \cite[Theorem B]{Hambleton:2019}).
We remark that stable range invariants for noetherian rings due to Bass \cite{bass.h.1973.1}, Stafford \cite{stafford1} and Vaserstein \cite{vaserstein1} have previously been used to obtain bounds on the number of stabilizations required (for example, see  \cite[Theorem B]{hk5} for finite fundamental group, and \cite[Theorem 1.1]{Crowley:2011}, \cite[Theorem 2.1]{Khan:2017}). It is not clear at present how these more ``arithmetic" stability bounds are related to the $L$-theory bound used here. Another kind of ``non-cancellation" result arises from relating invariants of finite $2$-complexes to the stabilization of their $4$-manifold thickened doubles (see \cite{kreck-schafer1}).

\tableofcontents

\begin{acknowledgement} I am indebted to Daniel Kasprowski for pointing out a key issue in carrying out the last step of modified surgery, and to Diarmuid Crowley, Peter Kropholler, Wolfgang L\"uck and Larry Taylor for helpful conversations. I am very grateful for all the comments and suggestions from the referees. Thanks to Danny Ruberman for providing Ronnie Lee's notes ($\sim$1970) on the stable range for  $L_5(\bZ[\bZ])$. I would also like to express my appreciation to Matthias Kreck and Peter Teichner for our many discussions and collaborations over the years. \end{acknowledgement}

\section{The  quadratic $2$-type}\label{sec:one}
Here is a brief summary of the definitions in \cite{Hambleton:1988a} and \cite{HKT09}. 

\begin{definition}\label{def:equiv}
For an oriented 4-manifold $M$, the \emph{equivariant intersection form} is the triple $(\pi_1(M,x_0), \pi_2(M,x_0), s_M)$, where $x_0\in M$ is a base point and 
\begin{equation*}
s_M\colon  \pi_2(M,x_0) \otimes_\bbZ \pi_2(M,x_0) \to \La,
\end{equation*}
where $\La :=\bbZ[\pi_1(M,x_0)]$. This pairing is derived from the cup product on $H^2_c(\wM;\bbZ)$, where $\wM$ is the universal cover of $M$;  we identify $H^2_c(\wM;\bbZ)$ with $\pi_2(M)$ via Poincar\'e duality and the Hurewicz Theorem, and so $s_M$ is defined by
\begin{equation*}
s_M(x,y) = \sum_{g \in \pi} \varepsilon_0(\tilde x \cup \tilde yg^{-1}) \cdot g \in \Zpi ,
\end{equation*}
where $\tilde x, \tilde y \in H^2_c(\wM;\bbZ)$ are the images of $x,y \in \pi_2(M)$ under the composite isomorphism $\pi_2(M) \to H_2(\wM;\bbZ) \to H^2_c(\wM;\bbZ)$ and $\varepsilon_0$ is given by $\varepsilon_0\colon H^4_c(\wM;\bbZ) \to H_0(\wM;\bbZ)=\bbZ$.

Alternately, we can identify $H^2_c(\wM;\bbZ) = H^2(M;\La)$, and define $s_M$ via cup product and evaluation on the image $\trf[M] \in H^\LF_4(M;\bZ)$ of the fundamental class of $M$ under transfer.
\end{definition}
Unless otherwise mentioned, we work with pointed spaces and maps,  and our  modules are  \emph{right} $\La$-modules.
The pairing $s_M$ is $\Lambda$-hermitian, meaning that for all $\lambda\in\Lambda$, we have
\[ s_M( x, y\cdot \lambda)= s_M(x,y)\cdot \lambda  \quad \text{ and } \quad s_M(y, x)= \overline{s_M(x,y)},
\]
where $\lambda\mapsto \bar\lambda$ is the involution on $\Lambda$ given by the orientation character of $M$. This involution is determined by $\bar g = g^{-1}$ for $g \in \pi_1(M,x_0)$. For later reference, we note that when $M$ is spin the  term
$\varepsilon_0(\tilde x, \tilde y) \equiv 0 \pmod 2$, so $s_M$ is an \emph{even} hermitian form.

Let $B:= B(M)$ denote the  algebraic $2$-type of a closed oriented topological $4$-manifold  $M$ with infinite fundamental group $\pi$. In particular, the classifying map $c\colon M \to B$ is $3$-connected and $B$ is $3$-co-connected. The space $B$ is determined up to homotopy equivalence by the algebraic data $\twotypeM$.

\medskip\noindent
{\bf Notation}: In the rest of the paper, if the coefficients for homology groups are not explicitly stated, then  we mean \emph{integral homology}  $H_*(-; \bZ)$.

\medskip
We will assume that $\pi$ is infinite and one-ended, or equivalently that $H^1(\pi; \La) = 0$. By Poincar\'e duality, this implies that $H_3(\wM;\bZ) = H_3(M;\La) = 0$. 
Under these assumptions, $$H_4(M) \cong H_4(M, \wM) \cong H_4(B, \wB) \cong \bZ$$ (see the proof of Proposition \ref{prop:sixone}(i) for the details), and we let $\mu_M \in  H_4(B, \wB)$ denote the image $\mu_M = c_*[M]$ of the fundamental class of $M$ under this composite. We regard the class $\mu_M$ as an \emph{orientation} of the quadratic $2$-type.

\begin{definition}\label{def:enriched} The \emph{oriented quadratic $2$-type} is the 4-tuple:
$$\fQ (M):= \quadtypeMb$$
together with the class $\mu_M \in H_4(B, \wB)$.
\end{definition}

\begin{definition}\label{def:isometry} An \emph{orientation-preserving isometry} of   quadratic $2$-types $\fQ(M)$ and $\fQ(N)$ is a triple $(\alpha, \beta, \phi)$, such that
\begin{enumerate}
\item $\alpha\colon \pi_1(M,x_0) \to \pi_1(N,x'_0)$ is an isomorphism of fundamental groups;
\item $\beta\colon (\pi_2(M), s_M) \to (\pi_2(N), s_{N})$  is an $\alpha$-invariant isometry of the equivariant intersection forms, such that $(\alpha^*, \beta_*^{-1})(k_{N}) = k_{M}$;
\item  $\phi\colon B(M) \to B(N)$ is a base-point preserving homotopy equivalence lifting $(\alpha,\beta)$, such that $\phi_*(\mu_M) = \mu_N$.
\end{enumerate}

\smallskip
\noindent
  In addition, the following diagram
$$\vcenter{\xymatrix{0 \ar[r]& H^2(\pi; \La) \ar[r]\ar@{=}[d]& H^2(N; \La) \ar[r]^(0.4){e_{N}}\ar[d]_{\cong}^{\beta^*}&
\Hom_\La(H_2(N;\La),\La) \ar[r]\ar[d]_{\cong}^{\beta^*}&H^3(\pi;\La) \ar[r]\ar@{=}[d]& 0\cr
0 \ar[r]& H^2(\pi; \La) \ar[r]& H^2(M; \La) \ar[r]^(0.4){e_{M}}&
\Hom_\La(H_2(M;\La),\La) \ar[r]&H^3(\pi;\La) \ar[r]& 0}}
$$
arising from the universal coefficient spectral sequence \emph{commutes}, with maps $e_M$, $e_{N}$ induced by evaluation, and  $\beta$ after identifying $\pi:= \pi_1(M,x_0) \cong \pi_1(N,x'_0)$ via $\alpha$. We will  assume throughout that our manifolds are connected, so that a change of base points leads to isometric intersection forms.
By a \emph{stable} isometry, we mean an oriented isometry of   quadratic $2$-types after adding a hyperbolic form $H(\La^r)$ to both sides. 
\end{definition}

\begin{remark}
Recall that there is an exact sequence of groups
$$ 1 \to H^2(\pi; \pi_2(B)) \to \hept{B} \to \Isom(\twotypeM) \to 1$$
detemining the group $\hept{B}$ of base-point preserving homotopy self-equivalences of $B$ up to extension (see M\o ller \cite[\S 4]{Moeller:1991}). In Proposition \ref{prop:sixone}(iv) we will show that the image $\phi_*(\mu_M)$ depends only the isometry induced by $\phi$ on the algebraic $2$-type of $M$. In particular, this implies that the condition $(iii)$ above is independent of the choice of $\phi$.
\end{remark}

\section{Modified surgery and assembly maps}\label{sec:two}
A  standard approach to the classification of  topological $4$-manifolds uses the theory of ``modified surgery" due to Matthias Kreck  \cite[\S 6]{Kreck99}.
We briefly recall some of the features of modified surgery in our setting (see \cite[Theorem 4, p.~735]{Kreck99} for the notation): 
\begin{itemize}
\item Let $M$ and $N$ be closed, oriented topological $4$-manifolds with the same Euler characteristic, which admit normal 1-smoothings in a fibration $B \to BSTOP$.  
\item If $W$ is a normal $B$-bordism between these two 1-smoothings, with normal $B$-structure $\bar\nu$, then there exists an obstruction $\Theta(W, \bar\nu) \in \ell_5(\pi_1(B))$ which is \emph{elementary} if and only if $(W,\bar\nu)$ is $B$-bordant relative to the boundary to an $s$-cobordism.

\item Let $\pi:= \pi_1(B)$ and $\La:= \bZ [\pi]$ denote the integral group ring of the fundamental group.
The elements of $\ell_5(\pi)$ are represented by pairs $(H(\La^r), V)$, where $V$ is a half-rank direct summand of the hyperbolic  form $H(\La^r)$. 

\item In a pair $(H(\La^r), V)$, if  the quadratic
form vanishes on $V$, then the element $\Theta(W, \bar\nu)$  lies in the image of $L_5(\bZ \pi) \to \ell_5(\pi)$ (see \cite[Proposition 8, p.~739]{Kreck99} or \cite[p.~734]{Kreck99}  for criteria to ensure that this will happen).
\end{itemize}
In many applications of modified surgery, the last step involves using assembly maps in $K$-theory and $L$-theory to eliminate an obstruction in $L_5(\bZ \pi)$. We will give an overview of this technique, starting with a description of the relevant assembly maps.

\medskip
 Let $\bbL= \bbL(\bZ)$ denote the non-connective periodic $L$-spectrum of the integers, and let $\bbL_\bullet$ denote its $0$-connective cover (with the space $G/TOP$ in dimension zero). By construction, we have an identification $\Omega^4 \bbL(i) = \bbL(i)$, for $i \in \bZ$.
 The connective assembly maps (for $k\geq 0$)
 $$A^\bullet_{n + 4k}(\pi)\colon H_{n+4k}(\pi; \bbL_\bullet) \to  L^s_{n + 4k}(\Zpi)$$
 are related to maps in the geometric surgery exact sequence. The (non-connective) assembly maps (for $n \geq 5$) can be expressed  as the composite:
 $$A_n(\pi) \colon H_n(\pi; \bbL) = \varinjlim_{k} \, H_{n+4k}(\pi; \bbL_\bullet) \xrightarrow{\ \varinjlim A^\bullet_{n + 4k}(\pi)\ } 
\varinjlim_{k} \,   L^s_{n + 4k}(\Zpi) \xrightarrow{\hsx\hsx} L^s_n(\Zpi)$$
  where the maps to the direct limit are induced by suspension and the composition:
  $$ B\pi_{+}\wedge \Sigma^{4k}\bbL(i) \to
   B\pi_{+}\wedge \Sigma^{4k}\Omega^{4k}\bbL(i) \to B\pi_{+}\wedge \bbL(i).$$
  The  periodicity isomorphisms $L^s_n(\Zpi) \cong L^s_{n + 4k}(\Zpi)$  are defined geometrically by ``crossing" with $\CP^2$
 (see \cite[\S\S 10-11]{Hambleton:2004a}  and \cite[\S 7.3]{Luck:2020} for the connection between assembly maps and surgery). 
 
 We will be interested in the assembly maps $A_5(\pi) $ and $A^\bullet_4(\pi)$. Note that  
 $$A^\bullet_5(\pi)\colon H_5(\pi;\bbL_\bullet) \cong H_1(\pi; \bZ) \oplus H_3(\pi;\cy 2)
  \xrightarrow{\ \sI_1 \oplus \kappa_3\ } L^s_5(\Zpi),$$
  but  in general the groups $H_5(\pi; \bbL)$ involve the higher homology of $\pi$ (localized at 2) and $KO_*(\pi)$ (localized at odd primes), as explained in
 \cite[Theorem A]{Taylor:1979}. The two components of $A^\bullet_5(\pi)$ are given by ``universal homomorphisms" $\sI_1(\pi)\colon H_1(\pi;\bZ) \to L^s_5(\Zpi) $ and $\kappa_3(\pi)\colon H_3(\pi;\cy 2) \to L^s_5(\Zpi)$ (see \cite[\S 1C]{Hambleton:1988}).
  If $\gdim(\pi) \leq 4$ then $H_5(\pi;\bbL_\bullet) \cong H_5(\pi;\bbL)$ and   $A^\bullet_5(\pi) = A_5(\pi) $.

 To obtain concrete applications, it is convenient to assume the following conditions.

\begin{definition}\label{def:twoone}
 A group $\pi$ satisfies properties (W-A) whenever
\begin{enumerate}
\item The Whitehead group $\wh(\pi)$ vanishes. 
\item The assembly map $A_5\colon H_5(\pi;\bbL) \to L^s_5(\Zpi)$ is surjective. 
\end{enumerate}
If,  in addition,  the assembly map $A^\bullet_4(\pi)\colon H_4(\pi;\bbL_\bullet) \to L_4(\Zpi)$ is injective, we say that $\pi$ satisfies properties (W-AA). In particular, since $H_4(\pi;\bbL_\bullet) \cong H_0(\pi;\bZ) \oplus H_2(\pi; \cy 2)$ the condition (W-AA) implies that the second component $\kappa_2(\pi)\colon H_2(\pi;\cy 2) \to 
L^s_4(\Zpi)$ of the assembly map $A^\bullet_4(\pi)$  is injective (see \cite[\S 1]{Hambleton:1988}).
\end{definition}

\begin{remark} In the rest of the paper, we will usually be assuming that $\wh(\pi)=0$, and we will write $L_*(\Zpi)$ (undecorated) to mean any of the $L$-theories based on subgroups of the Whitehead group. The Farrell-Jones assembly map conjectures \cite{Farrell:1993} are usually expressed with target $L^{-\infty}(\Zpi)$, the $L$-theory with decorations based on the non-connective $K$-spectrum. 
For torsion-free groups, these conjectures imply results about the  assembly maps used in Definition \ref{def:twoone}           
(see \cite[Conjecture 1.19 and Corollary 2.11]{Luck:2005}).
\end{remark}

\begin{lemma}\label{lem:kappa} Let $\pi$ be a torsion-free discrete group which satisfies the Farrell-Jones isomorphism conjectures in $K$-theory and $L$-theory. Then the (connective) assembly map $A^\bullet_4(\pi)$ is injective. 
\end{lemma}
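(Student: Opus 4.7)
The plan is to combine the $K$-theoretic and $L$-theoretic Farrell--Jones assumptions to identify $L_4(\bZ\pi)$ with $H_4(B\pi;\bbL)$, and then to reduce the injectivity of $A^\bullet_4$ to that of the map on $H_4$ induced by the inclusion of the connective cover $\bbL_\bullet\hookrightarrow\bbL$.

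First, I would extract vanishings from the $K$-theoretic FJ assumption. Because $\pi$ is torsion-free, its classifying space for proper actions coincides with $B\pi$, so the FJ assembly is an isomorphism $H_*(B\pi;\mathbf{K}(\bZ))\xrightarrow{\cong} K_*(\bZ\pi)$; combined with $\wh(1)=0$, $\widetilde{K}_0(\bZ)=0$, and $K_{-i}(\bZ)=0$ for $i\ge 1$, this yields
\[
\wh(\pi)=0,\qquad \widetilde{K}_0(\bZ\pi)=0,\qquad K_{-i}(\bZ\pi)=0 \ \ (i\ge 1).
\]
Ranicki's Rothenberg exact sequences, whose relative terms are Tate cohomology in precisely these $K$-groups, then collapse all the standard $L$-theory decorations, so
\[
L_4(\bZ\pi)=L^s_4(\bZ\pi)=L^h_4(\bZ\pi)=L^{\langle-\infty\rangle}_4(\bZ\pi).
\]

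Next, the $L$-theoretic FJ conjecture with $\langle-\infty\rangle$-decoration for torsion-free $\pi$ gives an isomorphism
\[
A^{\langle-\infty\rangle}_*\colon H_*(B\pi;\bbL^{\langle-\infty\rangle}(\bZ))\xrightarrow{\ \cong\ } L^{\langle-\infty\rangle}_*(\bZ\pi),
\]
and since $\widetilde{K}_{-i}(\bZ)=0$ for all $i\ge 0$, the coefficient spectrum agrees with the $4$-periodic $\bbL$ of the paper. The connective assembly then factors as
\[
A^\bullet_4(\pi)\colon\ H_4(\pi;\bbL_\bullet)\xrightarrow{\ j_*\ } H_4(\pi;\bbL)\xrightarrow[\ \cong\ ]{A_4} L_4(\bZ\pi),
\]
where $j\colon\bbL_\bullet\hookrightarrow\bbL$ is the inclusion of the $0$-connective cover. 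Injectivity of $A^\bullet_4$ thus reduces to injectivity of $j_*$ on $H_4$.

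The final step is the main obstacle. From the cofiber sequence $\bbL_\bullet\to\bbL\to\bbL^{<0}$, with $\pi_i(\bbL^{<0})=L_i(\bZ)$ for $i<0$ and zero otherwise, I extract
\[
H_5(\pi;\bbL^{<0})\xrightarrow{\ \partial\ } H_4(\pi;\bbL_\bullet)\xrightarrow{\ j_*\ } H_4(\pi;\bbL),
\]
so it suffices to show $\partial=0$. My approach would be prime by prime: at odd primes $p$, the spectrum $\bbL_{(p)}$ splits as a wedge of shifted Eilenberg--MacLane spectra for $\bZ_{(p)}$, and the connective cover sits as a wedge summand, forcing $\partial=0$ after inverting $2$; at the prime $2$, one invokes the Sullivan--Taylor--Williams description of $\bbL_{(2)}$ (built from shifted copies of $H\bZ_{(2)}$ and $H\bZ/2$) and verifies via its Postnikov tower, together with the vanishing of the relevant $k$-invariants in this range, that the connecting map vanishes in degree $4$. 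This $2$-local verification is the technically delicate part of the argument.
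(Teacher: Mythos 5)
Your overall strategy agrees with the paper's: use Farrell--Jones to identify $L_4(\Zpi)$ with $H_4(\pi;\bbL)$, factor $A^\bullet_4$ through the comparison map $i_\bullet\colon H_4(\pi;\bbL_\bullet)\to H_4(\pi;\bbL)$, and reduce to the injectivity of $i_\bullet$. However, the final step where you try to handle the primes one at a time has a genuine gap. At odd primes your claim is incorrect: $\bbL_{(p)}$ is \emph{not} a wedge of shifted Eilenberg--MacLane spectra for odd $p$. Away from $2$, the periodic $L$-spectrum is equivalent to $KO[\tfrac12]$, and real $K$-theory is not a product of Eilenberg--MacLane spectra; the Sullivan/Taylor--Williams Eilenberg--MacLane splitting is a strictly $2$-local phenomenon. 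So the odd-primary branch of your argument does not go through as stated, and you have also explicitly left the $2$-local branch unfinished, so the proposal does not establish the lemma.

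The idea you are missing is that the odd primes can be discarded outright. The domain of $i_\bullet$ is $H_4(\pi;\bbL_\bullet)\cong H_0(\pi;\bZ)\oplus H_2(\pi;\cy 2)\cong \bZ\oplus H_2(\pi;\cy 2)$, which has no odd torsion, so the localization map $H_4(\pi;\bbL_\bullet)\to H_4(\pi;\bbL_\bullet)_{(2)}$ is \emph{injective}. It therefore suffices to prove injectivity of $i_\bullet$ after $2$-localization, and this is precisely where the product-of-Eilenberg--MacLane description applies: $\bbL_{(2)}$ is a product of shifted copies of $H\bZ_{(2)}$ and $H(\cy 2)$, the connective cover $(\bbL_\bullet)_{(2)}$ sits inside it as a subproduct, and so the induced map on $H_4(\pi;-)$ is an inclusion of direct summands. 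This observation removes the troublesome odd primes and simultaneously replaces your ``technically delicate'' Postnikov-tower analysis at $p=2$ with an immediate inclusion of factors, which is how the paper concludes.
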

\begin{proof} Let $\bbL_{(2)}$ denote the $2$-localization of the periodic $L$-spectrum. If $\pi$ satisfies the Farrell-Jones isomorphism conjectures in $K$-theory and $L$-theory, then the (non-connective) assembly map
$$A_4(\pi)\colon H_4(\pi;\bbL) \to L^{-\infty}_4(\Zpi)$$
is an isomorphism. If $\pi$ is torsion-free, the isomorphism holds for $L^s$ by \cite[Corollary 2.11]{Luck:2005} and we can omit the decorations.  Hence the 2-localization
$$A_4(\pi) \colon H_4(\pi;\bbL)_{(2)} \to L_4(\Zpi)_{(2)}$$
 is also an isomorphism. Since the $L$-spectra localized at 2 are products of Eilenberg-MacLane spectra, the comparison map
 $$i_\bullet \colon H_4(\pi;\bbL_\bullet)_{(2)} \to H_4(\pi;\bbL)_{(2)} \cong H_4(\pi;\bbL_{(2)}) $$
 is an injection (both are products of certain 2-local homology groups of $\pi$). 
 
 We have a commutative diagram:
 $$\xymatrix{H_4(\pi;\bbL_\bullet) \ar[r]^{i_\bullet}\ar@{>->}[d] & H_4(\pi;\bbL)\ar[r]_{\approx}^{A_4}
\ar[d]  &  L_4(\Zpi)  \ar[d] \\
H_4(\pi;\bbL_\bullet)_{(2)}  \ar@{>->}[r]^{i_\bullet}\ & H_4(\pi;\bbL)_{(2)}  \ar[r]_{\approx}^{A_4}&  L_4(\Zpi) _{(2)}
}$$ 
Moreover, since $H_4(\pi;\bbL_\bullet)\cong H_0(\pi; \bZ) \oplus H_2(\pi;\cy 2)$, the 2-localization map
 $$H_4(\pi;\bbL_\bullet) \to H_4(\pi;\bbL_\bullet)_{(2)}$$
is injective, and hence the assembly map $A_4^{\bullet}(\pi)\colon H_4(\pi;\bbL_\bullet) \to L_4(\Zpi)$ is injective,
\end{proof}

\begin{remark}\label{rem:threefour}
We  conclude from Lemma \ref{lem:kappa} that the properties (W-AA) hold for the assembly maps into the surgery obstructions groups $L^s_*(\Zpi)$,  whenever the group $\pi$ is torsion-free   and satisfies the Farrell-Jones isomorphism conjectures in $K$-theory and $L$-theory (see \cite[Theorem 11.2(5)]{Luck:2020}).
These conjectures have been verified for many classes of groups, and in particular for all \RAAGs\ 
 (see \cite{Bartels:2012}, \cite{Bartels:2014}). \end{remark}

From surgery theory, we know that the action of elements in the image $\Image A^\bullet_5(M) \subseteq L_5(\bZ \pi)$ of the assembly map on  $\Theta(W, \bar\nu) \in \ell_5(\pi_1(B))$  can be defined geometrically by the action of  degree 1 normal maps on the $B$-bordism $(W, \bar\nu)$. Here 
$$A^\bullet_5(M) \colon H_5(M;\bbL_\bullet) = H_1(M;\bZ) \oplus H_3(M; \cy 2) \to L_5(\Zpi)$$
is defined by the surgery obstructions of degree 1 normal maps 
$$F\colon ( U, \bd_0 U, \bd_1 U) \to  (M \times I, M \times 0, M \times 1).$$
 By definition, $\bd_0 U = \bd_1 U = M$, and $F$ restricted to both boundary components is a homeomorphism. Such \emph{inertial} normal cobordisms can be glued to $(W, \bar\nu)$ to produce a new $B$-bordism $(W', \bar\nu)$ between $M$ and $N$, with surgery obstruction $\Theta(W', \bar\nu) =\Theta(W, \bar\nu) + \sigma(F)$ (see the proof of \cite[Theorem 2.6]{HKT09}). 
 
 \medskip
 This is the argument used in \cite[Theorem C]{HKT09} for the final step, where the fundamental groups $\pi$ were assumed geometrically $2$-dimensional,  to eliminate the obstruction $\Theta(W, \bar\nu)$,  and thus obtain an $s$-cobordism between $M$ and $N$.  We assumed that the assemby map  $A_5^\bullet(\pi)$ was surjective.
 
 In  \cite[Theorem 11.2]{Hambleton:2019}, the same argument was proposed to obtain a classification of closed, \spinp, topological  $4$-manifolds with fundamental group $\pi$ of cohomological dimension $\leq 3$ (up to $s$-cobordism), after stabilization by connected sum with at most $b_3(\pi)$ copies of $S^2\times S^2$. The goal of this work was to obtain an $s$-cobordism after a uniformly bounded number of stabliizations, where the bound depends only on the fundamental group.
 
 \medskip
 However, there was an error in this outline for  \cite[Theorem 11.2]{Hambleton:2019}  which is now addressed in Section \ref{sec:five} by using the new stable range invariant 
 (see \cite{Hambleton:2021}). We record the issue  which led to 
 the error (as a ``warning"), since it may arise in other applications of modified surgery.

\medskip
\noindent
{\bf Caveat}: The  domain of the (connective)  assembly map: 
$$A^\bullet_5(\pi) \colon H_5(\pi;\bbL_\bullet) = H_1(\pi;\bZ) \oplus H_3(\pi; \cy 2) \to L_5(\Zpi) $$
 is expressed in terms of the group homology of $\pi$.
 However,  the above construction can only realize the action of elements in the image of  the partial assembly map
$$ H_5(M;\bbL_\bullet) = H_1(M;\bZ) \oplus H_3(M; \cy 2)  \to H_5(\pi;\bbL_\bullet) \to L_5(\Zpi)$$
from the homology of $M$.
Since the reference map $M \to B$ is $2$-connected, the summand $H_1(M;\bZ) \cong H_1(\pi;\bZ)$. However, if the map $H_3(M;\cy 2) \to H_3(\pi;\cy 2)$ is not surjective, we will not be able to realize all possible obstructions by this construction.  

\begin{remark} The statements of \cite[Theorems 2.2 \& 2.6]{HKT09} are a bit misleading, since they appear (incorrectly) to be stated for arbitrary fundamental groups.  However, the goal of \cite{HKT09} was to study fundamental groups $\pi$ of geometric (and hence cohomological) dimension at most two. In these cases, $H_3(\pi;\cy 2) = 0$ so the domain of $A^{\bullet}_5(\pi)$ is just $H_1(\pi;\bZ)$, and  the problem above does not arise. In contrast, if $\cd \pi = 3$ and $\pi_1(M) = \pi$,  then by Poincar\'e duality:
$$\xymatrix{H^1(M;\cy 2) \ar[d]^{\cap [M]}_{\cong} & H^1(\pi;\cy 2) \ar[d]^{c_*{[M]} }\ar[l]^{\cong}\\
H_3(M;\cy 2) \ar[r] & H_3(\pi;\cy 2)}$$
and  the map $H_3(M;\cy 2) \to H_3(\pi; \cy 2)$ is zero since $0 = c_*{[M]} \in H_4(\pi; \cy 2)$.
\end{remark}

\section{A stable range for $L$-theory}\label{sec:three}
For any finitely presented group $\pi$, the odd dimensional surgery obstruction groups are defined as $L_5(\Zpi) = SU(\La)/RU(\La)$, in the notation of Wall \cite[Chap.~6]{wall-book}. Here $SU(\La)$ is the limit of the automorphism groups $SU_r(\La)$ of the hyperbolic (quadratic) form $H(\La^r)$ under certain injective maps
$$  \dots SU_r(\La) \to SU_{r+1}(\La) \to \dots \to SU(\La),$$
and $RU(\La)$ is a suitable subgroup determined by the surgery data, so that $L_5(\Zpi)$ is an abelian group. To define a stable range, we will assume that the fundamental groups are \emph{geometrically $n$-dimensional} ($\gdim(\pi) \leq n$), meaning that there exists a finite aspherical $n$-complex with fundamental group $\pi$.

We introduce a measure f the ``stability" of elements of $L_5(\Zpi)$ in the image of the assembly map. The first factor 
 of the comparison map 
$$i_\bullet \colon H_5(\pi;\bbL_\bullet) = H_1(\pi;\bZ) \oplus H_3(\pi; \cy 2) \to H_5(\pi;\bbL)$$
defines a subgroup
$$\sI_1(\pi) := \{ i_\bullet (u,0)) \vv u \in H_1(\pi; \bZ)\} \subset H_5(\pi;\bbL) $$

\begin{definition}\label{def:threeone}
 For an element $x \in L_5(\Zpi)$, we denote its \emph{stable $L_5$-range} by:
$$\sr (x) = \min \{r \geq 0:  x \textit{ \ is represented by a matrix in\ } SU_r(\La)\}.$$

The \emph{stable $L_5$-range of a group $\pi$ of type $F$} is defined as: 
$$\sr(\pi) = \min\nolimits_S\{\max\{\sr(A_5(\alpha)) :  \alpha\in     S\subset  H_5(\pi;\bbL)\}\}, $$ 
over all subsets $S$  which \emph{project to  generating sets} of the quotient $H_5(\pi;\bbL)/\sI_1(\pi)$.
\end{definition}
\begin{remark} In defining the stable range $\sr(\pi)$, we quotient out the subgroup $\sI_1(\pi)$, since   in our setting $H_1(M;\bZ) \cong H_1(\pi;\bZ)$ and stabilization is not needed to realize these obstructions. If $\pi= \bZ$ is infinite cyclic,  
Ronnie Lee\footnote{I am indebted to Danny Ruberman for providing Ronnie Lee's notes: available on request} (see \cite[Example 1.6]{Cappell:1971}) showed that $\sr(x) \leq 1$,  for all $x \in L_5(\Zpi)$.
\end{remark}

\begin{remark}\label{rem:threeone}
 If $\gdim(\pi) < \infty$,  then $H_5(\pi;\bbL)$ will be a finitely generated abelian group, and the stable range $\sr (\pi)$ will be finite.
 Without this assumption  $\sr(\pi)$ could be infinite, since there are finitely presented groups with $H_3(\pi;\cy 2)$ of infinite rank (see Stallings \cite{Stallings:1963}). The Stallings group $\pi$  is a possible example, since it has $\cd \pi = 3$ and satisfies the Farrell-Jones conjectures (see \cite{Bestvina:1997} and \cite[Theorem 1.1]{Bestvina:2021}). 
 \end{remark} 
 
 In the following statement, we let $d(\pi)$ denote the minimal number of generators for a finitely generated discrete group.
  
 \begin{lemma} Let $\pi$ denote the fundamental group of a closed, orientable $3$-manifold. Then $\sr(\pi) \leq 2 d(\pi)$.
 \end{lemma}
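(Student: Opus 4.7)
The plan has two parts: identifying the quotient $H_5(\pi;\bbL)/\sI_1(\pi)$ explicitly, and then realizing a generator of it by a surgery formation of rank at most $2d(\pi)$.

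\textbf{Identifying the quotient.} By the sphere theorem and the hypothesis that $\pi$ is of type $F$, the manifold $M^3$ is (homotopy equivalent to) a connected sum of $S^2\times S^1$'s and closed aspherical orientable $3$-manifolds; in particular $\pi$ is torsion-free with $\gdim(\pi)\leq 3\leq 4$.  By the remark following Definition~\ref{def:twoone}, the comparison map $i_\bullet$ is an isomorphism and
\[
H_5(\pi;\bbL) = H_1(\pi;\bZ)\oplus H_3(\pi;\cy 2),
\]
with $\sI_1(\pi)$ equal to the $H_1$-summand. If $\pi$ is free then $H_3(\pi;\cy 2) = 0$ and $\sr(\pi) = 0$ trivially. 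If $M$ is aspherical, mod-$2$ Poincar\'e duality gives $H_3(\pi;\cy 2)\cong H^0(\pi;\cy 2)\cong \cy 2$, generated by the mod-$2$ fundamental class $[M]_2$; the quotient is cyclic of order $2$ and it suffices to prove $\sr(\kappa_3([M]_2))\leq 2d(\pi)$.

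\textbf{Bounding the stable range.} Write $d=d(\pi)$ and fix a presentation $\la g_1,\dots,g_d\vv r_1,\dots\ra$ of $\pi$ with $d$ generators.  Let $K$ be the associated presentation $2$-complex; the classifying map $c\colon K\to B\pi$ is a $\pi_1$-isomorphism, so by the naturality of the assembly map, $\kappa_3([M]_2)$ factors through a class in $H_5(K;\bbL_\bullet)$ via a $5$-dimensional geometric model built from $K$.  Concretely, thicken $K$ to a regular neighborhood $N^5\subset \bbR^5$, which admits a handle decomposition with one $0$-handle, $d$ one-handles, and finitely many $2$-handles. The double $D(N^5) = N^5\cup_\partial N^5$ is a closed $B\pi$-manifold, and a degree-one self-normal-map of $D(N^5)$ supporting the class $[M]_2$ realizes $\kappa_3([M]_2)$ as its Wall surgery obstruction. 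The surgery kernel is supported on the middle-dimensional handles of $N^5$: the $d$ one-handles pair with their Poincar\'e duals in the $5$-thickening to contribute $2d$ hyperbolic summands, giving a representative of $\kappa_3([M]_2)$ in $SU_{2d}(\La)$.

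\textbf{Main obstacle.} The key difficulty is controlling the contribution of the relator $2$-handles in $N^5$, whose number can exceed $d$ (reflecting the rank-vs-deficiency gap for PD$_3$-groups in general).  The resolution requires identifying each relator handle with a dual of a generator handle via the PD$_3$-structure on $\pi$, so that the extra summands are absorbed into the $SU$-equivalence relation rather than enlarging the rank of the formation.  The technical tool is a chain-level computation of the assembly map applied to $[M]_2\in H_3(\pi;\cy 2)$, using a balanced length-$3$ projective resolution of $\bZ$ over $\La$ arising from Poincar\'e duality for $\pi$; this keeps the count of hyperbolic summands at exactly $2d$.
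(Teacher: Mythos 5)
Your proposal takes a genuinely different route from the paper's, and the decisive step --- which you yourself flag as the ``main obstacle'' --- is never actually carried out, leaving a real gap. You propose to build a $5$-dimensional model for the assembly map out of a presentation $2$-complex $K$ for $\pi$ (thicken $K$ to $N^5\subset\bbR^5$, double it, and read off a formation from the handles). You correctly observe that the relator $2$-handles of $N^5$, together with their duals, also feed into the surgery kernel and that their number is not controlled by $d(\pi)$; your proposed remedy (``identifying each relator handle with a dual of a generator handle via the PD$_3$-structure'' followed by an unspecified ``chain-level computation'') is asserted but not executed. That cancellation is precisely what would need to be proved to bring the rank down to $2d(\pi)$, and it is not obvious. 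There is a second unaddressed point: you need a degree-one normal problem over $D(N^5)$ whose $\bbL_\bullet$-normal invariant pushes forward to the generator $[M]_2\in H_3(\pi;\cy 2)$, and since $D(N^5)$ is built from a $2$-complex model it is not automatic that $H_3(D(N^5);\cy 2)\to H_3(\pi;\cy 2)$ hits $[M]_2$. (This is in fact the same phenomenon the paper warns about in its ``Caveat''.)

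The paper's argument avoids both issues by using a ready-made $5$-dimensional normal map carrying the right class. Since a closed orientable $3$-manifold $N$ with $\pi_1(N)=\pi$ is given, one takes
$g=\id_N\times f\colon N\times T^2\to N\times S^2$,
where $f\colon T^2\to S^2$ is the Arf-invariant-one normal map; by construction this realizes $\kappa_3$ of the image of $[N]_2$. After surgering the rank-two kernel $K_1(g)=\bZ\oplus\bZ$ coming from $H_1(T^2)$, the resulting $2$-connected map $g'$ has $K_2(g')\cong I(\pi)\oplus I(\pi)$, where $I(\pi)$ is the augmentation ideal. Wall's recipe then yields a representative formation in $SU_r(\La)$ from any epimorphism $\La^r\twoheadrightarrow K_2(g')$, and since $I(\pi)$ is generated by $d(\pi)$ elements, $r=2d(\pi)$ works. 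No handle-by-handle accounting, and no appeal to PD$_3$ duality at the chain level, is needed. If you wish to rescue the thickening/doubling approach you would have to actually carry out the duality cancellation argument and verify the normal-invariant surjectivity point; the $N\times T^2$ model is both shorter and more robust.
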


\begin{proof} Let $N^3$ be a closed, orientable $3$-manifold with fundamental group $\pi$.
By definition of the assembly map, we need to determine the minimum representative in $SU_r(\La)$ for the surgery obstruction of the degree one
normal map
$$ g:= (\id \times f)\colon N \times T^2 \to N \times S^2$$
given by the the product of the Arf invariant one normal map $f\colon T^2 \to S^2$ with the identity on $N$.  After surgery on the generators of 
$$K_1(g) = \ker\{H_1(N \times T^2; \La) \to H_1(N \times S^2; \La)\} = \bZ \oplus \bZ$$
we get a $2$-connected normal map with $K_2(g') = I(\rho) \oplus I(\rho)$, where $I(\rho) := \ker \{\bZ [\pi] \to \bZ\}$ is the augmentation ideal of 
the group ring $\bZ [\pi] $. According to the recipe provided by Wall \cite[Chap.~6, pp.~58-59]{wall-book}, the surgery obstruction is represented in 
$SU_r(\La)$, where $r \geq 2d(\pi)$ since an epimorphism $\La^r \to I(\rho)$ requires $r \geq d(\pi)$.
\end{proof}

\begin{corollary}\label{cor:raag} Let $\pi$ be a \RAAG\ with $\gdim(\pi) \leq 4$. Then $\sr(\pi) \leq 6$. 
\end{corollary}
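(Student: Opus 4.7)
My plan is to reduce the bound $\sr(\pi) \leq 6$ for the RAAG $\pi$ to the 3-torus bound $\sr(\bZ^3) \leq 6$ furnished by the preceding lemma, via a naturality argument. As a first step, I would identify the quotient appearing in Definition \ref{def:threeone}. The hypothesis $\gdim(\pi) \leq 4$ gives $H_5(\pi;\bbL_\bullet) \cong H_5(\pi;\bbL)$ and $A_5^\bullet(\pi) = A_5(\pi)$ (as noted just after Definition \ref{def:twoone}), and combined with the splitting $H_5(\pi;\bbL_\bullet) = H_1(\pi;\bZ) \oplus H_3(\pi;\cy 2)$ this yields
$$H_5(\pi;\bbL)/\sI_1(\pi) \cong H_3(\pi;\cy 2).$$
Hence it suffices to produce a subset $S \subset H_5(\pi;\bbL)$ that projects onto a generating set of $H_3(\pi;\cy 2)$, and to bound $\sr(A_5(\alpha)) \leq 6$ for every $\alpha \in S$.

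Next I would exploit the Salvetti complex $\Sigma_L$ associated to the defining flag complex $L$. It is a finite $K(\pi,1)$ with one $k$-cell (a coordinate subtorus $T^k$) for each $(k-1)$-simplex of $L$, and a standard cellular calculation gives a $\cy 2$-basis of $H_3(\pi;\cy 2)$ indexed by the $2$-simplices of $L$, each basis element being the pushforward $\iota_*[T^3]$ of the fundamental class $[T^3] \in H_3(\bZ^3;\cy 2)$ under the inclusion $\iota\colon \bZ^3 \hookrightarrow \pi$ of the standard ``coordinate'' subgroup corresponding to that simplex. Taking $S$ to be the collection of these classes, I would then invoke naturality of the assembly map to obtain, for each such $\iota$, a commutative square
$$\xymatrix{H_5(\bZ^3;\bbL)\ar[d]_{\iota_*}\ar[r]^{A_5}& L_5(\bZ[\bZ^3])\ar[d]^{\iota_!}\\ H_5(\pi;\bbL)\ar[r]^{A_5}& L_5(\bZ\pi)}$$
in which induction $\iota_!$ arises by extension of scalars, $H(\bZ[\bZ^3]^r) \mapsto H(\bZ\pi^r)$, and so carries $SU_r(\bZ[\bZ^3])$ into $SU_r(\bZ\pi)$. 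Applying the preceding lemma to $\pi' = \bZ^3 = \pi_1(T^3)$ (a closed orientable 3-manifold with $d(\bZ^3) = 3$) gives $\sr(A_5([T^3])) \leq 2\cdot 3 = 6$ in $L_5(\bZ[\bZ^3])$, and this bound is preserved by $\iota_!$, yielding $\sr(A_5(\alpha)) \leq 6$ for every $\alpha \in S$ and therefore $\sr(\pi) \leq 6$.

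The only step that is not strictly formal is the Salvetti-complex identification of each generator of $H_3(\pi;\cy 2)$ with the image of a fundamental class from a standard $\bZ^3 \subset \pi$; once that is in hand, the reduction to the preceding lemma via naturality of assembly is essentially automatic. The main technical care is in setting up the induction map $\iota_!$ explicitly so that its effect on the rank $r$ of the hyperbolic form $H(\Lambda^r)$ is transparent (this comes down to the standard fact that extension of scalars commutes with hyperbolic summation), which is where I expect to spend the most effort in a careful write-up.
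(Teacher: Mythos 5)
Your proposal is correct and follows essentially the same route as the paper: reduce to the standard coordinate subgroups $\bZ^3 \subset \pi$ given by the $3$-cliques (equivalently, $2$-simplices of the flag complex), observe that their fundamental classes generate $H_3(\pi;\cy 2) \cong H_5(\pi;\bbL)/\sI_1(\pi)$, and transfer the bound $\sr \leq 2d(\bZ^3) = 6$ from the lemma via naturality of the assembly map and the fact that induction along $\bZ[\bZ^3] \hookrightarrow \bZ\pi$ preserves the rank $r$ of a representative in $SU_r$. The paper states this reduction more tersely, while you have usefully made explicit both the Salvetti-complex basis and the commutative square, but the underlying argument is the same.
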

\begin{proof}  Every \RAAG\ $\pi$ has $\gdim(\pi) < \infty$ since it is defined by a finite graph. As remarked above, 
$A_5^\bullet(\pi) = A_5(\pi)$ if $\gdim(\pi) \leq 4$. The homology group $H_3(\pi;\cy 2)$ has $\cy 2$-rank $b_3(\pi)$, which is equal to the number of $3$-cliques in the defining graph for $\pi$. Moreover, since each $3$-clique determines a subgroup $\bZ^3 \subseteq  \pi$,  the group $H_3(\pi;\cy 2)$ is generated by the images of the fundamental classes under all  the induced maps $H_3(T^3;\cy 2) \to H_3(\pi;\cy 2)$.  It is therefore enough to determine the stable range for $\rho = \bZ^3$.
\end{proof}

\begin{remark} If $\pi$ is a \RAAG\  with $\gdim(\pi) \leq n$, then a similar argument shows that $\sr(\pi) 
\leq\sr(\bZ^n)$ whenever $H_5(\pi;\bbL)$ is generated by the images of toral subgroups of $\pi$. Note that $\sr(\bZ^n) \leq n+3$ (see \cite[Theorem B]{stafford1}).
\end{remark}

We will use a stable range condition to realize the action of $L_5(\Zpi)$ on a $B$-bordism, after a suitable stabilization. The following statement is an application of   this result in the setting of Kreck \cite[Theorem 4]{Kreck99}.

\begin{proposition}\label{prop:threetwo}
 Let $\pi$ be a discrete group of type $F$ satisfying properties \textup{(W-A)}. Let
$M$ and $N$ be closed, oriented topological $4$-manifolds with the same Euler characteristic, which admit normal 1-smoothings in a fibration $B \to BSTOP$.  
Suppose that $(W, \bar\nu)$ is a normal $B$-bordism between these two 1-smoothings. If $r\geq \sr(\pi)$, then for any $x \in L_5(\Zpi)$ there exists a $B$-bordism $(W', \bar\nu)$ between the stabilized 1-smoothings $M' :=M \Sharp r(S^2 \times S^2)$ and $N':=N \Sharp r(S^2 \times S^2)$,  with $\Theta(W', \bar\nu)  = \Theta(W, \bar\nu)  + x \in \ell_5(\pi)$.
\end{proposition}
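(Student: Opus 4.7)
The plan is to stabilize $W$ to obtain a $B$-bordism $W_0$ between $M'$ and $N'$ with the same surgery obstruction as $W$, then construct an inertial normal $B$-cobordism $V$ over $M'$ with $\sigma(V) = x \in \ell_5(\pi)$, and finally set $W' := W_0 \cup_{M'} V$. For the stabilization, append to $W$ two simply connected $B$-bordisms $V_M$ from $M'$ to $M$ and $V_N$ from $N$ to $N'$, each built from trivial $2$- and $3$-handle pairs in an embedded ball and carrying the canonical extension of $\bar\nu$; both have vanishing surgery obstruction, so $W_0 := V_M \cup_M W \cup_N V_N$ is a normal $B$-bordism from $M'$ to $N'$ with $\Theta(W_0, \bar\nu) = \Theta(W, \bar\nu)$ in $\ell_5(\pi)$.

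To build $V$, use surjectivity of $A_5$ from property (W-A) to lift $x = A_5(\alpha)$ for some $\alpha \in H_5(\pi;\bbL)$. By the definition of $\sr(\pi)$ there is a generating set $S$ of $H_5(\pi;\bbL)/\sI_1(\pi)$ with $\sr(A_5(\sigma)) \leq r$ for every $\sigma \in S$, so we may decompose $\alpha = \beta + \sum_i n_i \sigma_i$ with $\beta \in \sI_1(\pi) = H_1(\pi;\bZ)$ and $\sigma_i \in S$; hence $x = A_5(\beta) + \sum_i n_i A_5(\sigma_i)$, and it suffices to realize each summand as the surgery obstruction of an inertial $B$-cobordism over $M'$ and to compose them.

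For each $\sigma_i$, the $r$ added copies of $S^2 \times S^2$ contribute an orthogonal summand $H(\La^r)$ of the equivariant intersection form on $M'$. A matrix $A_i \in SU_r(\La)$ representing $A_5(\sigma_i)$ is an isometry of $H(\La^r)$, and Wall's realization theorem (\cite[Chap.~6]{wall-book}) produces an inertial $h$-cobordism $V_i$ over $M'$, constructed by reattaching the corresponding $r$ pairs of $2$- and $3$-handles via $A_i$, with $\sigma(V_i) = A_5(\sigma_i)$; the normal $B$-structure extends because $V_i$ deformation retracts to $M'$. For the remaining summand, represent $\beta \in H_1(\pi;\bZ) = H_1(M;\bZ)$ by an embedded circle $\gamma \subset M \subset M'$ and apply the standard Arf-invariant product construction in the tubular neighborhood $\gamma \times D^4 \subset M' \times I$ (replacing a trivial $D^4$ factor by the Kervaire--Arf surgery problem) to obtain an inertial $B$-cobordism $V_\beta$ over $M'$ with $\sigma(V_\beta) = A_5(\beta)$; this already lives on $M$ and needs no further stabilization.

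Setting $V := V_\beta \cup V_1^{(n_1)} \cup \cdots \cup V_k^{(n_k)}$ (glued along copies of $M'$, with orientation-reversed copies when $n_i < 0$) and $W' := W_0 \cup_{M'} V$, additivity of the surgery obstruction yields $\Theta(W', \bar\nu') = \Theta(W_0, \bar\nu) + \sigma(V) = \Theta(W, \bar\nu) + x$, as required. The conceptual heart of the argument---and the main obstacle that the invariant $\sr(\pi)$ is designed to control---is that Wall's construction realizes an element $[A] \in L_5(\Zpi)$ as the obstruction of an inertial cobordism on $M'$ only when $[A]$ admits a representative in $SU_r(\La)$: the $r$ hyperbolic pairs in $\pi_2(M')$ are exactly the workspace used for the handle reattachment. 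The hypothesis $r \geq \sr(\pi)$ guarantees this workspace across a generating set of $H_5(\pi;\bbL)/\sI_1(\pi)$, and by composition for every element in the image of $A_5$, which by (W-A) is all of $L_5(\Zpi)$.
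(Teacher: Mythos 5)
Your proposal follows the paper's proof closely in structure and in its key ideas: stabilize $W$ to a $B$-bordism between $M'$ and $N'$, use surjectivity of $A_5$ from (W-A), decompose $\alpha$ modulo $\sI_1(\pi)$ in terms of a generating set $S$ achieving the minimum in the definition of $\sr(\pi)$, realize each summand by an inertial normal cobordism over $M'$ using the $H(\La^r)$ workspace, and glue. That is exactly the paper's strategy. A few details need repair. First, your stabilizing bordisms $V_M$, $V_N$ cannot be ``trivial $2$- and $3$-handle pairs in an embedded ball'': a cancelling $2/3$-handle pair leaves the boundary unchanged, so $V_M$ as described is a product $M\times I$, not a bordism from $M'$ to $M$. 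One should instead attach $r$ zero-framed $2$-handles along trivial unknots (equivalently, take the boundary connected sum $W\,\natural\, r(S^2\times S^2\times I)$, which is what the paper does). Second, for the realization step you cite Wall's realization theorem from \cite[Chap.~6]{wall-book}; the paper deliberately cites Cappell--Shaneson \cite[Theorem 3.1]{Cappell:1971} because it is precisely the $4$/$5$-dimensional case (realizing $L_5$ as an inertial cobordism of a $4$-manifold) where Wall's unstable realization is unavailable, and stabilization is what makes the construction go through --- this is the whole point of introducing $\sr(\pi)$. Third, the $\sI_1$ component comes from $H_1(\pi;\pi_4(\bbL_\bullet)) = H_1(\pi;L_4(\bZ))$, i.e.\ from the signature, not from the Arf invariant (which is the $L_2(\bZ)=\cy 2$ contribution, appearing in $\kappa_3$ on $H_3(\pi;\cy 2)$); so your ``Arf-invariant product construction'' over $\gamma$ should instead be a circle times a signature-$8$ degree-one normal map of $4$-manifolds. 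None of these changes the architecture of the argument, but as written the two product constructions are mislabeled and the stabilizing cobordism is described incorrectly.
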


\begin{proof} By property (W-A), the assembly map $A_5(\pi)\colon  H_5(\pi; \bbL) \to L_5(\Zpi)$ is surjective. The elements $x \in L_5(\Zpi)$ in the image of $H_1(\pi;\bZ) \cong H_1(M;\bZ)$ are realized  without stabilization (see the discussion following Remark \ref{rem:threefour}). For the elements $x = A_5(\alpha) \in L_5(\Zpi)$ in the image of $\alpha \in H_5(\pi;\bbL)$, we use the stabilized version of Wall realization due to Cappell and Shaneson \cite[Theorem 3.1]{Cappell:1971}. 

Any element is the image of a finite sum $\alpha = \sum \alpha_i$ of elements of $H_5(\pi;\bbL)$, which each have stable $L$-range at most $\sr(\pi)$, after subtracting an element of $\sI_1(\pi)$ if necessary. Pick $r \geq \sr(\pi)$ and let $M':= M \Sharp r(S^2 \times S^2)$. The realization construction can be done (for each term $\alpha_i$ of the finite sum) in small disjoint intervals 
$$M' \times [t_{i-1}, t_i] \subset M' \times [0,1], $$
 with $0=t_0 < t_1 < \dots < t_k = 1$,  to produce  degree one normal maps 
$$F_i\colon ( U_i, \bd_0 U_i, \bd_1 U_i) \to  (M' \times [t_{i-1}, t_i] , M' \times t_{i-1}, M \times t_i), \quad 1\leq i \leq k,$$
such that $\bd_0 U_i = \bd_1 U_i = M' = M \Sharp r(S^2 \times S^2)$. The restrictions of $F_i$ to the boundary components have the property that $F_i\vv_{\bd_0 U} = \id $, and $F_i\vv_{\bd_1 U}:= f_i$  is a simple homotopy equivalence. In other words, this construction produces elements of the structure set $\cS(M')$ represented by self-equivalences of $M'$.

These  normal bordisms can be glued (at disjoint levels) into a collar $M'\times [0,1]$ attached to the stablization $W \natural\, r(S^2 \times S^2 \times I)$ of the given $B$-bordism, and the reference map to $B$ extended through $M$. After including all these bordisms, the induced homotopy equivalence with target $M' \times 1$ is the composite
$f := f_1\circ f_2 \circ \dots \circ f_k$.The surgery obstruction over the collar $M'\times [0,1]$ is  $x = A_5(\alpha) = \sum A_5(\alpha_i)$, and the result follows.
\end{proof}
The following application of the theory in Kreck \cite[\S 6]{Kreck99} may be useful in cases where a potentially harder bordism calculation is feasible.

\begin{corollary}\label{cor:threethree} If $M$ and $N$ are closed, oriented  or topological $4$-manifolds which admit   $B$-bordant normal 2-smoothings in the same fibration $B\to BSTOP$, then they are s-cobordant after at most $\sr(\pi)$ stabilizations, provided their fundamental group has type $F$ and satisfies properties~\textup{(W-A)}.
\end{corollary}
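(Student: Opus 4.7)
The plan is to combine Kreck's modified surgery theorem for normal $2$-smoothings with Proposition \ref{prop:threetwo}. By hypothesis there is a normal $B$-bordism $(W, \bar\nu)$ between the given $2$-smoothings of $M$ and $N$ (and in particular $\chi(M) = \chi(N)$ since they are $B$-bordant). Because both reference maps $M \to B$ and $N \to B$ are $3$-connected, the standard modified surgery construction (see \cite[\S 6]{Kreck99}) allows one to perform surgery on the interior of $W$ below the middle dimension to produce a $3$-connected normal $B$-bordism $(W_0, \bar\nu_0)$. In this enhanced connectivity setting, the middle-dimensional surgery kernel carries a Lagrangian on which the quadratic refinement vanishes, so the modified surgery obstruction $\Theta(W_0, \bar\nu_0) \in \ell_5(\pi)$ lies in the image of the natural map $L_5(\Zpi) \to \ell_5(\pi)$; pick a lift $\theta \in L_5(\Zpi)$.

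Next, apply Proposition \ref{prop:threetwo} with stabilization parameter $r \geq \sr(\pi)$ and obstruction element $x = -\theta \in L_5(\Zpi)$. Property \textup{(W-A)} provides surjectivity of the assembly map $A_5(\pi)\colon H_5(\pi;\bbL) \to L_5(\Zpi)$, so $-\theta = A_5(\alpha)$ for some $\alpha \in H_5(\pi;\bbL)$, and the stabilized Wall realization of Cappell and Shaneson used in the proof of Proposition \ref{prop:threetwo} produces a new $B$-bordism $(W', \bar\nu')$ between $M' := M \Sharp r(S^2 \times S^2)$ and $N' := N \Sharp r(S^2 \times S^2)$ whose obstruction satisfies
\[
\Theta(W', \bar\nu') \;=\; \Theta(W_0, \bar\nu_0) + x \;=\; 0 \in \ell_5(\pi).
\]

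Since the zero element is elementary, Kreck's theorem \cite[Theorem 4]{Kreck99} then implies that $(W', \bar\nu')$ is $B$-bordant rel boundary to an $s$-cobordism between $M'$ and $N'$, giving the desired bound of at most $\sr(\pi)$ stabilizations. The genuine subtlety lies in the first step: one must verify that under the $2$-smoothing hypothesis the $\ell_5$-valued obstruction $\Theta(W_0, \bar\nu_0)$ indeed descends to an element of $L_5(\Zpi)$, since Proposition \ref{prop:threetwo} only realizes elements coming from $L_5(\Zpi)$, and without this reduction one cannot cancel the obstruction with finitely many stabilizations. Once this refinement is established, the remainder of the argument is a direct application of modified surgery together with the stable range realization.
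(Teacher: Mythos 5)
Your proof is correct and follows essentially the same route as the paper's: use the $3$-connectivity of the reference maps (normal $2$-smoothings) to invoke Kreck's observation that $\Theta(W,\bar\nu)$ lies in the image of $L_5(\Zpi) \to \ell_5(\pi)$, then apply Proposition~\ref{prop:threetwo} with $x$ the negative of a lift to kill the obstruction after $r \geq \sr(\pi)$ stabilizations, and conclude via \cite[Theorem 4]{Kreck99}. The only slip is the parenthetical claim that $\chi(M) = \chi(N)$ ``since they are $B$-bordant'': a $5$-dimensional bordism only yields $\chi(M) + \chi(N) = 2\chi(W)$, which does not by itself force equality. The equality of Euler characteristics (needed to invoke Proposition~\ref{prop:threetwo}) instead follows because both $2$-smoothings are $3$-connected maps into the \emph{same} $B$, so $M$ and $N$ share $\pi_1$, $\pi_2$, and $k$-invariant, hence isomorphic $H_2$, and therefore equal Euler characteristic.
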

\begin{proof}
For normal $2$-smoothings of $M$ and $N$, the reference maps are $3$-connected. In this case,  Kreck \cite[p.~734]{Kreck99} shows that the surgery obstruction $ \Theta(W, \bar\nu)$ of a $B$-bordism $(W, \bar\nu)$ lies in the image of  $L_5(\Zpi) \to \ell_5(\pi)$. The result now follows from Proposition \ref{prop:threetwo}.
\end{proof}

\begin{remark} The results of Cappell and Shaneson \cite[Theorem 3.1]{Cappell:1971} and Kreck \cite[Theorem 4]{Kreck99} also apply in the smooth category, and we obtain the analogous smooth versions of Proposition \ref{prop:threetwo} and Corollary \ref{cor:threethree} for normal smoothings in fibrations $B \to BSO$.
\end{remark}
\begin{proof}[The proof of Theorem A] Let $M$ and $N$ are closed, smooth, spin $4$-manifolds  with  fundamental group $\pi$, and let $f\colon N \to M$ be an oriented homotopy equivalence. In the setting of modified surgery, we have normal $2$-smoothings $(N, f)$ and $(M, \id)$ into the same fibration $B \to BSO$, where $B = M \times BSPIN$.

Under our assembly conditions (W-AA), the homomorphism $\kappa_2\colon H_2(\pi; \cy 2) \to :L_4(\Zpi)$ is injective (see Lemma \ref{lem:kappa}).   It follows that the normal invariant 
$$\eta(f) \in [M, G/TOP] \cong H_4(M; \bbL_0) \cong H_2(M;\cy 2) \oplus \bZ$$
 has trivial surgery obstruction, and lies in the image 
$$\Image\{\pi_2(M)\otimes \cy 2 \to H_2(M; \cy 2)\} = \ker\{ H_2(M;\cy 2) \to H_2(\pi; \cy 2)\}$$
since the surgery obstruction is determined  by the ordinary signature  difference and $\kappa_2(\pi)$ (see \cite[\S 1]{Hambleton:1988}).
 By
\cite[Theorem 19]{Kirby:2001}, these normal invariants are all realized by homotopy self-equivalences  (pinch maps) of $M$. Hence we may assume that the normal invariant $\eta(f)$ is trivial. Therefore, there exists a normal cobordism
$$(F, b)\colon (W, \bd_0 W, \bd_1 W) \to (M\times I, M\times 0, M\times 1)$$
with  $F\vert_{\bd_0 W} = \id\colon M \to M$ and $F\vert_{\bd_1 W} = f \colon N\to M$. 
In other words,  we have two $B$-bordant normal 2-smoothings in the same fibration $ M \times BSPIN\to BSTOP$.
 We now apply Corollary \ref{cor:threethree} to complete the proof. \end{proof}

\section{Homotopy self-equvalences of $4$-manifolds }\label{sec:four}
We will  recall a braid diagram relating homotopy self-equivalences to bordism theory (see Hambleton and Kreck \cite{Hambleton:2004}). 
The proof of Theorem B will use an approach to cancellation introduced by Pamuk \cite{Pamuk:2009,Pamuk:2010} based on this braid.

Let $\hept M$ denote the group of
homotopy classes of homotopy self-equivalences, preserving both
the given orientation on $M$ and a 
fixed base-point $x_0 \in M$. There are also
 ``pointed" versions of 
the space $\heqpt{B}$ of base-point preserving homotopy
equivalences of $B$ (the algebraic $2$-type of $M$).
The main result  of \cite{Hambleton:2004} for spin manifolds is expressed in a commutative braid of interlocking exact sequences:
\vskip .3cm
$$\begin{matrix}
\xymatrix@!C@C-30pt{
\Ospin_5(M)    \ar[dr] \ar@/^2pc/[rr]   &&
\htildeM  \ar[dr] \ar@/^2pc/[rr]^{\delta} &&
\hept{B}\ar[dr]^{\beta}  \\
& \Ospin_5(B) \ar[dr] \ar[ur]  &&
\hept{M}  \ar[dr]^{\alpha}\ar[ur] &&\Ospin_4(B) \\
 \pi_1(\heqpt{B}) \ar[ur] \ar@/_2pc/[rr] && \rOspin_5(B,M)
  \ar[ur]^\gamma \ar@/_2pc/[rr]&&
\rOspin_4(M)\ar[ur]
}\end{matrix}
$$
\vskip .8cm\noindent
valid
for any closed, oriented smooth or topological spin $4$-manifold $M$ (see \cite[Theorem 2.16]{Hambleton:2004}).
The maps labelled $\alpha$ and  $\beta$  are
not necessarily group homomorphisms, so exactness  is understood in the
sense of  ``pointed sets" (meaning that $image = kernel$, where $kernel$ is the pre-image of the base point).

\medskip
Here is an informal description of the other objects in the braid. 
\begin{enumerate}
\item The group $\hM$ consists of 
oriented $h$-cobordisms $W^5$ from $M$ to $M$, under the
equivalence relation induced by $h$-cobordism relative to the
boundary. The orientation of $W$ induces  opposite
orientations on the two boundary components $M$.
An $h$-cobordism gives a homotopy self-equivalence of $M$,
and we get a homomorphism $\hM\to \he{M}$.

\item The natural map
$c\colon M \to B$ is $3$-connected, and we refer
to this as the classifying map of $M$. There is an
induced homomorphism $\he M \to \he B$,
the group of homotopy
classes of homotopy self-equivalences of $B$, by 
obstruction theory and the
naturality of the construction.

\item If $M$ is a spin manifold, 
we use  the  smooth (or topological) 
bordism groups $\Ospin_n(B)$. 
By imposing the requirement that the reference maps
to $M$ must have degree zero, we obtain modified bordism
groups $\rOspin_4(M)$ and $\rOspin_5(B,M)$.

\item The map $\alpha\colon \hept{M} \to \rOspin_4(M)$ is given by $\alpha(f) = [M, f] - [M,\id]$, and the map $\beta\colon \hept{B} \to \Ospin_4(B)$ is given by $\beta(\phi) = [M, \phi\circ c] - [M, c]$. For the map $\gamma$, see \cite[\S 2.5]{Hambleton:2004}.

\item A variation of $\hM$, denoted $\htildeM$, will
also be useful. This is the group of oriented
bordisms $(W,\bd_{-}W, \bd_{+} W)$ with
$\bd_{\pm}W = M$, equipped with a map
$F\colon W \to M$.
We require  the restrictions $F|_{\bd_{\pm}W}$
 to the boundary components
to be   homotopy equivalences (and the identity on
 the component $\bd_{-} W$). The equivalence
relation on these objects is induced by
bordism (extending the map to $M$) relative to the boundary
(see \cite[Section 2.2]{Hambleton:2004}  for the details).
\end{enumerate}

\section{The image of the fundamental class}\label{sec:fourB}
Let $B:= B(M)$ denote the algebraic $2$-type of a closed oriented topological $4$-manifold  $M$with infinite fundamental group $\pi$.  
We will indicate the places where we assume that $\pi$  has one end, or equivalently that $H^1(\pi; \La) = 0$. By Poincar\'e duality, this implies that $H_3(\wM;\bZ) = H_3(M;\La) = 0$. Since $\pi_3(B) = 0$, we also have $H_3(\wB;\bZ) = 0$.

\begin{remark}\label{rem:sixzero}  For some applications we will assume that the end homology  $H_1^e(E\pi) = 0$. This imposes restrictions on the low-dimenaional cohomology of $\pi$, via the exact sequence
$$0 \to \Ext_\bZ(H^2_e(E\pi),  \bZ) \to H_1^e(E\pi; \bZ) \to \Hom_\bZ(H^1_e(E\pi), \bZ),\to 0$$
from \cite[Proposition 2.9]{Laitinen:1996}, and the isomorphisms $H^q_e(E\pi) \cong H^{q+1}(\pi;\La)$, for $q >0$. 
Therefore if $H^e_1(E\pi;\bZ) = 0$ then $H^2(\pi;\La)$ is all torsion and $H^3(\pi; \La$ is torsion-free.
For example, $H_1^e(E\pi) = 0$ whenever $H^q(\pi;\La) = 0$ for $q = 2,3$.

The statement that $H^2(\pi;\La)$ is free abelian for all finitely-presented groups (which would imply that $\pi_2(M)$ is free abelian) is said to be a conjecture of Hopf \cite[Remark 4.5]{Geoghegan:1985}). The conjecture is still open, although it  has been verified in some cases (see \cite{Mihalik:1992,Mihalik:1992a}).
\end{remark}

Our most general result so far about the image of fundamental class requires some group cohomology conditions (introduced in  \cite[Definition 3.1]{Hambleton:2019}). In the setting of Theorem B, these conditions are satisfied.
\begin{definition}\label{def:tamecoh}
A finitely presented group $\pi$  has \emph{tame cohomology} if the following conditions hold:
\begin{enumerate}
\item $ \Hom_\La(H^2(\pi;\La), \La) = 0$
\item  $\Hom_\La(H^3(\pi;\La), \La) = 0$
\item  $\Ext^1_\La(H^3(\pi;\La), \La) = 0$.
\end{enumerate}
\end{definition}

In applications of the braid diagram, it is important to understand the maps in the exact sequence
$$ \Ospin_5(B) \to \htildeM  \xrightarrow{\delta}
\hept{B} \xrightarrow{\beta}
 \Ospin_4(B). $$
 In particular, if $\phi\colon B \to B$ is a homotopy self-equivalence, we need to understand  the image $\phi_*(c_*[M]) \in H_4(B;\bZ)$ of the fundamental class $[M] \in H_4(M;\bZ)$ in order to compute $\beta(\phi) = [M, \phi\circ c] - [M,c] \in  \Ospin_4(B)$.

 We first need some information about $H_4(B;\bZ)$. Recall that we have an expression $H_4(\wB;\bZ) \cong \Gamma(\pi_2(B))$, in terms of Whitehead's $\Gamma$-functor (see \cite[Chap.~II]{Whitehead:1950}). In addition, we have the orientation class  $\mu_M \in H_4(B, \wB) = \bZ$, given in Definition \ref{def:enriched} as the image of the fundamental class $[M] \in H_4(M)$ under the composition $H_4(M) \to H_4(M, \wM) \xrightarrow{c_*}  H_4(B, \wB)$.
 
 \begin{proposition}\label{prop:sixone} Suppose that $\pi$ has one end.
 \begin{enumerate}
 \setlength{\itemsep}{0pt plus 5pt}
 \item The map $c_*\colon H_4(M;\bZ) \to H_4(B; \bZ)$ is injective.
 \item The composition
$\omega\colon H_4(M;\bZ) \xrightarrow{c_*} H_4(B;\bZ)  \xrightarrow{\cap\ }  \Hom_{\La}(H^2(B;\bZ), H_2(B;\bZ))$ 
induces the ordinary intersection form $q_M$.
\item If $\phi \in  \hept{B}$ is orientation-preserving, so that $\phi_*(\mu_M) = \mu_M \in H_4(B, \wB)$,  then $c_*[M] - \phi_*(c_*[M]) \in \Image(H_4(\wB;\bZ)\otimes_\La \bZ \to H_4(B;\bZ))$. 
\item If $\phi \in  \hept{B}$ induces the identity on $\twotypeM$, then $\phi_*(\mu_M) = \mu_M \in H_4(B, \wB)$.
 \end{enumerate}
 \end{proposition}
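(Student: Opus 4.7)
The plan is to realize $\phi$ via the extension $1 \to H^2(\pi; \pi_2(B)) \to \hept{B} \to \Isom(\twotypeM) \to 1$ as a self-equivalence $\phi_\alpha$ for some $\alpha \in H^2(\pi; \pi_2(B))$, and then to show by a direct $H_4$ computation that the induced action on $H_4(B, \wB) \cong \bZ$ is trivial.

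First I would view $B$ as the principal $K(\pi_2(B), 2)$-fibration over $B\pi$ classified by $k_M$, and write $\phi_\alpha = \mu \circ (\id, a \circ p)$, where $p \colon B \to B\pi$, the map $a \colon B\pi \to K(\pi_2(B), 2)$ represents $\alpha$, and $\mu \colon B \times K(\pi_2(B), 2) \to B$ is the principal fiber action. The lift $\widetilde{\phi_\alpha} \colon \wB \to \wB$ is a self-map of $\wB \simeq K(\pi_2(B), 2)$ inducing the identity on $\pi_2$, hence is homotopic to $\id_{\wB}$. Consequently, writing $\iota \colon \wB \to B$ for the covering map, $\phi_* \circ \iota_* = \iota_* \circ (\widetilde{\phi_\alpha})_* = \iota_*$, so $\phi_*$ acts as the identity on $\Image(\iota_* \colon H_*(\wB) \to H_*(B))$. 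Combined with the long exact sequence of $(B, \wB)$ (using $H_3(\wB) = 0$), this yields a well-defined action of $\phi_*$ on $H_4(B, \wB) \cong \bZ$ by some sign $\pm 1$, and it suffices to show this sign is $+1$.

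To evaluate the sign, compute $(\phi_\alpha)_*(c_*[M]) - c_*[M]$ modulo $\Image(\iota_*)$ by expanding $\phi_\alpha = \mu \circ (\id \times (a \circ p)) \circ \Delta_B$, using naturality $\Delta_B \circ c = (c \times c) \circ \Delta_M$, and the K\"unneth decomposition of $(\Delta_M)_*[M] \in H_4(M \times M)$. Assuming torsion-freeness of $\pi_2(B)$ (so that $H_q(K(\pi_2(B), 2); \bZ) = 0$ for odd $q$), only the $(4, 0)$, $(0, 4)$, and $(2, 2)$ components of $(\Delta_M)_*[M]$ contribute: the $(4, 0)$ term returns $c_*[M]$ itself, and the $(0, 4)$ term equals $\iota_*((a \circ u)_*[M])$ with $u = p \circ c$, trivially in $\Image(\iota_*)$. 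The main obstacle is the $(2, 2)$ contribution, which after unwinding with a Poincar\'e-dual basis of $H_2(M)$ equals $\mu_*((c_* \otimes \id)(\beta))$, with $\beta = u^*(\alpha) \cap [M] \in H_2(M; \pi_2(B))$ the Poincar\'e dual of the pulled-back cohomology class. Since $u^*(\alpha)$ lies in the base filtration of $H^2(M; \pi_2(B))$ from the Serre spectral sequence for $\wM \to M \to B\pi$, and Poincar\'e duality on the closed oriented $4$-manifold $M$ exchanges base and fiber filtrations, $\beta$ lies in $\Image(H_2(\wM; \pi_2(B)) \to H_2(M; \pi_2(B)))$; applying $c_*$ followed by the fiber action $\mu_*$ (which preserves fibers) then lands in $\Image(H_4(\wB) \to H_4(B))$. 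Thus the $(2, 2)$ term also vanishes in $H_4(B, \wB)$, giving $\phi_*(\mu_M) = \mu_M$.
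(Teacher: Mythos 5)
Your proposal addresses only part (iv) of the Proposition; parts (i), (ii) and (iii) are not treated. For (iv), your first two paragraphs contain a good idea, but the third paragraph differs from the paper's approach and has two genuine gaps.

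The sound part: you identify $\phi = \phi_\alpha$ via the extension $1 \to H^2(\pi;\pi_2(B)) \to \hept B \to \Isom(\twotypeM) \to 1$, observe that the lift $\widetilde{\phi_\alpha}\colon \wB \to \wB$ is a self-map of $K(\pi_2(B),2)$ inducing the identity on $\pi_2$ and hence is homotopic to $\id_{\wB}$, so that $\phi_*$ fixes $\Image(\iota_*)$ pointwise. This is correct. What you should do at this point is feed this directly into the Cartan--Leray spectral sequence $H_p(\pi;H_q(\wB))\Rightarrow H_{p+q}(B)$: since $\phi$ acts trivially on $\pi$ and (by your lift observation, or by functoriality of $\Gamma$) trivially on $H_q(\wB)$, it acts trivially on $E^2_{p,q}$, hence on the associated graded of $H_4(B)$ and on the subquotient $H_4(B,\wB)\cong\bZ$; because $\bZ$ is torsion-free and the action is trivial on a nonzero sub or quotient group in the filtration, the action on $H_4(B,\wB)$ is the identity. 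This is essentially the paper's argument, which works with the exact sequence $H_5(\pi)\to \Tor^\La_2(\bZ,H_2(\wB))\to H_4(B,\wB)\to H_4(\pi)$ and uses that $\phi$ acts trivially on the outer terms.

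Instead, your third paragraph attempts an explicit K\"unneth computation of $(\phi_\alpha)_*c_*[M]$, and here there are two problems. First, you inject the hypothesis that $\pi_2(B)$ is torsion-free so that $H_{\mathrm{odd}}(K(\pi_2(B),2);\bZ)=0$; this is not assumed in the Proposition (indeed the paper's Remark \ref{rem:sixzero} notes that freeness of $\pi_2(M)$ is only conjectural). Second, the key claim that ``Poincar\'e duality on $M$ exchanges base and fiber filtrations'' is not correct in general: cap product with $[M]$ shifts the homological filtration of the Cartan--Leray spectral sequence by the filtration degree of $[M]$, so if $u_*[M]\neq 0$ in $H_4(\pi)$ then $[M]$ has filtration degree $4$ and $u^*(\alpha)\cap[M]$ lands in $F_2 H_2(M;\pi_2(B))$, not in $F_0=\Image(H_2(\wM;\pi_2(B)))$. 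The Proposition assumes only that $\pi$ has one end and does not forbid $H_4(\pi)\neq 0$, so this step is a genuine gap. Dropping the explicit K\"unneth calculation and using the spectral-sequence argument sketched above repairs both issues and essentially recovers the paper's proof.
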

 \begin{proof} Here we will use homology with integer coeffiicients unless otherwise stated. For part (i) we compare that spectral sequences of the coverings $\wM \to M$ and $\wB \to B$, and note that $H_3(\wM) = H_4(\wM) = 0$ under our assumptions. The terms $E^2_{p,q} = \Tor^\La_p(\bZ, H_q(\wM))$ are mapped isomorphically for $q \leq 3$. 
 We have a commutative diagram:
 $$\xymatrix{  H_5(\pi;\bZ) \ar[r]^(0.4){ d_{5.0}^3}\ar@{=}[d]&\Tor^\La_2(\bZ, H_2(\wM)) \ar[r]\ar@{=}[d]&H_4(M, \wM) \ar[d]\ar[r]& H_4(\pi;\bZ) \ar@{=}[d]\\
  H_5(\pi;\bZ) \ar[r]^(0.4){ d_{5.0}^3}&\Tor^\La_2(\bZ, H_2(\wB)) \ar[r]& H_4(B, \wB) \ar[r] &H_4(\pi;\bZ)
 }$$
 We see that $\bZ = H_4(M) \cong H_4(M, \wM) \cong H_4(B, \wB)$ under the natural maps, and part (i) follows. By definition, $[M] \mapsto \mu_M$ under this composite isomorphism.
 
 For any $a, b \in H^2(M)$,  we have $x = a \cap [M]$ and $y = b\cap [M]$ in $H_2(M)$ under Poincar\'e duality. 
 Then $q_M(x,y) = \la a \cup b, [M]\ra = \la a \cap [M], b\ra$. Since $c$ is a $3$-equivalence, 
 $$q_M(x,y) = \la c^*\bar a \cup c^* \bar b, [M]\ra =  \la \bar a \cup  \bar  b,c_* [M]\ra$$
 for some $\bar a, \bar b \in H^2(B)$. Therefore 
 $q_M(x,y) =  \la \omega ([M])(a), b\ra$. This gives part (ii).
 
 For part (iii),  
 we have  $c_*[M] - \phi_*(c_*[M]) \in \Image(H_4(\wB;\bZ)\otimes_\La \bZ \to H_4(B;\bZ))$, since 
$\mu_M$ generates $H_4(B, \wB)$, and  $\phi_*(\mu_M) = \mu_M$ by assumption.

For part (iv), we consider the exact sequence:
$$ \dots \to H_5(\pi;\bZ) \xrightarrow{\ d_{5.0}^3\ }\Tor^\La_2(\bZ, H_2(\wB)) \to H_4(B, \wB) \to H_4(\pi;\bZ) \to \dots $$
obtained from the spectral sequence of the covering $\wB \to B$. Since $H_4(B, \wB)  \cong \bZ$, and $\phi$ acts trivially on $\pi_1(M)$ and $\pi_2(M)$, the result follows.
 \end{proof}

We recall from Definition \ref{def:equiv} that the class $\trf [M] \in  H_4^\LF(\widetilde M; \bZ) \cong H_4(M;\hL)$ induces the equivariant intersection form $s_M$ on $\pi_2(M)$. In this expression, 
$$\hL = \{ \sum n_g \cdot g \vv {\rm \ for\ } g \in G, {\rm \ and\ } n_g \in \bZ\}$$
denotes the formal (possibly infinite) integer linear sums of group elements (see Section \ref{sec:seven}). The transfer map can be expressed as the change of coefficients homomorphism 
$\trf \colon H_4(M;\bZ) \to H_4(M; \hL)$ via the map $1 \mapsto \hN := \sum \{ g \vv g \in \pi\}$.  The image of the transfer map therefore lands in 
the $\pi$-fixed subgroup $H_4^\LF(\widetilde M; \bZ)^\pi$. 

We now translate this information to $B$. The transfer map
$$\trf\colon H_4(B;\bZ) \to H_4(B; \hL)^\pi \cong H_4^\LF(\widetilde B; \bZ)^\pi$$
is similarly defined by the coefficient inclusion $\bZ \subset \hL$ and the identification provided by Corollary \ref{cor:sevenzero}.
Define a map 
$$\omega\colon H_4^\LF(\widetilde B; \bZ)^\pi \to \Hom_{\La}(H^2(B;\La), H_2(B;\La))$$
by setting $\omega(z) = z \cap c$, for  $z \in H_4^\LF(\widetilde B; \bZ)^\pi$ and $c \in H^2(B;\La)$.

If $\alpha \in \Image(H_4(\wB;\bZ)\otimes_\La \bZ \to H_4(B;\bZ))$, we can pick a lift 
$\hat\alpha \in H_4(\wB;\bZ)$, and   then the restriction of the transfer map
$\trf (\alpha) \in H_4^\LF(\widetilde B; \bZ)^\pi$ is just the image of 
 $\hat\alpha\otimes_\La \hN  \in H_4(\wB;\bZ)\otimes_\La \hL$. 
 This expression is independent of the choice of lift $\hat \alpha \mapsto \alpha$, since elements of the form $(1-g)\otimes_\La \hN = 0$ for all $g \in\pi$.

\begin{definition}\label{def:sixthree} A $\La$-module $L$ is called \emph{torsionless} if there exists an $\La$-embedding $L \subset F$, where $F$ is a finitely generated free $\La$-module.
 The module $L$ is called \emph{strongly torsionless} if additionally the induced map $\Gamma(L) \otimes_\La \bZ \to \Gamma(F) \otimes_\La \bZ$
 is injective. 
\end{definition}
We remark that these properties depend only on the stable class of the module.
Note that if $L = H_2(\wB) \cong \pi_2(M)$, we have $\Gamma(L) = H_4(\wB)$. For the terminology see \cite[\S 4.4, pp.~476-477]{Bass:1960} and the statement that the dual of a finitely generated $\La$-module embeds in a finitely generated free module.

\begin{lemma}\label{lem:fourBone} 
Assume that $\pi$ has one end. Then
\begin{enumerate}
\item  The  image $\omega(\trf (c_*[M]))$ induces the equivariant intersection form $s_M$.
\item The natural map $H_4(\wB) \otimes_\La \hL \to H_4(B;\hL)\cong H_4^\LF(\widetilde B)$ is injective.
\item If $H_2(\wB)$ is strongly torsionless, and $\pi$ has tame cohomology,  then the composite 
$$H_4(\wB)\otimes_\La \bZ \xrightarrow{\ \trf\ }   H_4^\LF(\widetilde B)^\pi \xrightarrow{\ \omega\ } \Hom_{\La}(H^2(B;\La), H_2(B;\La))$$
 is injective.
\end{enumerate}
\end{lemma}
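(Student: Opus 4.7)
The plan is to address the three parts in sequence, with part (iii) as the technical heart and expected main obstacle. For part (i), I would argue by naturality of the transfer and cap product. Since the classifying map $c\colon M \to B$ is $3$-connected, it induces isomorphisms $c^*\colon H^2(B;\La) \xrightarrow{\cong} H^2(M;\La)$ and $c_*\colon H_2(M;\La) \xrightarrow{\cong} H_2(B;\La)$. For any $a \in H^2(B;\La)$,
\[
\omega(\trf(c_*[M]))(a) \;=\; \trf(c_*[M]) \cap a \;=\; c_*\bigl(\trf[M] \cap c^*(a)\bigr),
\]
so under the above identifications, $\omega(\trf(c_*[M]))$ corresponds to cap product with $\trf[M]$ on $H^2(M;\La)$. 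By the alternate description in Definition~\ref{def:equiv}, evaluating cap product with $\trf[M]$ against a second cohomology class recovers the equivariant intersection form $s_M$.

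For part (ii), I would use the hyperhomology spectral sequence
\[
E^2_{p,q} \;=\; \Tor^\La_p\bigl(H_q(\wB), \hL\bigr) \;\Longrightarrow\; H_{p+q}(B;\hL)
\]
arising from the identification $C_*(B;\hL) = C_*(\wB) \otimes_\La \hL$. The natural map of the statement is the edge map at bi-degree $(0,4)$. Because $B$ is a $2$-type with $\pi_3(B) = 0$, its universal cover $\wB$ is an Eilenberg--MacLane space $K(\pi_2(M), 2)$, and by Cartan's computations $H_1(\wB) = 0 = H_3(\wB)$. This eliminates the incoming differentials $d_2$ from $E^2_{2,3}$ and $d_4$ from $E^4_{4,1}$. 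The remaining incoming differentials, from $E^r_{r,5-r}$ with $r \in \{3,5,7,\dots\}$, would be handled by exploiting the identification $\hL \cong \Hom_\bZ(\La, \bZ)$ and the description of $C_*(B;\hL)$ as the locally finite chain complex on $\wB$.

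For part (iii), I would exploit the strong torsionlessness of $\pi_2(M) = H_2(\wB)$. Pick an embedding $\pi_2(M) \hookrightarrow F$ into a finitely generated free $\La$-module with the induced map $\Gamma(\pi_2(M)) \otimes_\La \bZ \hookrightarrow \Gamma(F) \otimes_\La \bZ$ injective. Since $H_4(\wB) = \Gamma(\pi_2(M))$, naturality of $\omega \circ \trf$ reduces injectivity to the analogous statement for $F$ in place of $\pi_2(M)$. For $F = \La^n$ the group $\Gamma(F)$ admits an explicit $\La$-basis (the ``diagonal'' classes $\gamma(e_i)$ together with off-diagonal cross-terms), and the composite can be computed directly: it records the Hermitian pairings on $F$ through the evaluation map $e_M\colon H^2(B;\La) \to \Hom_\La(H_2(B;\La), \La)$ from the universal coefficient spectral sequence appearing in Definition~\ref{def:isometry}. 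The main obstacle lies here: one must check that every class in $\Gamma(F) \otimes_\La \bZ$ produces a distinguishable element in the target, so that neither the diagonal $\gamma(e_i)$ contributions nor the off-diagonal cross-terms are collapsed. This is where the tame cohomology hypotheses of Definition~\ref{def:tamecoh} enter, ensuring that the universal coefficient contributions from $H^2(\pi;\La)$ and $H^3(\pi;\La)$ do not obstruct detection of these generators.
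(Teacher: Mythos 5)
Parts (i) and (iii) of your proposal are directionally aligned with the paper, but part (ii) has a genuine gap and part (iii) is too vague at the critical step.

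For part (ii), you correctly set up the Cartan--Eilenberg spectral sequence $E^2_{p,q} = \Tor^\La_p(H_q(\wB),\hL) \Rightarrow H_{p+q}(B;\hL)$ and correctly observe that $H_1(\wB)=H_3(\wB)=0$ kills $E^2_{2,3}$ and $E^2_{4,1}$. But this leaves $d_3\colon E^3_{3,2}\to E^3_{0,4}$ and $d_5\colon E^5_{5,0}\to E^5_{0,4}$, and your remark that these would be ``handled by exploiting $\hL\cong\Hom_\bZ(\La,\bZ)$'' does not actually resolve them: $\hL$ is co-induced, not flat, so $\Tor^\La_3(\pi_2(M),\hL)$ and $\Tor^\La_5(\bZ,\hL)$ can both be nonzero. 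The paper's proof resolves these differentials by \emph{comparing} with the spectral sequence for $M$ itself, where the one-end hypothesis forces $H_3(\wM)=H_4(\wM)=0$ and $H_k(M;\hL)=H_k^{\LF}(\wM)=0$ for $k\ge 5$; these vanishings force $d_3^{6,0}$ to surject onto $E^3_{3,2}(M)$ and $d_3^{5,0}$ to be injective, and since the $E^2$-terms agree for $q\le 3$ this transports to the $B$-spectral sequence, killing all differentials into $(0,4)$. That comparison with $M$ is the missing idea; without it your argument stalls.

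For part (iii), the reduction to a free module $F$ and the role of tame cohomology are both correct in spirit, but two pieces are missing or incorrect. First, the naturality you appeal to requires constructing an actual map of spaces: the paper builds a Postnikov $2$-type $P$ with $\pi_2(P)=F$ and pushed-forward $k$-invariant, and the reduction uses diagram~\eqref{diag:sixfour} over $B\to P$. Second, $\Gamma(F)$ for $F=\La^n$ does \emph{not} admit ``an explicit $\La$-basis'' as you assert; $\Gamma(\La)$ is not free (cf.\ $\Gamma(I(\pi))\oplus\La\cong\Gamma(\La)$), and the paper instead uses Whitehead's description of $\Gamma(F)$ as symmetric admissible homomorphisms $F^*\to F$ and reduces injectivity of $\omega_P\circ\trf$ to injectivity of the norm map $N\colon\Gamma(F)_\pi\to\Hom_\bZ(F^*,F)^\pi$, following \cite[Lemma 5.15]{HKT09}. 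The tame cohomology conditions are indeed used as you say, to identify $\Hom_\La(H^2(P;\La),\pi_2(P))$ with $\Hom_\La(\Hom_\La(F,\La),F)$ so that the bottom row of the diagram becomes computable; but without the Postnikov space $P$ and the norm-map argument, the proof is not complete.
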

\begin{proof} 
The first statement follows from the definition of $s_M$. Since $c\colon M \to B$ is a $3$-equivalence, the cap product
$$\cap\, \trf(c_*[M])\colon H^2(B;\La) \to H_2(B;\La)$$
is an  isomorphism by Poincar\'e duality. The hermitian form induced by the cup product
$$ H^2(B;\La) \times H^2(B;\La) \to H^4(B;\La) \to H^4(M;\La)\cong \bZ$$
may be identified with $s_M$ (see Definition \ref{def:equiv}). 
For part (ii) we compare the spectral sequences under the map  $H_4(M;\hL) \to H_4(B;\hL)$, starting with
$$E^2_{p,q}(M) = \Tor^{\La}_p(H_q(\wM), \hL) \to E^2_{p,q}(B) = \Tor^{\La}_p(H_q(\wB), \hL).$$
Note that $H_3(\wM) = H^1(\pi;\La) = 0$, by our assumption that $\pi$ has one end.
Since $H_k(M;\hL) = 0$ for $k \geq 5$ and $H_4(\wM) = 0$, the differential $d_3^{5,0}$ is injective, and the differential 
$d_3^{6,0}$ is surjective (in the spectral sequence for $H_4(M;\hL)$). By comparison, there are no non-zero differemtials hitting the $(0,4)$ position in the spectral sequence for $H_4(B;\hL)$. Hence the term $E^2_{0,4}(B) =  H_4(\wB) \otimes_\La \hL$ survives, and injects into $H_4(B;\hL)$.

For part (iii): since
  $L = H_2(\wB) = H_2(B;\La)$ is  torsionless there exists an $\La$-embedding $e \colon L \subset F$, where $F$ is a finitely generated free $\La$-module.
Let $P$ denote the 2-stage Postnikov tower with $\pi_1(P) = \pi$, $\pi_2(P) = F$, and $k$-invariant pushed forward by the induced map  $H^3(\pi; \pi_2(B)) \xrightarrow{\ e_*\ } H^3(\pi; \pi_2(P))$. We have a commutative diagram
\eqncount
\begin{equation}\label{diag:sixfour}
\vcenter{
\xymatrix@R-5pt{
H_4(\wB)\otimes_\La \bZ\   \ar[r]^(0.5){\trf}  \ar[d]^{e_*}&H_4^\LF(\widetilde B)^\pi \ar[r]^(0.3){\omega_B}\ar[d]^{e_*}&
\Hom_{\La}(H^2(B;\La), H_2(B;\La))\ar[d]^{\Hom(e^*, \, e_*)}\\
H_4(\wP)\otimes_\La \bZ\   \ar[r]^(0.5){\trf} &H_4^\LF(\widetilde P)^\pi \ar[r]^(0.3){\omega_P}&
\Hom_{\La}(H^2(P;\La), H_2(P;\La))
}}
\end{equation}
The left-hand vertical arrow is injective since $H_4(\wP) = \Gamma(F)$ and we have assumed that $L$ is strongly torsionless. We also need some more information about the sequence
$$0 \to H^2(\pi;\La) \to H^2(P;\La) \to \Hom_\la(\pi_2(P), \La) \to H^3(\pi;\La) \to 0.$$
Under  the tame cohomology assumption (ii)   of Definition \ref{def:tamecoh}, we have an injection:
$$ 0 \to \Hom_\La(\Hom_\La(\pi_2(P), \La), \pi_2(P)) \to \Hom_\La(H^2(P;\La), \pi_2(P))$$
after applying  $\Hom_\La(-, \pi_2(P))$ to each term, since $\pi_2(P) = H_2(P;\La)$ is free over $\La$.
If  we add conditions (i) and (iii),   then we get an isomorphism
$$ \Hom_\La(\Hom_\La(\pi_2(P), \La), \pi_2(P)) \cong\Hom_\La(H^2(P;\La), \pi_2(P)).$$
As a consequence, we can use the identification $\omega_P\colon H_4^\LF(\widetilde P)^\pi  \to \Hom_\La(\Hom_\La(F, \La), F)$ in studying  the diagram \eqref{diag:sixfour}.

To show that the lower horizontal composite $\omega_P\circ \trf$ is injective, we 
recall the proof of \cite[Lemma 5.15]{HKT09}. If $F = \La^r$, we have a $\bZ$-base $\{a_i\}$ for $F$ consisting of elements $a_i = ge_j$, for some $g \in \pi$, where $\{e_j\}$ denotes a $\La$-base for $F$. Following \cite[p.~63]{Whitehead:1950}, define
$$F^* = \{ \phi\colon F \to \bZ \vv \phi(a_i) = 0 \text{\ for almost all\ } i\}.$$
Let $\{a_i^*\}$ denote the dual basis for $F^*$. We say that a homomorphism $f\colon F^* \to F$ is \emph{admissible} of $f(a_i^*) = 0$ for almost all $i$, and that $f$ is \emph{symmetric} if $a^*fb^* = b^*fa^*$ for all $a^*, b^* \in F^*$. Then
$$\Gamma(F) \cong \{f \colon F^* \to F \vv f \text{\ is symmetric and admissible}\}.$$
We now observe that $\Hom_\La(F, \La) \cong F^*$, and we have a commutative diagram:
$$\xymatrix@R+3pt{& H_4^{\LF}(\wP;\bZ)^\pi \ar[r]^(0.4){\omega}& \Hom_\bZ(F^*, F)^\pi\\
\Gamma(F)_\pi \ar[ur]^{\trf}\ar@{=}[r]&H_4(\wP;\bZ)_\pi\  \ar[u]^N \ar@{>->}[r]^(0.4){}&\Hom^a_\bZ(F^*, F)_\pi \ar[u]^N
}$$
where $\Hom^a$ denotes the admissible homomorphisms, and the norm maps $N\colon L_\pi \to L^\pi$ are formally defined for any $\La$-module by applying the operator $\hN = \sum\{ g\vv g\in \pi\}$. Here $L_\pi = L \otimes_\La \bZ$ is the co-fixed set, and $L^\pi$ is the fixed set. For the middle term, the norm map $N$ is induced by the coefficient map $H_4(P;\bZ) \to H_4(P; \hL) \cong 
H_4^{\LF}(\wP;\bZ)$ sending $1 \to \hN \in \hL$. The right-hand norm map in the diagram is the direct sum of the norm maps
$$ N \colon \Hom^a_\bZ(\La^*, \La)_\pi \to \Hom_\bZ(\La^*, \La)^\pi$$
and the rest of the argument to show that $N$ is injective is explained in detail on \cite[p.~144]{HKT09} (note that the reference to Whitehead \cite{Whitehead:1950} has been corrected here). To check that the map 
$$H_4(\wP;\bZ)_\pi\ \to \Hom^a_\bZ(F^*, F)_\pi $$
is injective  (but not bijective), it is convenient to use the description for $\Gamma(\La)$ given in \cite[Lemma 2.2]{Hambleton:1988a}.
Hence $\omega_P \circ \trf$ is injective, and from  diagram \eqref{diag:sixfour} we conclude that $\omega_B \circ \trf$ is injective as required.
\end{proof}

\begin{corollary}\label{cor:sixtwo}  Suppose that $\pi$ has one end, and $\pi$ has tame cohomology. 
 If  $H_2(\wB)$ is strongly torsionless, and $\phi\in  \hept{B}$ induces an oriented isometry of the quadratic $2$-type
 $\fQ(M)$,  then 
$\phi_*(c_*[M]) = c_*[M] \in  H_4(B;\bZ)$.
\end{corollary}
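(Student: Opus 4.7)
The plan is to write $\alpha := c_*[M] - \phi_*(c_*[M]) \in H_4(B;\bZ)$ and show $\alpha = 0$. The strategy combines Proposition \ref{prop:sixone}(iii), which confines $\alpha$ to a small subgroup, with Lemma \ref{lem:fourBone}, which provides an injective invariant on that subgroup and identifies its value on $c_*[M]$ with the equivariant intersection form.

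First, since $\phi$ is an orientation-preserving isometry, Definition \ref{def:isometry}(iii) gives $\phi_*(\mu_M) = \mu_M \in H_4(B,\wB)$. Applying Proposition \ref{prop:sixone}(iii) then places $\alpha$ in the image of $H_4(\wB;\bZ)\otimes_\La \bZ \to H_4(B;\bZ)$. Under the standing hypotheses (one end, tame cohomology, $H_2(\wB)$ strongly torsionless), Lemma \ref{lem:fourBone}(iii) asserts that the composite $\omega \circ \trf$ defined on $H_4(\wB)\otimes_\La \bZ$ is injective; in particular the first factor into $H_4(B;\bZ)$ is injective, so the lift of $\alpha$ is unique and it suffices to verify that $\omega(\trf(\alpha)) = 0$ in $\Hom_\La(H^2(B;\La), H_2(B;\La))$.

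For the concluding step, I will use naturality to compute $\omega(\trf(\phi_*(c_*[M])))$. The transfer is induced by the coefficient inclusion $\bZ \hookrightarrow \hL$, so it commutes with $\phi_*$; combined with naturality of the cap product, this yields
\begin{equation*}
\omega(\trf(\phi_*(c_*[M]))) \;=\; \phi_* \circ \omega(\trf(c_*[M])) \circ \phi^*.
\end{equation*}
By Lemma \ref{lem:fourBone}(i), the Poincar\'e duality cap product $\omega(\trf(c_*[M]))$ encodes the equivariant intersection form $s_M$ on $\pi_2(M) = H_2(B;\La)$. The hypothesis that $\phi$ induces an isometry of $\fQ(M)$ is precisely the statement that this conjugation leaves $\omega(\trf(c_*[M]))$ invariant. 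Therefore $\omega(\trf(\alpha)) = \omega(\trf(c_*[M])) - \omega(\trf(\phi_*(c_*[M]))) = 0$, and injectivity gives $\alpha = 0$.

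The main obstacle I anticipate is the last identification: translating ``$\phi$ is an isometry of the sesquilinear form $s_M$'' into the $\La$-linear identity $\phi_* \circ \omega(\trf(c_*[M])) \circ \phi^* = \omega(\trf(c_*[M]))$. This is a bookkeeping exercise with the involution on $\La$ and cap-product conventions for twisted coefficients, but once set up cleanly, the rest of the argument follows immediately from Proposition \ref{prop:sixone}(iii) and Lemma \ref{lem:fourBone}.
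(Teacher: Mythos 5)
Your proof is correct and follows essentially the same route as the paper: confine $\alpha := c_*[M] - \phi_*(c_*[M])$ to the image of $H_4(\wB;\bZ)\otimes_\La \bZ$ via Proposition \ref{prop:sixone}(iii), then use Lemma \ref{lem:fourBone}(i) (naturality of cap product plus the isometry hypothesis) to show $\omega(\trf(\alpha)) = 0$, and conclude via the injectivity of $\omega \circ \trf$ from Lemma \ref{lem:fourBone}(iii). The paper states the central invariance $\omega(\trf(c_*[M])) = \omega(\trf(\phi_*(c_*[M])))$ without spelling out the cap-product naturality identity $\omega(\trf(\phi_*z)) = \phi_* \circ \omega(\trf z) \circ \phi^*$, but you are right that this, together with the commutative diagram in Definition \ref{def:isometry} relating $\beta^*$ and $\phi^*$, is exactly what makes the isometry hypothesis bite.
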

\begin{proof} By Proposition \ref{prop:sixone} (iii),  we have
$$\alpha:= c_*[M] - \phi_*(c_*[M]) \in \Image(H_4(\wB;\bZ)\otimes_\La \bZ \to H_4(B;\bZ)).$$
since $\phi$ is oriented. Since $\phi$ is an isometry of the quadratic $2$-type, 
Lemma \ref{lem:fourBone}(i) gives $\omega(\trf (c_*[M])) = \omega(\trf (\phi_*(c_*[M])))$, and Lemma \ref{lem:fourBone}(iii) implies that $c_*[M] = \phi_*(c_*[M])$.
\end{proof}

\section{Applications}\label{sec:fourA}

In this section we will describe a general process for establishing results like Theorem B.

\begin{theorem}\label{thm:main} Let $\pi$ be a discrete group of type $F$  with one end,   satisfying properties \textup{(W-AA)}.  Let
$M$ and $N$ be closed, smooth (topological), spin $4$-manifolds  with  fundamental group $\pi$,  and  isometric oriented  quadratic $2$-types.   
 If $M$ and $N$ are stably diffeomorphic (homeomorphic) and the  composite 
  $$H_4(\wB)\otimes_\La \bZ \xrightarrow{\ \trf \ }   H_4^\LF(\widetilde B)^\pi \xrightarrow{\ \omega\ } \Hom_{\La}(H^2(B;\La), H_2(B;\La))$$
 is injective, then $M\# r(S^2 \times S^2)$ is $s$-cobordant to $N\# r(S^2 \times S^2)$,
  provided that $r \geq \sr(\pi)$. 
\end{theorem}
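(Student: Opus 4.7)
My plan is to reduce the theorem to an application of Corollary \ref{cor:threethree} by establishing that $M$ and $N$ admit $B$-bordant normal $2$-smoothings in a common fibration $B \to BSTOP$ (resp.\ $BSO$ in the smooth case). Granting this, the $s$-cobordism conclusion after $r \geq \sr(\pi)$ stabilizations follows at once from that corollary.

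First I would use the oriented isometry $(\alpha, \beta, \phi)$ of Definition \ref{def:isometry} to identify the algebraic $2$-types $B(M)$ and $B(N)$ with a single $3$-coconnected target $B$, so that the classifying maps $c_M\colon M \to B$ and $c_N\colon N \to B$ are both $3$-connected normal $2$-smoothings. The orientation-preserving hypothesis ensures $\mu_M = \mu_N \in H_4(B, \wB)$.

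The main step is to prove the equality $[M, c_M] = [N, c_N]$ in $\Ospin_4(B)$. The stable diffeomorphism (or homeomorphism) $g\colon M\# r_0(S^2 \times S^2) \to N\# r_0(S^2 \times S^2)$ certainly yields an equality of spin bordism classes over $B$, but only after using $g$ itself to transport the reference map, i.e.\ comparing $c_{M'}$ on $M'$ with $c_{M'}\circ g^{-1}$ on $N'$. This pulled-back map differs from the canonical extension of $c_N$ by post-composition with a self-equivalence $\psi \in \hept{B}$; since both $g$ and $\phi$ preserve orientation and the quadratic $2$-type, $\psi$ must be an orientation-preserving isometry of $\fQ(N)$. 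By Corollary \ref{cor:sixtwo}---whose hypotheses are precisely packaged by the injectivity condition on $\omega \circ \trf$ assumed in the theorem---this forces $\psi_*(c_*[N]) = c_*[N]$ in $H_4(B;\bZ)$. Using the braid of Section \ref{sec:four}, this $H_4$-invariance then has to be promoted to $\beta(\psi) = 0 \in \Ospin_4(B)$: for a spin target the only additional contributions to the $\Ospin_4$-class arise from the signature (a homotopy invariant preserved by any $\psi$) and from $\cy 2$-homology terms controlled by $w_2$ (which vanish since $N$ is spin), so the promotion is a short Atiyah--Hirzebruch argument.

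With $[M, c_M] = [N, c_N]$ in hand, I would apply Corollary \ref{cor:threethree} to conclude. The step I expect to be trickiest is producing the self-equivalence $\psi$ cleanly---keeping track of how the diffeomorphism $g$ identifies the quadratic $2$-types of $M'$ and $N'$ relative to the fixed isometry $(\alpha,\beta,\phi)$---together with the subsequent upgrade from $H_4$-invariance under $\psi$ to full bordism-invariance in $\Ospin_4(B)$ using the spin hypothesis on $N$.
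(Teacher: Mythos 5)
Your overall strategy is genuinely different from the paper's, and the difference matters. You want to show $[M,c_M]=[N,c_N]$ in $\Ospin_4(B)$ and then invoke Corollary~\ref{cor:threethree} directly. The paper instead never establishes this bordism equality: it realizes the discrepancy self-equivalence $\phi$ of $B_t$ by a self-bordism $(W,F)$ of the stabilized manifold $M_t$ itself, corrects its surgery obstruction using Corollary~\ref{cor:threethree}, and then carefully splices the resulting $s$-cobordism $W'$ into the middle level of a handlebody decomposition of the given spin bordism $V$ (steps (x)--(xi)), where an explicit algebraic cancellation of $2$- and $3$-handles is verified. That final cancellation step has no analogue in your plan, and it is what lets the paper avoid committing to the stronger claim that $M$ and $N$ are $B$-bordant with their canonical reference maps.

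The concrete gap in your proposal is the ``promotion'' from $\psi_*(c_*[N])=c_*[N]$ in $H_4(B;\bZ)$ to $\beta(\psi)=0$ in $\Ospin_4(B)$. You assert that the lower Atiyah--Hirzebruch filtration quotients are ``$\cy 2$-homology terms controlled by $w_2$'' which ``vanish since $N$ is spin.'' That is not correct: the relevant $E^2$-terms are $H_2(B;\cy 2)$ and $H_3(B;\cy 2)$, which are invariants of the Postnikov stage $B$ (so involve $H_*(\pi;\cy 2)$ and $\pi_2\otimes\cy 2$), not of $M$ or $N$; the spin hypothesis does not make them vanish, and a self-equivalence $\psi$ covering a nontrivial isometry of $\pi_2$ can act nontrivially there. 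The paper's route around exactly this difficulty occupies steps (v)--(vii): it first uses \cite[Theorem 1.1]{Hambleton:2018} to realize $\phi$ by a homotopy self-equivalence $g$ of $M_t$ (this already uses the $H_4$-invariance), then uses the injectivity of $\kappa_2$ from the \textup{(W-AA)} hypothesis together with the Kirby--Taylor pinch-map realization \cite[Theorem 19]{Kirby:2001} to arrange $\alpha(g)=0\in\rOspin_4(M_t)$, and finally reads off $\beta(\phi)=0$ from commutativity of the braid. None of this is a short Atiyah--Hirzebruch computation, and it is precisely where the hypotheses \textup{(W-AA)} and the braid machinery of Section~\ref{sec:four} earn their keep. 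A secondary imprecision: the correcting self-equivalence you call $\psi$ naturally lives in $\hept{B_t}$ for the stabilized $2$-type $B_t$, not in $\hept{B}$; the passage back to $B$ needs to be tracked (it can be done using the split inclusion $B\hookrightarrow B_t$, but you should say so).

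In short, your reduction to Corollary~\ref{cor:threethree} would be a real simplification if it worked, but as written the key step is asserted rather than proved, and the assertion as stated is false. To repair it you would need essentially the same ingredients the paper deploys (Hambleton--Kreck realization, \textup{(W-AA)}, Kirby--Taylor, and the braid), at which point the paper's handlebody-splicing argument is arguably the more transparent bookkeeping.
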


We will discuss the topological case, and note that the arguments in the smooth case follow the same steps.
If $M$ and $N$ are stably homeomorphic, then we can construct  a $5$-dimensional spin  bordism   $(V; M, N)$ between $M$ and $N$ over $K(\pi,1)$ (with respect to compatible spin structures). By \cite[Chapter 9]{Freedman:1990}, there is a topological handlebody structure on $V $ relative to its boundary. 

As in the proof of the $h$-cobordism theorem, we may assume that $V$ consists of $2$-handles and $3$-handles, so that at a middle level $V_{1/2} \approx M \# t (S^2 \times S^2) \approx N\# t(S^2 \times S^2)$, for some $t \geq 0$.

\begin{proof} Here are the remaining steps in the proof.
\begin{enumerate}
\setlength{\itemsep}{0pt plus 10pt}
\item\label{itm:one}  Let  $\theta\colon \quadtypeMb \xrightarrow{\cong} \quadtypeNb$ be an orientation-preserving isometry of the  quadratic $2$-types of $M$ and $N$, and use it together with a given isomorphism of their fundamental groups to identify their algebraic $2$-types $B := B(M) = B(N)$.

\item\label{itm:two} 
Let $h\colon N \# t (S^2 \times S^2) \to M\# t(S^2 \times S^2)$ be a stable orientation-preserving homeomorphism, with
$$h_*\colon \pi_2(N) \oplus H(\La^t) \xrightarrow{\cong}  \pi_2(M) \oplus H(\La^t) $$
the induced isometry of their stabilized equivariant intersection forms. We may assume that the $k$-invariants are preserved.  
Let $M_t := M \# t(S^2 \times S^2) $ and let $B_t$ denote the stabilized algebraic $2$-type for $ M \# t(S^2 \times S^2) $ and
$ N \# t(S^2 \times S^2) $.

\item\label{itm:three} Let $\gamma := h_* \circ (\theta \oplus \id_t)$ be the induced oriented self-isometry of the stabilized quadratic $2$-type of $M_t$, where $\id_t \colon H(\La^t)\to H(\La^t)$ denotes the identity map on the added hyperbolic summand. Then there exists a homotopy self-equivalence $\phi \colon B_t \to B_t$ such that $c_*^{-1}\circ \phi_*\circ c_* = \gamma$.

\item\label{itm:four}  By Proposition \ref{prop:sixone}(iii), we have
$$\alpha:=\phi_*(c_*[M_t]) - c_*[M_t] \in \Image\{H_4(\wB_t;\bZ) \to H_4(B_t;\bZ)\}, $$
since $\phi$ induces an oriented isometry of the quadratic $2$-type. Moreover, since the composite $\omega\,\circ \,\trf $ is injective (by assumption),  it follows from Lemma \ref{lem:fourBone}(i)  that  $\phi_*(c_*[M_t]) = c_*[M_t] \in H_4(B_t;\bZ)$.

\item\label{itm:foura} By \cite[Theorem 1.1]{Hambleton:2018}, there exists a homotopy self-equivalence $g\colon M_t\to M_t$ such that $c \circ g \simeq \phi \circ c$. Since $\kappa_2\colon H_2(\pi;\cy 2) \to L_4(\Zpi)$  is injective (by condition (W-AA)), the normal invariant $\eta(g) \in H_2(M_t;\cy2)$ lies in $\ker\{H_2(M_t;\cy2) \to H_2(\pi;\cy2)\}$. 
By \cite[Theorem 19]{Kirby:2001}, after composing $g$ with suitable  self-equivalences given by pinch maps inducing the identity on $\pi_2(M_t)$, we may assume that the normal invariant $\eta(g) \in H_2(M_t;\cy2)$ vanishes. Therefore $(M, g)$ is normally cobordant to $(M,\id)$ and we have 
$$\alpha(g) = 
 [M_t, f] - [M_t,\id] = 0 \in \rOspin_4(M_t).$$

\item\label{itm:seven} We use the braid for the stabilized $M_t$ and its $2$-type $B_t$ to show that $[\phi]$ is the image of an element $[(W, F) ]\in \htildeMt$ under the map $\delta\colon \htildeMt \to \hept{B_t}$. The image of $[(W,F)]$ in $\hept{M_t}$ in the braid is represented by the self-equivalence $g:= F|_{\bd_{+} W}\colon M_t \to M_t$. Note that  $[g] \mapsto [\phi] \in \hept{B_t}$ under the map $\hept{M_t}\to \hept{B_t}$ in the braid, so that $ g_* = h_* \circ (\theta \oplus \id_t)$.

\item\label{itm:eight}  There is an exact sequence:
$$L_6(\Zpi) \to  \hMt \to \htildeMt \to L_5(\Zpi)$$
and the map $ \htildeMt \to L_5(\Zpi)$ is given by the (modified) surgery obstruction of the map $F\colon W \to M_t\times I$, relative to the boundaries (see \cite[p.~163]{Hambleton:2004}).

\item\label{itm:nine}We now apply Corollary \ref{cor:threethree} to $(W, F)$, regarded as a bordism from the normal $2$-smoothing  $\id\colon M_t \to M_t$ to itself, over the reference map $F\colon W \to M$. For any given $r \geq \sr(\pi)$, we can realize an element $[\alpha_r] = -\sigma(F) \in L_5(\Zpi)$, with  $\alpha_r \in SU_r(\La)$, by a stabilized normal cobordism,  and attach it to $(W,F)$ along $M_t \# r(S^2\times S^2)= \bd_{+} W  \# r(S^2\times S^2)$   (see the proof of \cite[Theorem 3.1]{Cappell:1971}). 

The resulting cobordism has zero surgery obstruction, so after  performing surgery (relative to the boundary),  the result is an $s$-cobordism $(W', F')$ of $M_t \# r(S^2\times S^2)$. By construction, $F' |_{\bd_{-} W} = \id_{M_t \# r(S^2\times S^2)}$ and 
$$F' |_{\bd_{+} W} = f\circ g\colon M_t \# r(S^2\times S^2) \to M_t \# r(S^2\times S^2),
$$
where $(M_t \# r(S^2\times S^2), f)$ is a (simple) homotopy self-equivalence, such that $f_*$ induces the identity on $\pi_2(M_t)$.

 \item\label{itm:ten} We now return to decompose the spin bordism between $M$ and $N$ as follows:
 $$ V =  M\times [0,1/4] \cup \text{\{2-handles\}} \cup \text{\{3-handles\}} \cup N \times [3/4, 1]$$
 As above, let   $V_{1/2} $ denote a middle level containing no critical points, so that the $2$-handles are all attached below $V_{1/2} $, and the $3$-handles attached all above $V_{1/2}  $. 
 
 Let 
 $V = V[0,1/2] \cup V[1/2,1]$ denote the lower and upper parts of $V$, joined along their common boundary $V({1/2})$ by the stable homeomorphism 
 $$h \colon M \# t (S^2 \times S^2) \to  N\# t(S^2 \times S^2)$$
  used in the steps above.  We then stabilize $V$ to $V'$ by connected sum with $r(S^2 \times S^2 \times [0,1])$ along  small disjoint embeddings of $D^4 \times [0,1] \subset V$, so that $\bd_{-} V' =  M_r:= M \# r(S^2 \times S^2)$ and $\bd_{+} V' =  N_r:= N \# r(S^2 \times S^2)$. We now have the stabilized decomposition
 $$V' =  V'[0,1/2] \cup V'[1/2,1],$$
 where 
 $\bd_{+} V'[0,1/2] = M_t \# r(S^2 \times S^2)$ and $\bd_{-} V'[1/2,1] = N_t  \# r(S^2 \times S^2) $.  The final step is to  glue  the $s$-cobordism $(W',F')$ in between the two halves to produce
 $V'' = V'[0,1/2] \cup W' \cup  V'[1/2,1]$, with $\bd_{\pm}  V'' = \bd_{\pm} V'$.
 
 \item\label{itm:eleven} We claim that $V''$ is an $s$-cobordism from $M_r$ to $N_r$. To see this, we check that the new attaching maps of the $3$-handles cancel the ascending $2$-handles. To keep track of the induced maps, let $L _M= \pi_2(M)$, 
 $L _N= \pi_2(N)$, $H_t = H(\La^t)$ and $H_r = H(\La^r)$. Then
 $$\pi_2(M_t \# r(S^2 \times S^2)) = L_M\oplus H_t \oplus H_r.$$
  The bordisms $V'[0,1/2]\cup W' $ and $ V'[1/2,1]$ are glued together along $\bd_{-} V'[1/2,1]$ by attaching the $3$-handles. The attaching maps are algebraically determined by the induced map on homology;
 $$ (h^{-1}_* \oplus  \id_r) \circ f_* \circ (g_* \oplus \id_r) \colon L_M \oplus H_t \oplus H_r \to L_N \oplus H_t \oplus H_r.$$
 Since $f_*$ induces the identity on $\pi_2(M_t)= L_M \oplus H_t$, and $h_*^{-1} \circ g_* = \theta \oplus \id_t$. we have
 $$  \left ((h^{-1}_* \oplus  \id_r) \circ f_* \circ (g_* \oplus \id_r)\right )(u, v, 0) = (\theta (u), v, 0),$$
 for all $ (u, v, 0) \in L_M \oplus H_t \oplus H_r $. 
 
 This formula shows that the $3$-handles from   $V'[1/2,1]$ (the upper half ) algebraically cancel  the $2$-handles  from  $V'[0,1/2]$ (the lower half), and these together give a standard hyperbolic base for  the summand $H_t$. Hence $V''$ is an $s$-cobordism between $M_r$ and $N_r$, and the proof of Theorem \ref{thm:main} is complete.
\end{enumerate}
\end{proof}

\begin{proof}[The proof of Theorem B] If $\pi = \pi_1(M)$ is the fundamental group of a closed, oriented aspherical $3$-manifold, then the Farrell-Jones conjectures hold for  $\pi$  (see \cite[Corollary 1.3]{Bartels:2014}) and $\pi$ has the properties (W-AA).
 Moreover, 
 $\gdim(\pi) = 3$, $H^1(\pi;\La) = 0$  and $H^3(\pi;\La) = \bZ$.
 
  By \cite[Lemma 6.1]{Hambleton:2019} we know that $\pi_2(M)^*$ is a stably free $\La$-module, and since $H^2(\pi;\La) =0$, we have a short exact sequence
$$ 0  \to H^2(M;\La) \to \Hom_\La(\pi_2(M), \La) \to H^3(\pi;\La) \to 0$$
which is isomorphic (by Shanuel's Lemma) to
$$0 \to I(\pi) \oplus F_0 \to \La \oplus F_0 \to \bZ \to 0,$$
 after stabilization if necessary, where $I(\pi)$ denotes the augmentation ideal of $\Zpi$,  $F_0$ is a (stably) free, finitely generated $\La$-module, and $L:=   I(\pi) \oplus F_0 $ is a stabilization of $\pi_2(M) \cong H^2(M;\La)$.
 Let $F = \La \oplus F_0 $ so that $L = I(\pi) \oplus F_0$ embeds in $F$ with quotient $\bZ$. In  particular, $\pi_2(M)$ is torsionless. 
 
By \cite[Proposition 4.1]{Hambleton:2019}, the fundamental group $\pi$ has tame cohomology.  It is now easy to verify the other conditions of Lemma \ref{lem:fourBone} needed to apply Theorem \ref{thm:main}.  
 
  In order to check that $\pi_2(M)$ is strongly torsionless, it is enough to show that the induced map
   $ \Gamma(L )\otimes_\La \bZ \to   \Gamma(F) \otimes_\La \bZ$ is injective, since this is a stable condition. From the additivity formula, we have a commutative diagram of $\La$-modules:
   $$\xymatrix{\Gamma(L) \ar[r]^(0.2)\cong \ar[d] & \Gamma(I(\pi)) \oplus \Gamma(F_0) \oplus I(\pi)\otimes_\bZ F_0 \ar[d]\\
   \Gamma(F) \ar[r]^(0.2)\cong & \Gamma(\La) \oplus \Gamma(F_0) \oplus \La\otimes_\bZ F_0
   }$$
   Since the additive decompositions are natural, we can consider the vertical maps separately. 
   By \cite[Lemma 2.3]{Hambleton:1988a}, there is a $\La$-isomorphism $ \Gamma(I(\pi)) \oplus \La \cong \Gamma(\La)$, so the first vertical map is split injective. The middle vertical maps is the identity, and the third vertical map is again a split injection over $\La$ since the sequence
   $$ 0 \to I(\pi)\otimes_\bZ F_0 \to \La\otimes_\bZ F_0 \to \bZ \otimes_\bZ F_0 \to 0$$
   is an exact sequence of free $\La$-modules. 
    Hence 
   the induced map $ \Gamma(L )\otimes_\La \bZ \to   \Gamma(F) \otimes_\La \bZ$ is injective, and $L$ is strongly torsionless.
   \end{proof}
\begin{example} The assumptions of Theorem \ref{thm:main} apply to stabilizations of aspherical $4$-manifolds, such as $M =  T^4 \Sharp r(S^2 \times S^2)$, but not to stabilizations of $M = T^2 \times S^2$. \end{example}

\section{The main results of \cite{Hambleton:2019}  corrected}\label{sec:five}

To correct the statements and proofs of Theorem A  and Theorem 11.2 in \cite{Hambleton:2019}, we use the new stable range conditions. 
For the main result below: we need to assume that  $\pi$ has type $F_3$ in addition to $\cd(\pi) \leq 3$. This amounts to assuming $\gdim(\pi) \leq 3$.

\begin{thma} Let $\pi$ be a \RAAG\ defined by a graph $\Gamma$ with no $4$-cliques.
 Suppose that $M$ and $N$ are closed,
  \spinp, topological $4$-manifolds with fundamental group $\pi$. Then any isometry between the quadratic $2$-types of $M$ and $N$ is stably realized by an $s$-cobordism between $M \Sharp r(S^2 \times S^2)$ and $N \Sharp r(S^2 \times S^2)$, whenever $r \geq \max\{b_3(\pi), 6\}$.
\end{thma}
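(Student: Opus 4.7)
The plan is a two-stage stabilization argument. First, the isometry $\theta$ of quadratic $2$-types is upgraded to a normal $B$-bordism at the cost of $b_3(\pi)$ copies of $S^2\times S^2$; second, the resulting bordism is upgraded to an $s$-cobordism at the cost of $\sr(\pi)\leq 6$ further copies. Both stages can be carried out inside a single stabilization of size $r\geq\max\{b_3(\pi),6\}$, yielding the stated bound.

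I begin by checking that $\pi$ satisfies all hypotheses of Theorem~\ref{thm:main}. Absence of $4$-cliques in $\Gamma$ forces $\gdim(\pi)\leq 3$, so $\pi$ is torsion-free, of type $F$, and (for connected $\Gamma$) one-ended; Remark~\ref{rem:threefour} gives properties (W-AA) via Farrell--Jones for RAAGs, and Corollary~\ref{cor:raag} gives $\sr(\pi)\leq 6$. The tame cohomology conditions of Definition~\ref{def:tamecoh} and the strong torsionlessness of a stabilization of $\pi_2(M)$ are established in the same spirit as in the proof of Theorem~B: since $\gdim(\pi)\leq 3$, the Salvetti-type resolution shows that $H^3(\pi;\La)$ is finitely generated free over $\La$ while $H^2(\pi;\La)$ is free of finite rank; a Schanuel argument then presents $\pi_2(M)^*$ stably as $I(\pi)\oplus F_0$, to which the split additivity of the Whitehead $\Gamma$-functor applies to produce the injection $\Gamma(L)\otimes_\La\bZ\hookrightarrow\Gamma(F)\otimes_\La\bZ$ needed in Lemma~\ref{lem:fourBone}(iii).

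Next, the normal $B$-bordism step. Use $\theta$ to identify $B:=B(M)=B(N)$, and lift it to an orientation-preserving self-equivalence $\phi\colon B\to B$ via the $\hept{B}$-sequence and Proposition~\ref{prop:sixone}(iv). In the \spinp\ version of Kreck's modified surgery, the obstruction to existence of a normal $B$-bordism between $M$ and $N$ lies in a quotient of $\rOspin_4(B)$. The Atiyah--Hirzebruch spectral sequence, combined with $\gdim(\pi)\leq 3$ (so $H_k(\pi;\cy 2)=0$ for $k\geq 4$), identifies the nontrivial part of this obstruction with a class in $H_3(\pi;\cy 2)$; the remaining signature- and $k$-invariant data are already matched by $\theta$. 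Each $3$-clique of $\Gamma$ contributes one $\cy 2$-generator to $H_3(\pi;\cy 2)$, coming from a $3$-torus inside $K(\pi,1)$, and each such generator is absorbed by a single $S^2\times S^2$ stabilization via the standard Wall-realization of a degree-one normal map. Hence after $b_3(\pi)$ stabilizations, $M$ and $N$ admit a normal $B$-bordism $(W,\bar\nu)$.

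Finally, apply the \spinp\ analog of Proposition~\ref{prop:threetwo}: the assembly map surjectivity from (W-A) together with $\sr(\pi)\leq 6$ allows us to adjoin an inertial normal cobordism that cancels $\Theta(W,\bar\nu)\in\ell_5(\pi)$, at the cost of $\leq 6$ additional $S^2\times S^2$ summands, producing the desired $s$-cobordism. The main obstacle I expect is the compatibility bookkeeping: verifying that $\Theta(W,\bar\nu)$ lies in the image of $L_5(\Zpi)\to\ell_5(\pi)$ so that the stable-range realization of Proposition~\ref{prop:threetwo} applies. By the discussion at the end of Section~\ref{sec:two}, this requires $\phi_*(c_*[M])=c_*[M]\in H_4(B;\bZ)$, which follows from Corollary~\ref{cor:sixtwo} combined with the strong torsionlessness established above. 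A secondary issue is confirming that the \spinp\ modifications to the relevant bordism theory do not disrupt the Atiyah--Hirzebruch count yielding $b_3(\pi)$; this is exactly where the no-$4$-cliques hypothesis is essential, as it kills $H_4(\pi;\cy 2)$ and all higher contributions.
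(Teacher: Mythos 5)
The paper's own proof of this statement is a one-sentence reduction: it cites the proof of \cite[Theorem 11.2]{Hambleton:2019} and says only that the final step is to be replaced by an application of Corollary~\ref{cor:threethree}, which is where the bound $\sr(\pi)\leq 6$ enters. Your attempt to reconstruct that argument from scratch contains several genuine gaps.

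First, your route passes through Theorem~\ref{thm:main}, whose hypotheses include tame cohomology (Definition~\ref{def:tamecoh}) and strong torsionlessness of $\pi_2(M)$. You assert that these hold because ``the Salvetti-type resolution shows that $H^3(\pi;\La)$ is finitely generated free over $\La$ while $H^2(\pi;\La)$ is free of finite rank,'' but this is false: already for $\pi=\bZ^3$ (a RAAG with no $4$-cliques), $H^3(\pi;\La)\cong\bZ$ is the trivial module and is \emph{not} free over $\La$. More importantly, the paper explicitly states at the end of \S\ref{sec:five} that it is \emph{not known} whether every RAAG with $\cd(\pi)=3$ has tame cohomology, so the hypotheses of Theorem~\ref{thm:main} cannot be verified at this level of generality, and the route through Corollary~\ref{cor:sixtwo} is unavailable here. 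The actual argument of \cite[Theorem 11.2]{Hambleton:2019} must avoid this dependence.

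Second, your first-stage mechanism is incorrect. You claim that the obstruction to a normal $B$-bordism, living in $\rOspin_4(B)$ with a contribution from $H_3(\pi;\cy 2)$, can be killed one $\cy 2$-generator at a time by connected sums with $S^2\times S^2$. But $S^2\times S^2$ is $B$-nullbordant for every $1$-smoothing under consideration, so $[M\Sharp(S^2\times S^2),\bar\nu]=[M,\bar\nu]$ in the relevant bordism group; stabilizing does nothing to this obstruction. The role of $b_3(\pi)$ in the original theorem does not come from neutralizing an $H_3(\pi;\cy 2)$-valued bordism obstruction in this way.

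Third, even granting both stages, your argument as written gives $r\geq b_3(\pi)+\sr(\pi)$, a sum, while the statement requires $r\geq\max\{b_3(\pi),\sr(\pi)\}$; you assert in the opening paragraph that ``both stages can be carried out inside a single stabilization of size $r\geq\max\{b_3(\pi),6\}$'' but never justify how the two sets of $S^2\times S^2$ summands are shared. This is precisely the kind of bookkeeping that the gluing argument in the proof of Theorem~\ref{thm:main} (steps~\ref{itm:ten}--\ref{itm:eleven}) handles carefully, and it needs to be spelled out rather than declared.
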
 

 This  is a consequence of the main result  \cite[Theorem 11.2]{Hambleton:2019}, with a corrected stability bound from applying Corollary \ref{cor:threethree} in the last step of the proof. If $\cd(\pi) \leq 2$, then no stabilization is needed for this result and the next (see \cite[Theorem C]{HKT09}).

\begin{thmd}
 Let $\pi$ be a discrete group  with $\gdim(\pi) \leq 3$
  satisfying the properties \textup{(W-AA)}.
 If $M$ and $N$ are closed, oriented, \spinp, topological $4$-manifolds with fundamental group $\pi$, then any isometry between the quadratic $2$-types of $M$ and $N$ is stably realized by an $s$-cobordism between $M \Sharp r(S^2 \times S^2)$ and $N \Sharp r(S^2 \times S^2)$, for $r \geq \max\{b_3(\pi), \sr(\pi)\}$.
\end{thmd}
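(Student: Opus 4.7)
The plan is to adapt the original proof of \cite[Theorem 11.2]{Hambleton:2019} verbatim up to its final step, and then replace that step with Corollary \ref{cor:threethree}. Given an orientation-preserving isometry between the quadratic $2$-types of $M$ and $N$, I first identify their algebraic $2$-types as a common Postnikov $2$-stage $B$ and use the \spinp\ structures to produce compatible normal 1-smoothings in a fibration $B \to BSTOP$. The bordism computation of \cite{Hambleton:2019}, which uses the $\cy 2$-rank $b_3(\pi)$ of $H_3(\pi;\cy 2)$ as the controlling input for the relevant Atiyah-Hirzebruch filtration, shows that after stabilization by $b_3(\pi)$ copies of $S^2\times S^2$ the two 1-smoothings become normally $B$-bordant, producing a bordism $(W, \bar\nu)$ to which modified surgery applies.

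Since the reference maps can be arranged to be $3$-connected (a normal $2$-smoothing), Kreck's criterion \cite[Prop.~8, p.~739]{Kreck99} places the surgery obstruction $\Theta(W, \bar\nu) \in \ell_5(\pi)$ in the image of $L_5(\Zpi) \to \ell_5(\pi)$. The essential correction is in how this $L_5$-obstruction is eliminated. The prior argument tried to realize it as the surgery obstruction of a degree one inertial normal cobordism of $M$ via the partial assembly map $H_5(M;\bbL_\bullet) \to L_5(\Zpi)$. As the Caveat in Section \ref{sec:two} explains, this is inadequate when $\cd(\pi) = 3$: by Poincar\'e duality the map $H_3(M;\cy 2) \to H_3(\pi;\cy 2)$ is the cap product with $c_*[M] = 0 \in H_4(\pi;\cy 2)$, hence vanishes, and a subgroup of potential $L_5$-obstructions cannot be reached by inertial cobordisms of $M$ alone.

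The fix is to invoke Corollary \ref{cor:threethree} directly. Under (W-AA) the assembly map $A_5(\pi)$ is surjective, and Proposition \ref{prop:threetwo} shows that $\sr(\pi)$ additional connected-sum stabilizations with $S^2\times S^2$ allow us to realize an arbitrary element of $L_5(\Zpi)$ as the surgery obstruction of a degree one normal self-cobordism of the stabilized boundary, via the Cappell-Shaneson stable Wall realization \cite[Theorem 3.1]{Cappell:1971}. Choosing this element to be $-\Theta(W, \bar\nu)$ and gluing the resulting cobordism onto $(W,\bar\nu)$ (stabilized compatibly) yields a $B$-bordism with trivial surgery obstruction, hence $B$-bordant rel boundary to an $s$-cobordism between $M \Sharp r(S^2\times S^2)$ and $N \Sharp r(S^2\times S^2)$. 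Because both stabilization steps add $S^2\times S^2$ summands to the same boundary components, we can use the common value $r \geq \max\{b_3(\pi), \sr(\pi)\}$ throughout; the larger choice subsumes the smaller. The main obstacle is the bookkeeping at the interface of the two stabilization steps, namely verifying that the stable Wall realization of Proposition \ref{prop:threetwo} can be performed in small product collars disjoint from the hyperbolic summands already attached for the bordism step; this is immediate from the local nature of the Cappell-Shaneson construction.
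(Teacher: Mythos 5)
Your proposal matches the paper's own treatment: the paper's proof of Theorem 11.2 is precisely to cite \cite[Theorem 11.2]{Hambleton:2019} and replace the final, faulty obstruction-realization step (which is explained by the Caveat in Section \ref{sec:two}) with an application of Corollary \ref{cor:threethree}, using Proposition \ref{prop:threetwo} and the Cappell-Shaneson stable Wall realization to kill the modified surgery obstruction in $L_5(\Zpi)$ after $\sr(\pi)$ stabilizations. One small caution: the justification that $r \geq \max\{b_3(\pi),\sr(\pi)\}$ (rather than the sum) suffices rests on the fact that the hyperbolic summands already added for the bordism step can be reused by the Cappell-Shaneson realization; this is what the construction in Proposition \ref{prop:threetwo} actually uses, but your remark that ``the larger choice subsumes the smaller'' could be made more explicit on this point.
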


\begin{remark} Note that we  obtain $s$-cobordisms after  connected sum with a  \emph{uniformly bounded} number of  copies of $S^2 \times S^2$, where  the bound depends only on the fundamental group. In contrast, ``stable classification" results might require an unbounded number of stabilizations as the manifolds $M$ and $N$ vary. \end{remark}

The stable classification result,  \cite[Theorem B]{Hambleton:2019}, is not affected:  two closed, oriented \spinp, topological $4$-manifolds with $\cd(\pi) \leq 3$ are stably homeomorphic if and only if  their equivariant intersection form are stably isometric. For the restricted class of \spinp manifolds, this extends the stable classification obtained in \cite{Kasprowski:2017} for fundamental groups of closed, oriente, aspherical $3$-manifolds  to more general fundamental groups.

\begin{remark} 
The proof of \cite[Lemma 5.15]{HKT09} implicitly assumes that $\Hom_\La(H^2(\pi; \La), \La) = 0$. This can be justified since a group $\pi$ with  $\cd (\pi) \leq 2$ has tame oohomology by \cite[Proposition 4.1 and Lemma 4.4]{Hambleton:2019}.  At present we do not know whether every discrete group $\pi$ (or even every \RAAG) with $\cd(\pi) =3$ has tame cohomology (see \cite[Remark 3.2]{Hambleton:2019} for an example with $\cd(\pi) = 4$).
\end{remark}

\section{Appendix: Locally finite and end homology}\label{sec:seven}
Let $X$ be a closed, oriented, topological $n$-manifold with $\pi_1(X) = G$ infinite. The universal covering $\wX$ is a non-compact $n$-manifold, and we have two versions of Poincar\'e duality expressed in the following diagram:
$$\xymatrix{ H_c^q(\wX;\bZ) \ar[r]\ar[d]^{D}_\cong & H^q(\wX;\bZ)\ar[d]^{D}_\cong\\
H_{n-q}(\wX;\bZ) \ar[r] & H^{{\LF}}_{n-q}(\wX;\bZ)
}$$
where the duality map is induced by cap product with the transfer 
$$\trf[X] \in H_n^{\LF}(\wX;\bZ)$$
of the fundamental class of $X$ into the locally finite homology of its universal covering. 
The first version is a special case of the general Poincar\'e duality theorem
$$\cap\, [X] \colon H^q(X; L) \to H_{n-q}(X; L)$$
valid for any $\Lambda:= \bZ G$-module $L$. If we take $L = \Lambda$, then 
$$ H^q(X; \Lambda) \cong H_c^q(\wX;\bZ) \quad {\rm and} \quad H_{n-q}(X; \Lambda) \cong H_{n-q}(\wX; \bZ).$$
To express the second version (which involves locally finite homology) in these terms, we define 
$$\hL = \{ \sum n_g \cdot g \vv {\rm \ for\ } g \in G, {\rm \ and\ } n_g \in \bZ\}$$
as the formal (possibly infinite) integer linear sums of group elements. Then $\La \subset \hL$ and $\hL$ is a $\La$-module by formal multiplication

$$\big (\sum_g  n_g g\big ) \big (\sum_h m_h h\big ) = \sum_x \big (\sum_g n_gm_{g^{-1}x}\big ) x$$
which is defined since the coefficients $\{n_g\}$ in $\La$ are only non-zero for finitely many group elements. Note that $\hL = \Hom_\bZ(\La, \bZ)$ is a \emph{co-induced module} (see \cite[p.~67]{Brown:1994}).

From the general Poincar\'e duality theorem we have
$$\cap\, [X] \colon H^q(X; \hL) \xrightarrow{\hphantom{x}\cong\hphantom{x}} H_{n-q}(X; \hL)$$
and we claim that this recovers the second version of non-compact duality  for $\wX$ given above.

\begin{proposition}\label{prop:sevenfive} For any right $\La$-module $L$, there is an isomorphism $\Hom_\bZ(L , \bZ) \cong \Hom_\La(L, \hL)$ of $\La$-modules, which is natural
with respect to $\La$-maps $L \to L'$.
\end{proposition}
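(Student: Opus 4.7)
The plan is to recognize $\hL$ as the coinduced module $\Hom_\bZ(\La, \bZ)$ and deduce the proposition as an instance of the standard Frobenius (coinduction--restriction) adjunction
\[
\Hom_\La(L, \Hom_\bZ(\La, \bZ)) \cong \Hom_\bZ(L, \bZ).
\]

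First I would set up the identification $\hL \cong \Hom_\bZ(\La, \bZ)$ of abelian groups by sending a formal sum $a = \sum_g n_g \cdot g$ to its coefficient function $\hat a\colon g \mapsto n_g$. A short direct computation from the definition of formal multiplication shows that the right $\La$-action on $\hL$ corresponds under this identification to the action $(f \cdot h)(x) = f(xh^{-1})$ on functions (extended $\bZ$-linearly to all of $\La$): for $h \in G$ the coefficient of $x$ in $a \cdot h$ is $n_{xh^{-1}} = \hat a(xh^{-1})$. This matches the natural right $\La$-module structure on $\Hom_\bZ(\La,\bZ)$ coming from left multiplication of $\La$ on itself.

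Next I would construct the isomorphism of the proposition explicitly. Given $\phi \in \Hom_\bZ(L, \bZ)$, define $\Phi_\phi \in \Hom_\La(L, \hL)$ by declaring that $\Phi_\phi(x)$ is the function $g \mapsto \phi(x g^{-1})$; right $\La$-linearity follows from the calculation
\[
\Phi_\phi(xh)(g) = \phi(x h g^{-1}) = \Phi_\phi(x)(gh^{-1}) = (\Phi_\phi(x)\cdot h)(g).
\]
The inverse sends a $\La$-map $\Phi\colon L\to \hL$ to $\mathrm{ev}_e\circ \Phi\colon x \mapsto \Phi(x)(e)$, i.e.\ reads off the coefficient of the identity in $\Phi(x)$. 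Both composites reduce to the identity by direct substitution, using $\La$-linearity of $\Phi$ to rewrite $\Phi(x g^{-1})(e) = (\Phi(x)\cdot g^{-1})(e) = \Phi(x)(g)$. Naturality in $L$ is automatic, since both assignments are built by post-composition with fixed $\bZ$- or $\La$-linear maps and therefore commute with pullback along any $\La$-homomorphism $L \to L'$.

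There is no essential obstacle; the only care required is bookkeeping with inverses, since the right action of $h\in G$ on functions pulls back by $h^{-1}$, so the defining formula $\Phi_\phi(x)(g) = \phi(xg^{-1})$ must be written with $g^{-1}$ (rather than $g$) in order for right $\La$-linearity to hold. Once the conventions are aligned, the statement is simply the observation that evaluation at the identity is the counit of the coinduction adjunction and is a $\bZ$-isomorphism onto $\Hom_\bZ(L,\bZ)$.
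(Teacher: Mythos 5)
Your explicit maps are exactly the ones used in the paper: your $\Phi_\phi(x)(g)=\phi(xg^{-1})$ is the paper's $\hat f(x)=\sum_g f(xg^{-1})\,g$, and your ``evaluate at $e$'' is the paper's $\varepsilon_1$. Recognizing these as the unit and counit of the coinduction--restriction adjunction is a clean way to see why the formulas work, so the framing is nicer but the proof content is the same.

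There is, however, one genuine omission. The proposition asserts an isomorphism \emph{of $\La$-modules}, not merely a bijection of abelian groups natural in $L$. Both hom-groups carry a left $\La$-module structure: $\Hom_\bZ(L,\bZ)$ via $(h\cdot f)(x)=f(xh)$ (dual of the right action on $L$), and $\Hom_\La(L,\hL)$ via $(h\cdot\varphi)(x)=h\,\varphi(x)$ (post-composition with the left $\La$-action on the bimodule $\hL$). You verify bijectivity and naturality in $L$, but never check that $u$ and $v$ intertwine these left actions, i.e.\ $\widehat{h\cdot f}=h\cdot\hat f$ and $\varepsilon_1(h\cdot\varphi)=h\cdot(\varepsilon_1\varphi)$. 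This is a short computation, but it is precisely what the word ``$\La$-module'' in the statement requires, and the abstract invocation of the adjunction does not hand it to you for free without specifying the bimodule structures carefully. A minor side remark: your parenthetical that the right action $(f\cdot h)(x)=f(xh^{-1})$ ``comes from left multiplication of $\La$ on itself'' is misstated---that formula arises from \emph{right} multiplication on $\La$ composed with the group-ring involution $g\mapsto g^{-1}$; left multiplication would give $(f\cdot h)(x)=f(hx)$. The formula you actually use is the correct one, so this does not damage the argument, but the justification as written is wrong.
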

\begin{proof} We define a map $u \colon \Hom_\bZ(L , \bZ) \to \Hom_\La(L, \hL)$ by the formula $f \mapsto \hat f$, where
$$\hat f(x) = \sum_g f(xg^{-1})g \in \hL$$
for any $f \in  \Hom_\bZ(L , \bZ) $. Then $\hat f(xh) = \hat f(x)h$, for all $h \in G$. We define a map  $v\colon \Hom_\La(L, \hL) \to \Hom_\bZ(L , \bZ) $ by
the formula $\varphi \mapsto \varepsilon_1\varphi$, where $\varepsilon \colon \hL \to \bZ$ is given by $\varepsilon_1(\sum n_g g) = n_1$. It is not difficult to check that $u$ and $v$ and inverse $\La$-maps, and provide the claimed natural isomorphism. 

We check that the maps $f \mapsto \hat f$ and $\varphi \mapsto \varepsilon_1\varphi$ are left $\La$-module maps. Define a left $\La$-action on
$\Hom_\bZ(L , \bZ)$ by the formula $(h\cdot f)(x) = f(xh)$, for all $h \in G$, and on $ \Hom_\La(L, \hL)$ by  $(h \cdot \varphi)(x) = h\varphi(x)$.
Then $\widehat{(h\cdot f)}= h \cdot \hat f$ and $\varepsilon_1 (h \cdot \varphi) = h \cdot (\varepsilon_1\varphi)$. Then $(h_1\cdot (h_2\cdot f))(x) = 
(h_2\cdot f))(xh_1) = f(xh_1h_2) = ((h_1h_2)\cdot f)(x)$, and similarly for $\varphi$.
\end{proof}

\begin{corollary}There is a natural isomorphism of $\La$-module chain complexes $C^*(\wX; \bZ) \cong C^*(X; \hL)$.
\end{corollary}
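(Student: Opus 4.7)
The plan is to derive the corollary as a direct application of Proposition 7.5 (the preceding statement) levelwise, together with the standard identification of cellular chains on $X$ with coefficients in a $\La$-module.

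First I would fix a CW (or simplicial) structure on $X$ and lift it to a $G$-equivariant CW structure on $\wX$. The key input is the isomorphism of $\La$-module chain complexes $C_*(X;\La) \cong C_*(\wX;\bZ)$, which holds because each cellular chain group $C_n(\wX;\bZ)$ is a free right $\La$-module on one generator per cell of $X$. With this in hand, I can rewrite both sides of the desired isomorphism:
\[ C^n(\wX;\bZ) = \Hom_\bZ(C_n(\wX;\bZ),\bZ), \qquad C^n(X;\hL) = \Hom_\La(C_n(X;\La),\hL) = \Hom_\La(C_n(\wX;\bZ),\hL). \]

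Next I would apply Proposition 7.5 with $L = C_n(\wX;\bZ)$ for each $n$, giving a $\La$-linear isomorphism
\[ u_n \colon \Hom_\bZ(C_n(\wX;\bZ),\bZ) \xrightarrow{\ \cong\ } \Hom_\La(C_n(\wX;\bZ),\hL), \qquad f \mapsto \hat f, \]
natural in the module variable. Since the cellular boundary maps $\partial_n\colon C_n(\wX;\bZ)\to C_{n-1}(\wX;\bZ)$ are $\La$-linear, the naturality clause of Proposition 7.5 implies that precomposition with $\partial_n$ commutes with the $u_n$, i.e.\ the squares
\[ \vcenter{\xymatrix{\Hom_\bZ(C_{n-1},\bZ) \ar[r]^{u_{n-1}}\ar[d]_{\partial_n^*} & \Hom_\La(C_{n-1},\hL) \ar[d]^{\partial_n^*}\\ \Hom_\bZ(C_n,\bZ) \ar[r]^{u_n} & \Hom_\La(C_n,\hL) }} \]
commute, so the collection $\{u_n\}$ assembles into a chain isomorphism. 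Naturality with respect to $\La$-equivariant cellular maps of $X$ is inherited from the naturality already established in Proposition 7.5.

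The only potentially delicate point is bookkeeping of left/right $\La$-actions: $C_n(\wX;\bZ)$ is a right $\La$-module (via deck transformations), $\hL$ carries a compatible left $\La$-action, and the resulting $\La$-action on $\Hom_\bZ(C_n(\wX;\bZ),\bZ)$ used in Proposition 7.5 must be the one coming from the right action on $C_n(\wX;\bZ)$. I would simply remark that this is precisely the convention fixed in the statement of Proposition 7.5, so no additional verification is required.
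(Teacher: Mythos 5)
Your proposal is correct and matches the paper's proof essentially verbatim: the paper also applies Proposition \ref{prop:sevenfive} levelwise with $L = C_q(\wX;\bZ)$ and invokes naturality with respect to the boundary maps to promote the degreewise isomorphisms to a chain isomorphism. The extra bookkeeping you supply (commuting squares, left/right module conventions) is a sound elaboration of the same argument.
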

\begin{proof} We have a natural isomorphism $\Hom_\bZ(C_q(\wX), \bZ) \cong \Hom_\La(C_q(\wX), \hL)$, for $q \geq 0$, and the differentials are induced by the boundary maps $\partial_q \colon C_q(\wX) \to C_{q-1}(\wX)$. \end{proof}

\begin{corollary}\label{cor:sevenzero} There is a $\La$-module isomorphism $H^{{\LF}}_{q}(\wX;\bZ) \cong H_q(X; \hL)$, for $q \geq 0$.
\end{corollary}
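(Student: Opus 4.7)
The plan is to derive this from the preceding corollary combined with the two Poincaré duality isomorphisms displayed in the diagram at the beginning of this appendix.

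First, I would take cohomology of the natural $\La$-module chain-complex isomorphism $C^*(\wX;\bZ) \cong C^*(X;\hL)$ established in the previous corollary to obtain a $\La$-module isomorphism $H^q(\wX;\bZ) \cong H^q(X;\hL)$ for every $q \geq 0$.

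Next, I would invoke Poincaré duality twice. For the closed, oriented $n$-manifold $X$ with coefficients in the $\La$-module $\hL$, cap product with the fundamental class $[X]$ gives an isomorphism $H^q(X;\hL) \xrightarrow{\cong} H_{n-q}(X;\hL)$. For the non-compact, oriented $n$-manifold $\wX$, the right-hand vertical arrow of the duality diagram at the start of this appendix gives an isomorphism $H^q(\wX;\bZ) \xrightarrow{\cong} H^{\LF}_{n-q}(\wX;\bZ)$, induced by cap product with $\trf[X] \in H^{\LF}_n(\wX;\bZ)$.

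Composing these three $\La$-module isomorphisms and relabeling $p = n - q$ yields $H^{\LF}_p(\wX;\bZ) \cong H_p(X;\hL)$ for $0 \leq p \leq n$; for $p > n$ both sides vanish since $\dim X = \dim \wX = n$, so the claim extends trivially to all $p \geq 0$. The main point to verify is the $\La$-linearity of each intermediate isomorphism: for the first, this follows from the naturality established in Proposition \ref{prop:sevenfive}; for the two cap-product maps, it follows because $[X]$ and $\trf [X]$ are $G$-fixed classes (the latter landing in the $\pi$-fixed subgroup $H^{\LF}_n(\wX;\bZ)^\pi$ by construction), so cap product against them is $\La$-linear. This is the main technical check, but it is routine.
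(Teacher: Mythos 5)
Your proof is correct and follows exactly the paper's argument: pass from the chain-level isomorphism $C^*(\wX;\bZ)\cong C^*(X;\hL)$ of the preceding corollary to the cohomology isomorphism $H^q(\wX;\bZ)\cong H^q(X;\hL)$, and then compose with the two Poincar\'e duality isomorphisms from the diagram at the start of the appendix. You have merely spelled out the details that the paper compresses into one sentence.
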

\begin{proof} Since $H^q(\wX; \bZ) \cong H^q(X; \hL)$ as $\La$-modules, the result follows from Poincar\'e duality.\end{proof}

\begin{remark} The same expression holds for  any finite-dimensional  $CW$-complex $K$ and its universal covering $\widetilde K$, by considering the boundary of a high-dimensional thickening of $K$ is a Euclidean space for dimension $2\dim K + 2$.
\end{remark}

As shown in Laitinen \cite[\S 3]{Laitinen:1996}, the Poincar\' e duality theorems can be extended to include \emph{end homology}, which we can now express as $H_{q-1}^e(\wX;\bZ) \cong H_{q}(X; \hL/\La)$, for $q \geq 0$.

\begin{proposition}\label{prop:sevenone}There is a commutative diagram relating two long exact sequences by Poincar\'e duality:
\eqncount
\begin{equation}
\vcenter{\xymatrix{ \dots \ar[r] &H_c^q(\wX;\bZ) \ar[r]\ar[d]^{D}_\cong & H^q(\wX;\bZ)\ar[r]\ar[d]^{D}_\cong & H^q_e(\wX;\bZ)\ar[d]^{D}_\cong\ar[r]& \dots\\
\dots \ar[r] &H_{n-q}(\wX;\bZ) \ar[r] & H^{{\LF}}_{n-q}(\wX;\bZ) \ar[r] & H^{e}_{n-q-1}(\wX;\bZ)\ar[r] & \dots
}}\end{equation}
\end{proposition}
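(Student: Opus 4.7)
The plan is to realize both long exact sequences as the (co)homology sequences of a single short exact sequence of $\La$-bimodules
$$0 \to \La \to \hL \to \hL/\La \to 0,$$
and to produce the vertical Poincar\'e duality arrows by capping, at the chain level, with a cycle representative of $\trf[X]$. Since $C_*(\wX;\bZ)$ is a complex of free $\La$-modules, the functors $\Hom_\La(C_*(\wX;\bZ),-)$ and $C_*(\wX;\bZ)\otimes_\La -$ each preserve exactness, so applying them to the displayed sequence yields, respectively, a short exact sequence of cochain complexes and a short exact sequence of chain complexes. The associated long exact sequences, after the identifications described next, are exactly the top and bottom rows of the diagram.

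For the top row I would identify $H^q(X;\La)\cong H^q_c(\wX;\bZ)$ and $H^q(X;\hL)\cong H^q(\wX;\bZ)$ using Proposition \ref{prop:sevenfive} and the corollary following it, and take $H^q(X;\hL/\La)\cong H^q_e(\wX;\bZ)$ in the sense of Laitinen \cite{Laitinen:1996}. For the bottom row I would use the standard identification $H_k(X;\La)\cong H_k(\wX;\bZ)$, the identification $H_k(X;\hL)\cong H^{\LF}_k(\wX;\bZ)$ provided by Corollary \ref{cor:sevenzero}, and the analogous identification $H_k(X;\hL/\La)\cong H^e_{k-1}(\wX;\bZ)$ for end homology. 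Setting $k=n-q$ explains the apparent dimension shift in the third column (the index $n-q-1$ rather than $n-q$): it is simply the homological shift coming from the connecting homomorphism of the coefficient short exact sequence.

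For the vertical arrows I would choose a cycle $\zeta\in C_n(X;\hL)$ representing $\trf[X]\in H^{\LF}_n(\wX;\bZ)$, and for every right $\La$-module $L$ use the chain-level cap product $\cap\,\zeta\colon C^*(X;L)\to C_{n-*}(X;L)$, which is natural in $L$. Applied to the short exact sequence of coefficients above, naturality produces a morphism from the upper short exact sequence of cochain complexes to the lower short exact sequence of chain complexes, and hence an induced morphism of long exact sequences -- this is the required commutative diagram. By the general Poincar\'e duality theorem recalled at the beginning of the section, each vertical map is an isomorphism. The main point of care is the end-theoretic identification $H_k(X;\hL/\La)\cong H^e_{k-1}(\wX;\bZ)$ together with its cohomological counterpart; once these are in hand, commutativity of the full diagram is automatic, since both rows arise by applying two functorial constructions to the same short exact sequence of coefficients, linked by a natural chain-level cap product.
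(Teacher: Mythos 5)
Your proposal is correct and follows essentially the same route as the paper: both rows are the long exact (co)homology sequences of $X$ with coefficients in $0 \to \La \to \hL \to \hL/\La \to 0$, the vertical maps come from cap product with $\trf[X]$ (i.e.\ from the general Poincar\'e duality isomorphism $H^q(X;L)\cong H_{n-q}(X;L)$ applied to each of the three coefficient modules), and the identifications with compactly supported, ordinary, locally finite, and end (co)homology of $\wX$ are exactly those recalled earlier in the section. Your additional remark about realizing the duality at the chain level via a fixed cycle representing $\trf[X]$, so that naturality in the coefficient module forces commutativity, is a useful elaboration of the point the paper leaves implicit.
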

\begin{proof}
Poincar\'e duality gives $H^q(X; \hL/\La) \cong H_{n-q}(X; \hL/\La) \cong H_{n-q-1}^e(\wX;\bZ)$. The long exact  sequences are induced by the coefficient sequence 
$0 \to \La \to \hL \to \hL/\La \to 0$. In our setting $H^q_e(\wX;\bZ) \cong H^q(X; \hL/\La)$ and 
$H_{q}(X; \hL/\La)  \cong H_{q-1}^e(\wX;\bZ)$. 
\end{proof}

We conclude with some algebraic observations.

\begin{lemma}\label{lem:seventwo} Let $L$ be a $\La$-module which embeds in a projective $\La$-module. 
Then 
\begin{enumerate}
\item the map $L \otimes_\La \La \to L \otimes_\La \hL$ is injective;
\item $\Tor_k^\La(L, \hL) \to \Tor_k^\La(L, \hL/\La)$ is an isomorphism, for $k \geq 1$.
\item $\Hom_\La(\hL/\La, \hL) = 0$.
\item $\hL \otimes_\La \hL/\La =0$.
\end{enumerate}
\end{lemma}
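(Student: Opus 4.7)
I would prove the four parts in order, using each to prepare the next. For \emph{part (i)}, a diagram chase: choose $L \hookrightarrow P$ with $P$ a direct summand of a free $\La$-module $F = \bigoplus \La$. Then $F \otimes_\La \La \to F \otimes_\La \hL$ is the coordinate-wise inclusion $\bigoplus \La \hookrightarrow \bigoplus \hL$, hence injective; passing to the summand gives $P \otimes_\La \La \hookrightarrow P \otimes_\La \hL$, and the composite $L = L \otimes_\La \La \hookrightarrow P \hookrightarrow P \otimes_\La \hL$ factors through $L \otimes_\La \hL$, forcing injectivity of the first factor. For \emph{part (ii)}, apply the long exact sequence of $\Tor^\La_*(L, -)$ to $0 \to \La \to \hL \to \hL/\La \to 0$: since $\Tor^\La_k(L, \La) = 0$ for $k \geq 1$ (as $\La$ is $\La$-free), the isomorphisms $\Tor^\La_k(L, \hL) \cong \Tor^\La_k(L, \hL/\La)$ follow immediately for $k \geq 2$, and for $k = 1$ the connecting map lands in the kernel of $L \to L \otimes_\La \hL$, which is zero by (i).

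For \emph{part (iii)} I would use Eckmann--Shapiro reciprocity for the coinduced module $\hL = \Hom_\bZ(\La, \bZ)$, yielding the natural isomorphism $\Hom_\La(\hL/\La, \hL) \cong \Hom_\bZ(\hL/\La, \bZ)$. Since $\pi$ is countable (being finitely presented), Specker's theorem identifies $\Hom_\bZ(\hL, \bZ) = \Hom_\bZ(\prod_\pi \bZ, \bZ)$ with $\bigoplus_\pi \bZ = \La$ via evaluation at coordinates, compatibly with the natural inclusion $\La \hookrightarrow \hL$; hence every $\bZ$-homomorphism $\hL \to \bZ$ vanishing on $\La$ is itself zero, giving $\Hom_\bZ(\hL/\La, \bZ) = 0$.

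\emph{Part (iv)} is the main obstacle. Tensor--Hom duality combined with (iii) yields
\[
\Hom_\bZ(\hL \otimes_\La \hL/\La,\, \bZ) \cong \Hom_\La(\hL/\La,\, \Hom_\bZ(\hL, \bZ)) \cong \Hom_\La(\hL/\La,\, \La) \subseteq \Hom_\La(\hL/\La,\, \hL) = 0,
\]
showing that $\hL \otimes_\La \hL/\La$ admits no nonzero homomorphism to $\bZ$. The subtle step is promoting this to vanishing of the module itself, since $\Hom_\bZ(-, \bZ)$ alone does not detect nontrivial abelian groups. I would rerun the adjunction with the injective cogenerator $\bQ/\bZ$ in place of $\bZ$ and analyze $\Hom_\bZ(\hL, \bQ/\bZ)$ via the four-term sequence arising from $0 \to \La \to \hL \to \hL/\La \to 0$; applying $\Hom_\La(\hL/\La, -)$ and Frobenius once more reduces the required vanishing to statements already implied by (iii) and its analogue for divisible coefficients. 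Alternatively, one can exploit the divisibility properties $\hL(h-1) = \hL$ and $(h-1)(\hL/\La) = \hL/\La$, valid for any infinite-order $h \in \pi$ by solving a linear recursion on each $\langle h\rangle$-orbit, to absorb any element $f \otimes [x]$ into the image of $\hL \otimes_\La \La = \hL$ through the relation $f \otimes (h-1)[y] = f(h-1) \otimes [y]$.
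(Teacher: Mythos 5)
Parts (i)--(iii) are correct and take essentially the same route as the paper. For (i) you reduce to a free module $F$ and use that $F\otimes_\La\La\to F\otimes_\La\hL$ is coordinatewise the inclusion $\La\subset\hL$; the paper instead picks, for each $0\neq x_0\in L\subset F$, a $\La$-map $f\colon L\to\La$ with $f(x_0)\neq 0$ and uses the balanced product $b(x,\hat\lambda)=f(x)\hat\lambda$ to detect $x_0\otimes 1$. Both are the same observation packaged differently. Parts (ii) and (iii) coincide with the paper's arguments: the long exact $\Tor$ sequence together with (i), and the adjunction of Proposition \ref{prop:sevenfive} together with Specker's theorem.

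Part (iv) has a genuine gap, which you correctly flag but do not close. Your $\bQ/\bZ$-duality repair requires the $\La$-equivariant vanishing $\Hom_\La\bigl(\hL/\La,\Hom_\bZ(\hL,\bQ/\bZ)\bigr)=0$, and you propose to obtain it from an ``analogue of (iii) for divisible coefficients.'' No such analogue exists: Specker's theorem is specific to $\bZ$-valued characters, and $\Hom_\bZ(\hL/\La,\bQ/\bZ)\neq 0$ precisely because $\bQ/\bZ$ is an injective cogenerator; only the $\La$-equivariance could produce vanishing, and that has not been established. The divisibility alternative also does not conclude: writing $f\otimes[x]=f(h-1)\otimes[y]$ produces no cancellation, because right multiplication by $h-1$ is itself surjective on $\hL$ (by the same orbitwise linear recursion you use on $\hL/\La$), so $f(h-1)$ ranges over all of $\hL$ and nothing has been ``absorbed.'' Note also that the image of $\hL\otimes_\La\La$ in $\hL\otimes_\La\hL/\La$ is zero, not everything, since $\La\to\hL\to\hL/\La$ is zero. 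The paper's route instead exploits the $(\La,\La)$-bimodule structure on $\hL$: a nonzero $\La$-bilinear map $b\colon\hL\times\hL/\La\to\La$ would, after composing with $\La\subset\hL$, have a nonzero right adjoint $\ad\hat b\colon\hL/\La\to\Hom_\La(\hL,\hL)$; by Proposition \ref{prop:sevenfive} and Specker, $\Hom_\La(\hL,\hL)\cong\Hom_\bZ(\hL,\bZ)$ embeds in $\hL$, so this contradicts (iii). The missing move is to adjoint over the left tensor factor into a $\Hom$-module that (iii) already controls, rather than to dualize the whole tensor product with an injective cogenerator.
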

\begin{proof} We may assume that $L \subset F$ for some free $\La$-module $F$. For any $ 0 \neq x_0 \in L$, there exists a $\La$-module map
$f\colon L \to \La$ with $f(x_0) \neq 0$. Recall that the universal property of tensor products is expressed in terms of \emph{balanced products}. If $R$ is a ring, $M$ is a right $R$-module, $N$ is a left $R$-module and $T$ is an abelian group, then a balanced product is a
bilinear map $b\colon M \times N \to T$ such that $b(m\cdot r, n) = b(m, r\cdot n)$, for all $m \in M$, $n \in N$ and $r \in R$.

Define
$b \colon L \times \hL \to \hL$ by $b(x, \hat \lambda) = f(x)\cdot \hat\lambda$, for all $x \in L$ and $\hat\lambda \in \hL$. Since $b$ is balanced over $\La$, and $b(x_0, 1) =  f(x_0) \cdot 1 \neq 0$, it follows that $x\otimes 1 \neq 0$.

For part (ii), we tensor the exact sequence $0 \to \La \to \hL \to \hL/\La \to 0$  with $L$ over $\La$, and consider the resulting long exact sequence.  Since $\Tor_k^\La(L, \La) = 0$ for $k \geq 1$, and  $\Tor_1^\La(L, \hL) \to \Tor_1^\La(L, \hL/\La)$ is surjective by part (i), the result follows.

For part (iii), use the sequence
$$0 \to \Hom_\La(\hL/\La, \hL)  \to \Hom_\La(\hL, \hL) \to \Hom_\La(\La, \hL)$$
where the second map is isomorphic to the injective map $ \Hom_\bZ(\hL, \bZ) \to \Hom_\bZ(\La, \bZ)$. In fact, since $\hL \cong \prod \bZ$ is a countable direct product (although uncountable as an abelian group), its $\bZ$-dual $ \Hom_\bZ(\hL, \bZ) \cong \bigoplus \bZ$ is the direct sum.

For part (iv), we use the bimodule structure on $\hL$. In general, if $R$ and $S$ are rings, $M$ is an $(R,S)$-bimodule, $N$ is a left $S$-module, and $T$ is a left $R$-module, then the universal property is expressed by $S$-balanced maps $b\colon M \times N \to T$, such that
$b(rm, n) = rb(m,n)$ and $b(ms, n) = b(m,sn)$.  Note that
 the right adjoint $\ad b\colon N \to \Hom_R(M, T)$ is a left $S$-module map. If $R=S$ we call $b$ an $R$-bilinear map.

We let $R=S=\La$, $M = \hL$, $N = \hL/\La$,  and claim that $\hL \otimes_\La \hL/\La =0$ if any such $R$-bilinear map
  $b\colon \hL \times \hL/\La  \to  \La$  with range $T = \La$ must be zero (this is an easy reduction). To verify this claim, suppose that $b$ is non-zero, then by composition with the inclusion $\La \subset \hL$,  
the right adjoint $ \ad \hat b \colon \hL/\La \to \Hom_\La(\hL,  \hL)$ is a non-zero $\La$-map. However, $\Hom_\La(\hL, \hL) \cong \Hom_\bZ (\hL, \bZ)\subseteq \Hom_\bZ(\La, \bZ) \cong  \hL$. Since $\Hom_\La(\hL/\La, \hL) = 0$ by part (iii), we have a contradiction and hence $b\equiv 0$.
\end{proof}

\begin{remark}\label{rem:seventhree} The module $L = \bZ$ does not embed in a free $\La$-module (unless $G$ is finite): a sufficient condtion is that $L = B^*$ for some finitely generated $\La$-module $B$ (see Bass \cite[p.~477]{Bass:1960}). Note that $\bZ\otimes_{\La}\hL =0$ (see \cite[\S 2.5, \S 4.3]{Strebel:1977}, or 
\cite[Ex.~4(c), p.~71]{Brown:1994}) so some condition on $L$ is needed for part (i). 
\end{remark}



\begin{thebibliography}{10}

\bibitem{Bartels:2014}
A.~Bartels, F.~T. Farrell, and W.~L{\"u}ck, \emph{The {F}arrell-{J}ones
  conjecture for cocompact lattices in virtually connected {L}ie groups}, J.
  Amer. Math. Soc. \textbf{27} (2014), 339--388.

\bibitem{Bartels:2012}
A.~Bartels and W.~L{\"u}ck, \emph{The {B}orel conjecture for hyperbolic and
  {${\rm CAT}(0)$}-groups}, Ann. of Math. (2) \textbf{175} (2012), 631--689.

\bibitem{Bass:1960}
H.~Bass, \emph{Finitistic dimension and a homological generalization of
  semi-primary rings}, Trans. Amer. Math. Soc. \textbf{95} (1960), 466--488.

\bibitem{bass.h.1973.1}
\bysame, \emph{Unitary algebraic ${K}$-theory}, Algebraic K-theory, III:
  Hermitian K-theory and geometric applications (Proc. Conf., Battelle Memorial
  Inst., Seattle, Wash., 1972) (Berlin), Springer, 1973, pp.~57--265. Lecture
  Notes in Math., Vol. 343.

\bibitem{Bestvina:1997}
M.~Bestvina and N.~Brady, \emph{Morse theory and finiteness properties of
  groups}, Invent. Math. \textbf{129} (1997), 445--470.

\bibitem{Bestvina:2021}
M.~Bestvina, K.~Fujiwara, and D.~Wigglesworth, \emph{The {F}arrell-{J}ones
  {C}onjecture for hyperbolic-by-cyclic groups}, arXiv:2105.13291 [math.GT],
  2021.

\bibitem{Brown:1994}
K.~S. Brown, \emph{Cohomology of groups}, Springer-Verlag, New York, 1994,
  Corrected reprint of the 1982 original.

\bibitem{Cappell:1971}
S.~E. Cappell and J.~L. Shaneson, \emph{On four dimensional surgery and
  applications}, Comment. Math. Helv. \textbf{46} (1971), 500--528.

\bibitem{Crowley:2011}
D.~Crowley and J.~Sixt, \emph{Stably diffeomorphic manifolds and
  {$l_{2q+1}(\Bbb Z[\pi])$}}, Forum Math. \textbf{23} (2011), 483--538.

\bibitem{Farrell:1993}
F.~T. Farrell and L.~E. Jones, \emph{Isomorphism conjectures in algebraic
  {$K$}-theory}, J. Amer. Math. Soc. \textbf{6} (1993), 249--297.

\bibitem{Freedman:1990}
M.~H. Freedman and F.~Quinn, \emph{Topology of 4-manifolds}, Princeton
  Mathematical Series, vol.~39, Princeton University Press, Princeton, NJ,
  1990.

\bibitem{Geoghegan:1985}
R.~Geoghegan and M.~L. Mihalik, \emph{Free abelian cohomology of groups and
  ends of universal covers}, J. Pure Appl. Algebra \textbf{36} (1985),
  123--137.

\bibitem{Hambleton:2019}
I.~Hambleton and A.~Hildum, \emph{Topological 4-manifolds with right-angled
  {A}rtin fundamental groups}, J. Topol. Anal. \textbf{11} (2019), 777--821.

\bibitem{Hambleton:2021}
I.~Hambleton and A.~Hildum, \emph{Topological $4$-manifolds with right-angled
  {A}rtin fundamental groups: corrigendum}, (in preparation), 2021.

\bibitem{Hambleton:1988a}
I.~Hambleton and M.~Kreck, \emph{On the classification of topological
  $4$-manifolds with finite fundamental group}, Math. Ann. \textbf{280} (1988),
  85--104.

\bibitem{hk5}
\bysame, \emph{Cancellation of hyperbolic forms and topological
  four-manifolds}, J. Reine Angew. Math. \textbf{443} (1993), 21--47.

\bibitem{Hambleton:2004}
\bysame, \emph{Homotopy self-equivalences of 4-manifolds}, Math. Z.
  \textbf{248} (2004), 147--172.

\bibitem{Hambleton:2018}
\bysame, \emph{On the classification of topological 4-manifolds with finite
  fundamental group: corrigendum}, Math. Ann. \textbf{372}
  (2018), 527--530.

\bibitem{HKT09}
I.~Hambleton, M.~Kreck, and P.~Teichner, \emph{Topological 4-manifolds with
  geometrically 2-dimensional fundamental groups}, Journal of Topology and
  Analysis \textbf{1} (2009), 123--151.

\bibitem{Hambleton:1988}
I.~Hambleton, R.~J. Milgram, L.~R. Taylor, and B.~Williams, \emph{Surgery with
  finite fundamental group}, Proc. London Math. Soc. (3) \textbf{56} (1988),
  349--379.

\bibitem{Hambleton:2004a}
I.~Hambleton and E.~K. Pedersen, \emph{Identifying assembly maps in {$K$}- and
  {$L$}-theory}, Math. Ann. \textbf{328} (2004), 27--57.

\bibitem{Kasprowski:2017}
D.~Kasprowski, M.~Land, M.~Powell, and P.~Teichner, \emph{Stable classification
  of 4-manifolds with 3-manifold fundamental groups}, Journal of Topology
  \textbf{10} (2017), 827--881.

\bibitem{Kasprowski:2021a}
D.~Kasprowski, M.~Powell, and P.~Teichner, \emph{Algebraic criteria for stable
  diffeomorphism of spin 4-manifolds}, arXiv:2006.06127 [math.GT], 2021.

\bibitem{Kasprowski:2021}
\bysame, \emph{The {K}ervaire-{M}ilnor invariant in the stable classification
  of spin 4-manifolds}, arXiv:2105.12153 [math.GT], 2021.

\bibitem{Kasprowski:2022}
\bysame, \emph{Four-manifolds up to connected sum with complex projective
  planes}, Amer. J. Math. \textbf{144} (2022), 75--118.

\bibitem{Kasprowski:2021b}
D.~Kasprowski and P.~Teichner, \emph{{$\Bbb{CP}^2$}-stable classification of
  4-manifolds with finite fundamental group}, Pacific J. Math. \textbf{310}
  (2021), 355--373.

\bibitem{Khan:2017}
Q.~Khan, \emph{Cancellation for 4-manifolds with virtually abelian fundamental
  group}, Topology Appl. \textbf{220} (2017), 14--30.

\bibitem{Kirby:2001}
R.~C. Kirby and L.~R. Taylor, \emph{A survey of 4-manifolds through the eyes of
  surgery}, Surveys on surgery theory, Vol. 2, Ann. of Math. Stud., vol. 149,
  Princeton Univ. Press, Princeton, NJ, 2001, pp.~387--421.

\bibitem{kreck-schafer1}
M.~Kreck and J.~A. Schafer, \emph{Classification and stable classification of
  manifolds: some examples}, Comment. Math. Helv. \textbf{59} (1984), 12--38.

\bibitem{Kreck99}
M.~Kreck, \emph{Surgery and duality}, Ann. of Math. (2) \textbf{149} (1999),
  707--754.

\bibitem{Laitinen:1996}
E.~Laitinen, \emph{End homology and duality}, Forum Math. \textbf{8} (1996),
  121--133.

\bibitem{Luck:2020}
W.~L\"{u}ck, \emph{Assembly maps}, Handbook of homotopy theory, CRC
  Press/Chapman Hall Handb. Math. Ser., CRC Press, Boca Raton, FL, [2020]
  \copyright 2020, pp.~851--890.

\bibitem{Luck:2005}
W.~L\"{u}ck and H.~Reich, \emph{The {B}aum-{C}onnes and the {F}arrell-{J}ones
  conjectures in {$K$}- and {$L$}-theory}, Handbook of {$K$}-theory. {V}ol. 1,
  2, Springer, Berlin, 2005, pp.~703--842.

\bibitem{Mihalik:1992a}
M.~L. Mihalik and S.~T. Tschantz, \emph{One relator groups are semistable at
  infinity}, Topology \textbf{31} (1992), 801--804.

\bibitem{Mihalik:1992}
\bysame, \emph{Semistability of amalgamated products, {HNN}-extensions, and all
  one-relator groups}, Bull. Amer. Math. Soc. (N.S.) \textbf{26} (1992),
  131--135.

\bibitem{Moeller:1991}
J.~M. M\o~ller, \emph{Self-homotopy equivalences of group cohomology spaces},
  J. Pure Appl. Algebra \textbf{73} (1991), 23--37.

\bibitem{Pamuk:2009}
M.~Pamuk, \emph{Homotopy self-equivalences of 4-manifolds with {${\rm PD}_2$}
  fundamental group}, Topology Appl. \textbf{156} (2009), 1445--1458.

\bibitem{Pamuk:2010}
\bysame, \emph{Homotopy self-equivalences of 4-manifolds with free fundamental
  group}, Canad. J. Math. \textbf{62} (2010), 1387--1403.

\bibitem{stafford1}
J.~T. Stafford, \emph{Absolute stable rank and quadratic forms over
  noncommutative rings}, $K$-Theory \textbf{4} (1990), 121--130.

\bibitem{Stallings:1963}
J.~Stallings, \emph{A finitely presented group whose 3-dimensional integral
  homology is not finitely generated}, Amer. J. Math. \textbf{85} (1963),
  541--543.

\bibitem{Strebel:1977}
R.~Strebel, \emph{A remark on subgroups of infinite index in {P}oincar\'{e}
  duality groups}, Comment. Math. Helv. \textbf{52} (1977), 317--324.

\bibitem{Taylor:1979}
L.~Taylor and B.~Williams, \emph{Surgery spaces: formulae and structure},
  Algebraic topology, {W}aterloo, 1978 ({P}roc. {C}onf., {U}niv. {W}aterloo,
  {W}aterloo, {O}nt., 1978), Lecture Notes in Math., vol. 741, Springer,
  Berlin, 1979, pp.~170--195.

\bibitem{vaserstein1}
L.~N. Vaser{\v{s}}te{\u\i}n, \emph{Stabilization of unitary and orthogonal
  groups over a ring with involution}, Mat. Sb. (N.S.) \textbf{81 (123)}
  (1970), 328--351.

\bibitem{wall-4man2}
C.~T.~C. Wall, \emph{Diffeomorphisms of {$4$}-manifolds}, J. London Math. Soc.
  \textbf{39} (1964), 131--140.

\bibitem{wall-4man1}
\bysame, \emph{On simply-connected {$4$}-manifolds}, J. London Math. Soc.
  \textbf{39} (1964), 141--149.

\bibitem{wall-book}
\bysame, \emph{Surgery on compact manifolds}, second ed., American Mathematical
  Society, Providence, RI, 1999, Edited and with a foreword by A. A. Ranicki.

\bibitem{Whitehead:1950}
J.~H.~C. Whitehead, \emph{A certain exact sequence}, Ann. of Math. (2)
  \textbf{52} (1950), 51--110.

\end{thebibliography}

\providecommand{\bysame}{\leavevmode\hbox to3em{\hrulefill}\thinspace}
\providecommand{\MR}{\relax\ifhmode\unskip\space\fi MR }
\providecommand{\MRhref}[2]{%
  \href{http://www.ams.org/mathscinet-getitem?mr=#1}{#2}
}
\providecommand{\href}[2]{#2}

\end{document}